\patchcmd{\ttlh@hang}{\parindent\z@}{\parindent\z@\leavevmode}{}{}
\patchcmd{\ttlh@hang}{\noindent}{}{}{}
\titleformat*{\section}{\normalsize\bfseries}
\titleformat*{\subsection}{\normalsize\bfseries}
\titleformat*{\subsubsection}{\normalsize\bfseries}
\titleformat*{\paragraph}{\normalsize\bfseries}
\patchcmd{\thmhead}{(#3)}{#3}{}{} % remove parentheses in optional argument of theorem
\tikzset{>=stealth,line width=1.5pt}
\newcommand{\tikzto}{\mathrel{\tikz[baseline]\draw[->,line width=.5pt] (0ex,0.65ex) -- (3ex,0.65ex);}}
\newcommand{\tikzmapsto}   {\mathrel{\tikz[baseline] \draw[|->,line width=.5pt] (0pt,0.65ex) -- (3ex,0.65ex);}}
\newcommand{\toshort}{\mkern1mu{\tikz[baseline]\draw[->,line width=.5pt] (0ex,0.65ex) -- (2ex,0.65ex);}\mkern1mu}
\newcommand{\tikzleftarrow}{\mathrel{\tikz[baseline]\draw[<-,line width=.5pt] (0ex,0.65ex) -- (3ex,0.65ex);}}
\newcommand{\tikzrightarrow}{\mathrel{\tikz[baseline]\draw[->,line width=.5pt] (0ex,0.65ex) -- (3ex,0.65ex);}}
\newcommand{\smallto}{\mathrel{\tikz[baseline]  \draw[->,line width=.35pt] (0ex,0.42ex) -- (1.5ex,0.42ex);}}
\newcommand{\toarg}[1]{\mathrel{\tikz[baseline] \path[->,line width=.5pt] (0ex,0.65ex) edge node[above=-.5ex, overlay, font=\scriptsize] {$#1$} (3.5ex,.65ex);}}
\newcommand{\leftarrowarg}[1]{\mathrel{\tikz[baseline] \path[<-,line width=.5pt] (0ex,0.65ex) edge node[above=-.5ex, overlay, font=\scriptsize,pos=.63] {$#1$} (3.5ex,.65ex);}}
\newcommand{\hdot}{{\mkern1.5mu\tikz[baseline] \draw [line width=1pt, line cap=round, fill=black] (0,0.5ex) circle(0.12ex);}}
\numberwithin{equation}{section}
\newtheorem{theorem}[equation]{Theorem}
\newtheorem*{theorem*}{Theorem}
\newtheorem{proposition}[equation]{Proposition}
\newtheorem{lemma}[equation]{Lemma}
\newtheorem*{lemma*}{Lemma}
\newtheorem{corollary}[equation]{Corollary}
\newtheorem*{corollary*}{Corollary}
\theoremstyle{definition}
\newtheorem{definition}[equation]{Definition}
\newtheorem{notation}[equation]{Notation}
\theoremstyle{remark}
\newtheorem{remark}[equation]{Remark}
\DeclareMathOperator{\Hom}{Hom}
\DeclareMathOperator{\Tot}{Tot}
\DeclareMathOperator{\Coh}{Coh}
\DeclareMathOperator{\QCoh}{QCoh}
\DeclareMathOperator{\id}{id}
\DeclareMathOperator{\HH}{HH}
\DeclareMathOperator{\MC}{MC}
\renewcommand{\H}{{\operatorname H}}
\numberwithin{equation}{section}
\newcommand{\blank}{-}
\renewcommand{\dotsc}{... \mkern2.5mu}
\renewcommand{\cdots}{\cdot \mkern-4mu \cdot \mkern-4mu \cdot}
\renewcommand{\phi}{\varphi}
\newcommand{\Oplus}{\mathbin{\mkern3.5mu\oplus\mkern3.5mu}}
\newcommand{\extprod}{\Lambda\mkern-1.5mu}
\newcommand{\lmb}{\langle}
\newcommand{\rmb}{\rangle}
\newcommand{\hair}{\ifmmode\mskip1.5mu\else\kern0.05em\fi}
\newcommand{\llrr}[1]{%
\llbracket #1 \rrbracket}
\renewcommand{\epsilon}{\varepsilon}
\renewcommand{\theta}{\vartheta}
\newcommand{\hatotimes}{\mathbin{\widehat{\otimes}}}
\begin{document}
\title{Deformation--obstruction theory for diagrams of algebras and applications to geometry}

\author{Severin Barmeier and Yaël Frégier}

\date{}

\maketitle

\begin{abstract}
Let $X$ be an algebraic variety over an algebraically closed field of characteristic $0$ and let $\Coh (X)$ denote its Abelian category of coherent sheaves. By the work of W.~Lowen and M.~Van den Bergh, it is known that the deformation theory of $\Coh (X)$ as an Abelian category can be seen to be controlled by the Gerstenhaber--Schack complex associated to the restriction of the structure sheaf $\mathcal O_X \vert_{\mathfrak U}$ to a cover of affine open sets. We construct an explicit L$_\infty$ algebra structure on the Gerstenhaber--Schack complex controlling the higher deformation theory of $\mathcal O_X \vert_{\mathfrak U}$ or $\Coh (X)$ in case $X$ can be covered by two acyclic open sets, giving an explicit defor\-mation--ob\-struc\-tion calculus for such deformations. For smooth $X$, such deformations recover the deformation of complex structures and deformation quantizations of $X$ as degenerate cases, as we show by means of concrete examples.

\noindent{\it Mathematics Subject Classification (2010).} 16S80 (14D15 13D10 53D55) \\
{\it Keywords.} deformations of associative algebras, L$_\infty$ algebras, Gerstenhaber--Schack complex
\end{abstract}

\tableofcontents

\section{Introduction}

Noncommutative instantons were first studied over a noncommutative $\mathbb R^4$ by Nekrasov and Schwarz \cite{nekrasovschwarz} and have since attracted a lot of attention in the physical literature. In \cite{barmeiergasparim1,barmeiergasparim2} we study (noncommutative) instantons on four-manifolds with nontrivial topology via complex geometry, by identifying instantons with (framed stable) holomorphic rank~$2$ bundles via a Kobayashi--Hitchin correspondence for the noncompact complex surfaces $Z_k = \Tot \mathcal O_{\mathbb P^1} (-k)$. In particular, viewing an instanton as a locally free (thus coherent) sheaf of $\mathcal O_X$ modules one obtains noncommutative instantons by considering noncommutative deformations of $\mathcal O_X$ as a presheaf, which can also be viewed as deformations of its category $\Coh (X)$ of coherent sheaves {\it as an Abelian category} \cite{dinhvanliulowen,lowenvandenbergh1,lowenvandenbergh2}. Our aim in this paper is to develop general tools to control this deformation theory.

In \cite{gerstenhaberschack} M.~Gerstenhaber and S.\hair D.~Schack developed a deformation theory for {\it diagrams of (associative) algebras}, controlled by the {\it Gerstenhaber--Schack complex} $\mathrm C^\hdot_{\mathrm{GS}}$ (see Definition \ref{definitiongerstenhaberschack}). Here, a {\it diagram of associative $\mathbb k$-algebras} over a small category $\mathfrak U$ is a functor $\mathbb A \colon \mathfrak U^{\mathrm{op}} \tikzto \mathfrak{Alg}_{\mathbb k}$.

Denoting by $\H_{\mathrm{GS}}^\hdot$ the cohomology groups of $\mathrm C^\hdot_{\mathrm{GS}}$, deformations of a diagram $\mathbb A$ of associative algebras are parametrized by $\H^2_{\mathrm{GS}} (\mathbb A)$ with obstructions lying in $\H^3_{\mathrm{GS}} (\mathbb A)$. This deformation theory generalizes the usual deformation theory of algebras due to Gerstenhaber, as a ``single'' algebra $A$ can be thought of as a diagram over the trivial category. Indeed, for a single algebra $A$, the Gerstenhaber--Schack complex $\mathrm C^\hdot_{\mathrm{GS}} (A)$ coincides with the Hochschild complex $\mathrm{CH}^\hdot (A)$, which is known to control the deformation theory of $A$ as an associative algebra.

It is a remarkable fact \cite{gerstenhaberschack} that to a diagram of algebras $\mathbb A$, one can associate a single associative algebra $\mathbb A!$, its {\it diagram algebra}, such that there is an isomorphism of the Hochschild cohomology $\HH^n (\mathbb A!)$ with the Gerstenhaber--Schack cohomology $\H^n_{\mathrm{GS}} (\mathbb A)$. 

Now let $X$ be a Noetherian semi-separated scheme over an algebraically closed field $\mathbb k$ of characteristic $0$, and let $\Coh (X)$ denote its Abelian category of coherent sheaves. Let $\mathfrak U = \{ U_i \}_{i \in I}$ be a semi-separating cover of $X$, {\it i.e.}\ a cover of acyclic open sets which is closed under intersections. We may think of $\mathfrak U$ as a subcategory of $\mathfrak{Open} (X)$, with objects $U_i$ and morphisms given by inclusion of open sets.

Any presheaf of algebras $\mathcal F$ on $X$ gives rise to a diagram of algebras $\mathcal F \vert_{\mathfrak U}$ over $\mathfrak U$ obtained by restriction of $\mathcal F \colon \mathfrak{Open} (X)^{\mathrm{op}} \tikzto \mathfrak{Alg}_{\mathbb k}$ to the subcategory $\mathfrak U \subset \mathfrak{Open} (X)$.

In \cite{lowenvandenbergh1,lowenvandenbergh2} W.\ Lowen and M.\ Van den Bergh developed a deformation theory for (abstract) Abelian categories and showed that for the Abelian category $\Coh (X)$ of coherent sheaves on a Noetherian semi-separated scheme $X$, the complex controlling Abelian deformations of $\Coh (X)$ is isomorphic to the Gerstenhaber--Schack complex for the diagram of algebras $\mathcal O_X \vert_{\mathfrak U}$ in the homotopy category of B$_\infty$ algebras. In particular, the deformation theory of $\Coh (X)$ as an Abelian category can be described by higher structures on the Gerstenhaber--Schack complex, which also controls deformations of the diagram of algebras $\mathcal O_X \vert_{\mathfrak U}$, the restriction of the structure sheaf $\mathcal O_X$ to a semi-separating cover $\mathfrak U$.

Moreover, Dinh Van--Lowen \cite{dinhvanlowen} constructed a homotopy which can be used to transfer the \textsc{dg} Lie algebra structure on the Hochschild complex $\mathrm{CH}^\hdot (\mathbb A!)$ to an L$_\infty$ algebra structure on the Gerstenhaber--Schack complex $\mathrm C_{\mathrm{GS}}^\hdot (\mathbb A)$.

In this paper we give an explicit construction of an L$_\infty$ algebra structure on the Ger\-sten\-haber--Schack complex via a different method and prove that the higher structures describe the higher deformation theory of diagrams of associative algebras over
$\mathfrak U =
\begin{tikzpicture}[baseline]
\draw[line width=.5pt, fill=black]   (0,.65ex) circle(0.25ex);
\draw[line width=.5pt, fill=black] (4ex,.65ex) circle(0.25ex);
\draw[line width=.5pt, fill=black] (8ex,.65ex) circle(0.25ex);
\node[shape=circle,scale=.6](U) at   (0,.65ex) {};
\node[shape=circle,scale=.6](W) at (4ex,.65ex) {};
\node[shape=circle,scale=.6](V) at (8ex,.65ex) {};
\draw[->,line width=.5pt] (W) -- (U);
\draw[->,line width=.5pt] (W) -- (V);
\end{tikzpicture}$. Our approach is based on {\it higher derived brackets} due to Voronov \cite{voronov1,voronov2}, which were also used in Frégier--Zambon \cite{fregierzambon1,fregierzambon2}, where the authors study several ``simultaneous deformation'' problems in algebra and geometry, such as simultaneous deformations of two Lie algebras and a morphism between them, of coisotropic submanifolds in Poisson manifolds, or of Dirac structures in Courant algebroids; these methods were also studied from an operadic point of view in Frégier--Markl--Yau \cite{fregiermarklyau}.

The L$_\infty$ algebra structure on $\mathrm C_{\mathrm{GS}}^\hdot$ allows us to give an explicit description of the defor\-mation--ob\-struc\-tion calculus of the diagram $\mathcal O_X \vert_{\mathfrak U}$ in case $X$ can be covered by two acyclic open sets.

In general, deformations of $\Coh (X)$ or of $\mathcal O_X \vert_{\mathfrak U}$ can be thought of as organizing deformations of the complex structure of $X$ and deformation quantizations of the structure sheaf $\mathcal O_X$ into one consistent whole.

The Gerstenhaber--Schack complex of the diagram $\mathcal O_X \vert_{\mathfrak U}$ computes the Hochschild cohomology $\HH^n (X)$ of $X$ \cite{gerstenhaberschack}. In case $X$ is smooth, the Hochschild--Kostant--Rosenberg theorem (see \cite[\S 28]{gerstenhaberschack} and \cite[\S 3]{dinhvanliulowen}) decomposes into a direct sum
\[
\HH^n (X) \simeq \bigoplus_{p+q = n} \H^q (X, \extprod^p \mathcal T_X)
\]
where $\mathcal T_X$ is the tangent bundle of $X$. Deformations of $\Coh (X)$ are parametrized by \cite{lowenvandenbergh2,dinhvanliulowen}
\begin{equation}
\label{hkrdecomposition2}
\H^2_{\mathrm{GS}} (X) \simeq \HH^2 (X) \simeq \H^0 (X, \extprod^2 \mathcal T_X) \oplus \H^1 (X, \mathcal T_X) \oplus \H^2 (X, \mathcal O_X)
\end{equation}
with obstructions lying in
\begin{equation}
\label{hkrdecomposition3}
\H^3_{\mathrm{GS}} (X) \simeq \HH^3 (X) \simeq \H^0 (X, \extprod^3 \mathcal T_X) \oplus \H^1 (X, \extprod^2 \mathcal T_X) \oplus \H^2 (X, \mathcal T_X) \oplus \H^3 (X, \mathcal O_X).
\end{equation}

The summands of (\ref{hkrdecomposition2}) have the following geometric interpretation:
\begin{enumerate}
\item $\H^0 (X, \extprod^2 \mathcal T_X)$ is the space of almost Poisson structures. An almost Poisson structure $\eta$ is a Poisson structure in case its Schouten--Nijenhuis bracket $[\eta, \eta] \in \H^0 (X, \extprod^3 \mathcal T_X)$ vanishes. Such a Poisson structure $\eta$ is seen to parametrize noncommutative deformations of $\mathcal O_X$ in the sense of deformation quantization (Kontsevich \cite{kontsevich1,kontsevich2}) \label{item1}
\item $\H^1 (X, \mathcal T_X)$ parametrizes ``classical'' deformations of the complex structure (after Kodaira--Spencer \cite{kodaira}) \label{item2}
\item $\H^2 (X, \mathcal O_X)$ parametrizes ``twists'' of $\mathcal O_X$ (see \cite{calaquerossi,dinhvanliulowen}).
\end{enumerate}
The summands $\H^0 (X, \extprod^3 \mathcal T_X)$ and $\H^2 (X, \mathcal T_X)$ are seen to be the obstruction spaces for the types \ref{item1} and \ref{item2}, respectively.

The L$_\infty$ algebra structure on the Gerstenhaber--Schack complex $\mathrm C^\hdot_{\mathrm{GS}}$ now controls how these types of deformations interact when considering deformations to higher orders. For example, a variety may admit unobstructed deformations both in the ``classical'' and in the noncommutative sense, which do not extend to a simultaneous deformation. We give such examples in \S \ref{deformationsofcoh} with explicit computations for the noncompact surfaces $Z_k = \Tot \mathcal O_{\mathbb P^1} (-k)$, for $k \geq 1$.

\section{The Gerstenhaber--Schack complex}

Let $\mathbb k$ be a field of characteristic $0$ and write $\Hom = \Hom_{\mathbb k}$ and $\otimes = \otimes_{\mathbb k}$.

\begin{definition}
\label{definitiongerstenhaberschack}
Given a presheaf $\mathcal F$ over a small category $\mathfrak U$, consider the following first quadrant double complex
\[
\mathrm C^{p,q} (\mathcal F) = \prod_{U_0 \smallto \dotsb \smallto U_p} \Hom (\mathcal F (U_p)^{\otimes q}, \mathcal F (U_0))
\]
where the product is taken over all $p$-simplices in the simplicial nerve of $\mathfrak U$. The differentials of $\mathrm C^{p,q}$ are the Hochschild and simplicial differentials
\begin{align*}
d_{\mathrm H} &\colon \mathrm C^{p,q} (\mathcal F) \tikzto \mathrm C^{p,q+1} (\mathcal F) \\
d_\Delta &\colon \mathrm C^{p,q} (\mathcal F) \tikzto \mathrm C^{p+1,q} (\mathcal F).
\end{align*}
(These differentials are described in detail in Appendix \ref{hochschildsimplicial}.)
The {\it Gerstenhaber--Schack complex} is defined as the total complex $\mathrm C^\hdot_{\mathrm{GS}} (\mathcal F) = \Tot \mathrm C^{p,q} (\mathcal F)$ and the {\it Gerstenhaber--Schack differential} is the usual total differential $d_{\mathrm{GS}} = d_{\mathrm H} + (-1)^{q+1} d_\Delta$.
\end{definition}

\begin{remark}
As a category, $\mathfrak U$ contains identity morphisms and its simplicial nerve $\mathrm N (\mathfrak U)$ contains simplices of length ${>}1$. There is a subcomplex, the {\it reduced} Ger\-sten\-haber--Schack complex, consisting of those morphisms which vanish on simplices containing an identity arrow and the inclusion in the Gerstenhaber--Schack complex is a quasi-isomorphism, see \cite[\S 3.4]{dinhvanlowen}. Henceforth we shall work with the reduced Gerstenhaber--Schack complex.
\end{remark}

\begin{remark}
Gerstenhaber--Schack \cite{gerstenhaberschack} found that the total complex of the truncated complex with $q \geq 1$ parametrizes deformations of $\mathcal F$ as a presheaf of algebras.
Dinh Van--Lowen \cite{dinhvanlowen} showed that the full Gerstenhaber--Schack complex parametrizes deformations of a sheaf as a {\it twisted presheaf}, and can also be generalized to describe deformations of {\it prestacks}. (We note that to this end Dinh Van--Lowen defined a more complicated differential on $\mathrm C^\hdot_{\mathrm{GS}}$.\footnote{Their new differential is of the form $d^{p+q} = d_0 + \dotsb + d_{p+q}$, where $d_0 = d_{\mathrm H}$ and $d_1 = d_\Delta$ and the other terms are maps $d_i \colon \mathrm C^{p,q} \tikzto \mathrm C^{p+i,q-i+1}$.} However, as long as $X$ is covered by two acyclic open sets with acyclic intersection, this extra structure does not appear.)

For our applications it thus suffices to only consider the truncated (reduced) Gersten\-haber--Schack complex, which we will also denote by $\mathrm C^\hdot_{\mathrm{GS}}$.
\end{remark}

\subsection{Deformations of algebras and their diagrams}

We briefly recall the deformation theory of a ``single'' associative $\mathbb k$-algebra $A = (A, \mu)$ first studied by Gerstenhaber \cite{gerstenhaber1,gerstenhaber2}. Its multiplication $\mu \colon A \otimes A \tikzto A$ can be viewed as an element of degree two\footnote{To view $\mathrm{CH}^\hdot (A)$ as a \textsc{dg} Lie algebra, one shifts the degree by $1$, {\it cf.}\ Proposition \ref{associative}.} in the Hochschild complex $\mathrm{CH}^\hdot (A)$. Let $A \llrr{\epsilon} = A \hatotimes \mathbb k \llrr{\epsilon}$ and consider the natural extension of $\mu$ to $A \llrr{\epsilon}$, given by
\[
\mu \Big( \textstyle\sum_{i \geq 0} a_i \epsilon^i, \textstyle\sum_{j \geq 0} b_j \epsilon^j \Big) = \sum_{k \geq 0} \Big( \textstyle\sum_{i + j = k} \mu (a_i, b_j) \Big) \epsilon^k.
\]

A {\it formal deformation} of $A$ is a collection $(\mu_i)_{i \geq 1}$ of $\mathbb k$-bilinear maps $\mu_i \colon A \otimes A \tikzto A$ such that their extensions to $A \llrr{\epsilon}$ define a $\mathbb k \llrr{\epsilon}$-bilinear associative multiplication $\mu^\epsilon \colon A \llrr{\epsilon} \times A \llrr{\epsilon} \tikzto A \llrr{\epsilon}$ of the form
\[
\mu^\epsilon = \mu + \epsilon \mu_1 + \epsilon^2 \mu_2 + \dotsb
\]

The deformation theory of $A$ can be conveniently described in terms of a graded Lie algebra structure on the Hochschild cochains defined as follows.

\begin{definition}
\label{gerstenhaberbracket}
Given two multilinear maps $f \in \Hom (A^{\otimes m+1}, A)$, $g \in \Hom (A^{\otimes n+1}, A)$
define
\begin{align*}
f \circ_i g &= f (\id^{\otimes i} \otimes \, g \otimes \id^{\otimes m-i})
\intertext{for $0 \leq i \leq m$ and write}
f \circ g &= \sum_{i=0}^m (-1)^{ni} f \circ_i g.
\intertext{The {\it Gerstenhaber bracket} is then defined by}
[f, g] &= f \circ g - (-1)^{mn} g \circ f.
\end{align*}
\end{definition}

\begin{proposition}[\cite{gerstenhaber1}]
\label{associative}
Let $\mu \in \Hom (A \otimes A, A)$. Then
\[
\mu \text{ \rm is associative} \Leftrightarrow [\mu, \mu] = 0.
\]
\end{proposition}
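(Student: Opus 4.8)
The plan is to simply unwind the definitions of Definition~\ref{gerstenhaberbracket} in the special case $f = g = \mu$ and read off the equivalence by evaluating on an arbitrary triple of elements. Since $\mu \in \Hom (A \otimes A, A) = \Hom (A^{\otimes 2}, A)$, both arguments sit in degree $m = n = 1$, so the circle product has only the two summands $i = 0$ and $i = 1$, and the whole computation is elementary.

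First I would record the two insertions. By definition $\mu \circ_0 \mu = \mu (\mu \otimes \id)$ and $\mu \circ_1 \mu = \mu (\id \otimes \mu)$, which on a triple $a \otimes b \otimes c$ evaluate to $\mu (\mu (a,b), c)$ and $\mu (a, \mu (b,c))$ respectively; in the usual product notation these are $(ab)c$ and $a(bc)$. Next I would assemble the bracket, keeping careful track of the two signs. With $n = 1$ the sign $(-1)^{ni}$ reduces to $(-1)^i$, so $\mu \circ \mu = \mu \circ_0 \mu - \mu \circ_1 \mu$; and since $mn = 1$ the antisymmetrization in the definition of $[\blank,\blank]$ reads $[\mu, \mu] = \mu \circ \mu - (-1)\, \mu \circ \mu = 2\, \mu \circ \mu$. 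Evaluating on $a \otimes b \otimes c$ therefore gives $[\mu,\mu](a,b,c) = 2\bigl( (ab)c - a(bc) \bigr)$, which is twice the associator of $\mu$.

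It then remains only to observe that this cochain vanishes identically if and only if $(ab)c = a(bc)$ for all $a, b, c \in A$, that is, exactly when $\mu$ is associative; the overall factor $2$ is harmless since $\mathbb K$ has characteristic $0$. I do not expect any genuine obstacle here: the statement is essentially a reformulation of associativity, and the only point requiring care is the bookkeeping of the two signs $(-1)^{ni}$ and $(-1)^{mn}$ appearing in Definition~\ref{gerstenhaberbracket}. In this lowest-degree situation both signs collapse, so that $[\mu,\mu]$ is simply twice the associator and the equivalence is immediate.
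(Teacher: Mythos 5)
Your computation is correct and follows the paper's sign conventions exactly: with $m=n=1$ one gets $\mu\circ\mu=\mu\circ_0\mu-\mu\circ_1\mu$ and $[\mu,\mu]=2\,\mu\circ\mu$, which evaluates to twice the associator, and the factor $2$ is indeed harmless since $\mathbb K$ has characteristic $0$. The paper states this classical fact of Gerstenhaber without proof, and your direct verification is precisely the standard argument it implicitly relies on.
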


\begin{corollary}
$\mu^\epsilon$ is an associative deformation of $\mu$ if and only if $\widetilde \mu = \mu^\epsilon - \mu$ satisfies the Maurer--Cartan equation
\begin{equation}
\label{maurercartanassociative}
d_{\mathrm H} \widetilde \mu + \tfrac12 [\widetilde \mu, \widetilde \mu] = 0.
\end{equation}
\end{corollary}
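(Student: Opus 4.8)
The plan is to unwind the definitions and show that the Maurer--Cartan equation \eqref{maurercartanassociative} for $\widetilde\mu = \mu^\epsilon - \mu$ is equivalent to the associativity of $\mu^\epsilon$, which by Proposition~\ref{associative} is equivalent to $[\mu^\epsilon,\mu^\epsilon]=0$. First I would expand $[\mu^\epsilon,\mu^\epsilon]$ using bilinearity of the Gerstenhaber bracket and the decomposition $\mu^\epsilon = \mu + \widetilde\mu$, obtaining
\[
[\mu^\epsilon,\mu^\epsilon] = [\mu,\mu] + 2[\mu,\widetilde\mu] + [\widetilde\mu,\widetilde\mu].
\]
Since $\mu$ itself is associative, Proposition~\ref{associative} gives $[\mu,\mu]=0$, so this reduces to $2[\mu,\widetilde\mu] + [\widetilde\mu,\widetilde\mu]$.

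The key step is then to identify the cross term $[\mu,\widetilde\mu]$ with the Hochschild differential $d_{\mathrm H}\widetilde\mu$. This is the standard fact that the Hochschild coboundary on the cochains is given (up to a sign convention) by the Gerstenhaber bracket with the multiplication $\mu$, i.e.\ $d_{\mathrm H} f = [\mu, f]$ (or $\pm[f,\mu]$ according to the degree and sign conventions fixed in Definition~\ref{gerstenhaberbracket}). I would verify this by writing out $\mu\circ f$ and $f\circ\mu$ explicitly on an element of $A^{\otimes(q+1)}$ and matching the resulting alternating sum of insertions term-by-term with the Hochschild differential described in Appendix~\ref{hochschildsimplicial}; the boundary insertions of $\mu$ produce the two outer terms $\mu(a_0, f(\dotsc))$ and $\mu(f(\dotsc), a_{q+1})$, while the inner insertions $f\circ_i\mu$ produce the alternating inner terms.

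Substituting $[\mu,\widetilde\mu] = d_{\mathrm H}\widetilde\mu$ yields
\[
[\mu^\epsilon,\mu^\epsilon] = 2\hspace{1pt}d_{\mathrm H}\widetilde\mu + [\widetilde\mu,\widetilde\mu],
\]
so that $[\mu^\epsilon,\mu^\epsilon]=0$ if and only if $d_{\mathrm H}\widetilde\mu + \tfrac12[\widetilde\mu,\widetilde\mu]=0$, which is precisely \eqref{maurercartanassociative}. Combining this with Proposition~\ref{associative} applied to $\mu^\epsilon$ completes the proof.

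I expect the main obstacle to be purely bookkeeping: tracking the signs in the Gerstenhaber bracket of Definition~\ref{gerstenhaberbracket} and matching them against the sign conventions of the Hochschild differential so that the factor of $2$ and the coefficient $\tfrac12$ come out consistently. Once the relation $d_{\mathrm H}f = [\mu,f]$ is pinned down with the correct signs, the rest is a one-line algebraic manipulation; there is no genuine analytic or structural difficulty, and the degree shift mentioned in the footnote to Proposition~\ref{associative} must be respected when asserting that $\widetilde\mu$ has the correct parity for the bracket to be symmetric.
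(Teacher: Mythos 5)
Your proposal is correct and follows exactly the route the paper intends: the paper gives no explicit proof but states immediately after the corollary that $d_{\mathrm H} = [\mu, \blank]$, which is precisely the identification you make, and the rest is the standard bilinear expansion of $[\mu + \widetilde\mu, \mu + \widetilde\mu]$ using $[\mu,\mu]=0$ and the symmetry of the bracket on (shifted) degree-one elements. The sign bookkeeping you flag is the only point of care, and you handle it appropriately.
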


Here $d_{\mathrm H} = [\mu, \blank ]$ together with the Gerstenhaber bracket $[\blank {,} \blank]$ define a \textsc{dg} Lie algebra structure on the Hochschild cochains. In this sense deformations of an associative algebra are governed by a \textsc{dg} Lie algebra. In particular, one can construct a deformation of $A$ term by term using (\ref{maurercartanassociative}).

In the following, we wish to find a similar structure for deformations of more general diagrams.

Now consider a diagram $\mathbb A$ over $\mathfrak U = \big( U \tikzleftarrow W \tikzrightarrow V \big)$ and let
\begin{itemize}
\item $M_0 = (\mu_0, \nu_0, \xi_0) \in \mathrm C^{0,2}$ be the multiplications on $\mathbb A (U), \mathbb A (V), \mathbb A (W)$, respectively,
\item $\Phi_0 = (\phi_0, \psi_0) \in \mathrm C^{1,1}$ be the morphisms $\mathbb A (W {\toshort} U)$, $\mathbb A (W {\toshort} V)$, respectively.
\end{itemize}
(Note that there are no ``twists'' since $\mathrm C^{2,0}$ is zero in the reduced Gerstenhaber--Schack complex.)

A {\it formal deformation} of $\mathbb A$ is a diagram $\mathbb A \llrr{\epsilon}$ such that for each object $U \in \mathfrak U$, $\mathbb A \llrr{\epsilon} (U) = \mathbb A (U) \hatotimes \mathbb k \llrr{\epsilon}$ is a deformation of $\mathbb A (U)$ with multiplication $\mu^\epsilon = \mu + \epsilon \mu_1 + \epsilon^2 \mu_2 + \dotsb$ and for each morphism $W \tikzto U$ in $\mathfrak U$, $\mathbb A \llrr{\epsilon} (W \toshort U) = \phi + \epsilon \phi_1 + \epsilon^2 \phi_2 + \dotsb$ is a deformation of the morphism $\phi = \mathbb A (W \toshort U)$.

\section{L$_\infty$ algebras via higher derived brackets}

An {\it L$_\infty$ algebra}\footnote{L$_\infty$ algebras are also called {\it strongly homotopy} (or \textsc{sh}) Lie algebras} is a graded vector space together with a collection of $n$-ary ``brackets'' satisfying graded anti-symmetry and generalized Jacobi identities.

We find it convenient to shift the grading and work with what one may call an L$_\infty [1]$ algebra. The suspension of an L$_\infty [1]$ algebra is again an (ordinary) L$_\infty$ algebra.

\begin{definition}
\label{linfinityalgebra}
An {\it L$_\infty [1]$ algebra} $(\mathfrak g, \{ m_n \}_{n \geq 0})$ is a graded $\mathbb k$-vector space $\mathfrak g = \prod_{m \in \mathbb Z} \mathfrak g^m$ together with a collection of multilinear maps $m_n \colon \mathfrak g^{\otimes n} \tikzto \mathfrak g$ of degree $1$ satisfying
\begin{enumerate}
\item $m_n (x_{s (1)}, \dotsc, x_{s (n)}) = \epsilon (s) \, m_n (x_1, \dotsc, x_n)$ for any $s \in \mathfrak S_n$
{\vspace{.2em}\leavevmode\unskip\nobreak\hfil\penalty50\hskip-1em
  \hbox{}\nobreak\hfil{\it (graded anti-symmetry)\vspace{.5ex}}%
  \parfillskip=0pt \finalhyphendemerits=0 \endgraf}
\item $\displaystyle\sum_{\substack{i+j=n+1 \\ i,j\geq 1}} \sum_{s \in \mathfrak S_{i,n-i}} \epsilon (s) \, m_j (m_i (x_{s (1)}, \dotsc, x_{s (i)}), x_{s (i+1)}, \dotsc, x_{s (n)}) = 0$
{\vspace{-1.75em}\leavevmode\unskip\nobreak\hfil\penalty50\hskip-1em
  \hbox{}\nobreak\hfil{\it (generalized Jacobi identity)\vspace{1ex}}%
  \parfillskip=0pt \finalhyphendemerits=0 \endgraf}
\end{enumerate}
for homogeneous elements $x_1, \dotsc, x_n$. Here
\begin{itemize}
\item $\mathfrak S_n$ is the set of permutations of $n$ elements
\item $\mathfrak S_{i,n-i} \subset \mathfrak S_n$ is the set of {\it unshuffles}, {\it i.e.}\ permutations $s \in \mathfrak S_n$ satisfying $s (1) < \dotsb < s (i)$ and $s (i+1) < \dotsb < s (n)$
\item $\epsilon (s)$ is the {\it Koszul sign}\footnote{The Koszul sign of a transposition of two elements $x_i, x_j$ is defined by $(-1)^{\lvert x_i \rvert \lvert x_j \rvert}$, where $\lvert x_i \rvert$ denotes the degree of $x_i$. This definition is then extended multiplicatively to an arbitrary permutation using a decomposition into transpositions.} of the permutation $s$, which also depends on the degrees of the $x_i$.
\end{itemize}
We denote the $n$-ary multilinear maps $m_n (\blank, \dotsc {,} \blank )$ by $\lmb \blank , \dotsc {,} \blank \rmb$.
\end{definition}

The first few generalized Jacobi identities, starting at $n = 0$, read
\begin{align*}
\lmb \lmb\rmb \rmb &= 0 \\
\lmb \lmb a \rmb \rmb + \lmb \lmb\rmb, a \rmb &= 0 \\
\lmb \lmb a, b \rmb \rmb + \lmb \lmb a \rmb, b \rmb + (-1)^{\lvert a \rvert \lvert b \rvert} \lmb \lmb b \rmb, a \rmb + \lmb \lmb\rmb, a, b \rmb &= 0
\end{align*}
For $\lmb\rmb = 0$\footnote{The $0$-ary ``bracket'' $\lmb\rmb$ of an L$_\infty$ algebra is simply a distinguished element. If this element is non-zero, the L$_\infty$ is said to be {\it curved}.
However, in what follows we will not need curved L$_\infty$ algebras.} and writing $\lmb \blank \rmb = d$, these identities start at $n = 1$ and read
\begin{align}
(d \circ d) (a) &= 0 \label{jacobiidentities1} \\
d \lmb a, b \rmb + \lmb d a, b \rmb + (-1)^{\lvert a \rvert \lvert b \rvert} \lmb d b, a \rmb &= 0 \label{jacobiidentities2}\\
\lmb \lmb a, b \rmb, c \rmb + (-1)^{\lvert b \rvert \lvert c \rvert} \lmb \lmb a, c \rmb, b \rmb + (-1)^{\lvert a \rvert \lvert b \rvert + \lvert a \rvert \lvert c \rvert} \lmb \lmb b, c \rmb, a \rmb \hspace{6em} \notag \\ {} + d \lmb a, b, c \rmb + \lmb d a, b, c \rmb + (-1)^{\lvert a \rvert \lvert b \rvert} \lmb d b, a, c \rmb + (-1)^{\lvert a \rvert \lvert c \rvert + \lvert b \rvert \lvert c \rvert} \lmb d c, a, b \rmb &= 0 \label{jacobiidentities3}
\end{align}
{\it i.e.}\ $d$ is a differential (\ref{jacobiidentities1}), $d$ is a derivation with respect to the binary bracket (\ref{jacobiidentities2}) and the usual (shifted) Jacobi identity holds for the binary bracket (first line of (\ref{jacobiidentities3})) {\it up to homotopy correction terms} (second line of (\ref{jacobiidentities3})).

\begin{remark}
\label{vanishinghigherbrackets}
If the $n$-ary brackets are identically zero for $n > 1$, one obtains a cochain complex with differential $d = \lmb \blank \rmb$; if the brackets are zero for $n > 2$, one obtains a \textsc{dg} Lie algebra.
\end{remark}

\begin{definition}
Given an L$_\infty [1]$ algebra $(\mathfrak g, \lmb\rmb, \lmb \blank \rmb, \lmb \blank{,} \blank \rmb, \dotsc)$, a {\it Maurer--Cartan element} is an element $\Phi$ of degree $0$ satisfying the Maurer--Cartan equation
\[
\exp_{\lmb \, \rmb} \Phi = \sum_{n=0}^\infty \frac{\Phi^{\lmb n \rmb}}{n!} = 0
\]
where $\Phi^{\lmb n \rmb} = \lmb \Phi, \dotsc, \Phi \rmb$ is the bracket of $n$ copies of $\Phi$.
Denote by $\MC (\mathfrak g) \subset \mathfrak g^0$ the set of Maurer--Cartan elements of $\mathfrak g$.
\end{definition}

\begin{remark}
When the $n$-ary brackets are zero for $n > 2$ and the L$_\infty$ algebra is in fact a \textsc{dg} Lie algebra with differential $d = \lmb \blank \rmb$ and \textsc{dg} Lie bracket $\lmb \blank {,} \blank \rmb$, the Maurer--Cartan equation for an element $\alpha$ reduces to the familiar form
\[
d \alpha + \tfrac12 \lmb \alpha, \alpha \rmb = 0.
\]
\end{remark}

\subsection{Voronov's higher derived brackets}

From the definition we gave, constructing non-trivial examples of L$_\infty$ algebras might seem like a daunting task, as it involves infinitely many multilinear maps satisfying infinitely many compatibility conditions. Here we follow \cite{fregierzambon1,fregierzambon2} and use a construction due to Voronov \cite{voronov1,voronov2} (see also \cite{kosmannschwarzbach1,kosmannschwarzbach2}), which constructs an L$_\infty$ algebra from simple data.

\begin{definition}
Let $(\mathfrak g, [\blank {,} \blank])$ be a graded Lie algebra and let $\mathfrak a$ be an Abelian subalgebra. Let $P \colon \mathfrak g \tikzto \mathfrak a$ be a projection such that $\ker P \subset \mathfrak g$ is a subalgebra and let $M \in \ker P \cap \mathfrak g^1$ satisfying $[M, M] = 0$. The data $(\mathfrak g, \mathfrak a, P, M)$ which we shall write visually as $M \in \mathfrak g \toarg{P} \mathfrak a$ are called {\it Voronov data}. If instead $M \in \mathfrak g^1 \setminus \ker P$, then $M \in \mathfrak g \toarg{P} \mathfrak a$ are called {\it curved Voronov data}.
\end{definition}

\begin{theorem}[\cite{voronov1}]
\label{voronov}
Let $M \in \mathfrak g \toarg{P} \mathfrak a$ be (curved) Voronov data. Then
\begin{enumerate}
\item $\mathfrak a_M^P = (\mathfrak a, \lmb \rmb, \lmb \blank \rmb, \lmb \blank {,} \blank \rmb, \dotsc)$ is a (curved) L$_\infty [1]$ algebra with multibrackets defined by
\begin{align*}
\lmb \rmb &= P M \\
\lmb a_1, \dotsc, a_n \rmb &= P [...[[M, a_1], a_2], \dotsc, a_n]
\end{align*}
\item $(\mathfrak g[1] \oplus \mathfrak a)_M^P = (\mathfrak g[1] \oplus \mathfrak a, \lmb \rmb, \lmb \blank \rmb, \lmb \blank {,} \blank \rmb, \dotsc)$ is a (curved) L$_\infty [1]$ algebra with multibrackets defined by
\begin{alignat*}{3}
d (x[1] \oplus a) = \lmb x[1] \oplus a \rmb &= -[M, x][1] \oplus P (x + [M, a]) && \in \mathfrak g[1] \oplus \mathfrak a \\
\lmb x[1], y[1] \rmb &= (-1)^{\lvert x \rvert + 1} [x, y][1] && \in \mathfrak g[1] \\
\lmb x[1], a_1, \dotsc, a_n \rmb &= P [...[[\eqmakebox[xM0]{$x$}, a_1], a_2], \dotsc, a_n]  && \in \mathfrak a \\
\lmb a_1, \dotsc, a_n \rmb &= P [...[[\eqmakebox[xM0]{$M$}, a_1], a_2], \dotsc, a_n] && \in \mathfrak a
\end{alignat*}
where $x[1], y[1] \in \mathfrak g[1]$ and $a_1, \dotsc, a_n \in \mathfrak a$. Up to permutation of the entries all other multibrackets are set to vanish.
\end{enumerate}
\end{theorem}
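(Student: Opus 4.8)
The plan is to check the $L_\infty[1]$ axioms of Definition~\ref{linfinityalgebra} directly from the algebraic input, reducing everything to the single hypothesis $[M,M]=0$. Write $D = [M, \blank]$ for the induced degree-$1$ operator and, for $a \in \mathfrak{a}$, write $R_a = [\blank, a]$ for right-bracketing, so that the higher brackets of part~(\textit{i}\hair) read $\langle a_1, \dotsc, a_n \rangle = P\,R_{a_n} \dotsb R_{a_1} M$. Three structural facts will carry the argument. First, $D$ is a differential: $D^2 = \tfrac12[[M,M],\blank] = 0$ by the graded Jacobi identity and $[M,M]=0$. Second, $D$ preserves $\ker P$, since $M \in \ker P$ and $\ker P$ is a subalgebra; consequently $P \circ D$ annihilates $\ker P$. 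Third, for $a, b \in \mathfrak{a}$ the graded Jacobi identity gives $R_b R_a - (-1)^{|a||b|} R_a R_b = R_{[a,b]}$, whose right-hand side vanishes because $\mathfrak{a}$ is Abelian, so the $R_a$ graded-commute exactly. (For genuine, non-curved data $M \in \ker P$, whence the $0$-ary bracket $\langle\rangle = PM = 0$.)

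For part~(\textit{i}\hair) the graded symmetry condition of Definition~\ref{linfinityalgebra} is immediate from the third fact: transposing adjacent arguments $a_i, a_{i+1}$ commutes $R_{a_i}$ and $R_{a_{i+1}}$ up to the Koszul sign, and invariance under adjacent transpositions is enough. The substance lies in the generalized Jacobi identities. The governing principle is that inside any iterated expression an internal projector may be dropped modulo a correction valued in $\ker P$: writing $y = Py + (1-P)y$, the term $(1-P)y$ lies in $\ker P$, and wherever it is next hit by $D$ it stays in $\ker P$ and is killed by the following $P$. The cleanest instance is $d^2 = 0$: since $da = P D a$ and $P D$ kills $\ker P$, one has $P D (P D a) = P D (D a) - P D\big((1-P) D a\big) = P D^2 a - 0 = 0$, using $D^2 = 0$ and $D(\ker P) \subseteq \ker P$. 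I would then run the same bookkeeping on the full unshuffle sum of identity~(\ref{jacobiidentities3}) and its higher analogues: each summand is rewritten so that every bracket either lands in $\ker P$ (and is annihilated after the outer $P$, via the second fact or the subalgebra property $[\ker P, \ker P] \subseteq \ker P$) or is a bracket of two $\mathfrak{a}$-entries (and vanishes by the third fact), leaving only a multiple of $[M,M] = 0$. The main obstacle is precisely this reduction for general $n$: the derivation property of $D$ and the commutation relations above must be applied repeatedly, and keeping track of the Koszul signs attached to the unshuffles so that the surviving terms cancel in pairs is the genuinely delicate part.

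For part~(\textit{ii}\hair) I would verify the same two axioms for $\mathfrak{g}[1] \oplus \mathfrak{a}$, now sorting the arguments of each multibracket by how many lie in the $\mathfrak{g}[1]$ summand. With no $\mathfrak{g}[1]$-entry one recovers exactly the brackets of part~(\textit{i}\hair). With a single entry $x[1]$, the brackets $\langle x[1], a_1, \dotsc, a_n\rangle = P\,R_{a_n}\dotsb R_{a_1} x$ are obtained from those of part~(\textit{i}\hair) by replacing $M$ with $x$, so they are the linearization of the part~(\textit{i}\hair) brackets in the $M$-direction; the $\mathfrak{g}[1]$-component of the differential, $-[M,x][1] = -Dx[1]$, is the intrinsic differential of $\mathfrak{g}$, and $\langle x[1], y[1]\rangle = (-1)^{|x|+1}[x,y][1]$ is its shifted Lie bracket. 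Thus $\mathfrak{g}[1]$ carries the shifted differential graded Lie algebra $(\mathfrak{g}, D, [\blank,\blank])$, while the mixed brackets encode its compatible action on the part~(\textit{i}\hair) structure. The verification of the generalized Jacobi identities splits accordingly: the purely $\mathfrak{g}[1]$ relations are the Jacobi identity and the Leibniz rule for $D$ in $\mathfrak{g}$; the relations with one $\mathfrak{g}[1]$-entry follow by the same $\ker P$-reduction as in part~(\textit{i}\hair) with one factor of $M$ replaced by $x$, together with the identity $P D = P D P$ on $\mathfrak{g}$ (again a consequence of the second fact) needed to match the two components of the differential. As in part~(\textit{i}\hair), I expect the only real difficulty to be the sign and combinatorial bookkeeping in these mixed generalized Jacobi identities.
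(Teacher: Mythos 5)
First, a point of reference: the paper does not actually prove Theorem \ref{voronov} --- it is quoted from Voronov \cite{voronov1} without proof --- so your proposal can only be measured against the standard argument in the literature, not against anything in the text.

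Your three structural facts are correct and are exactly the hypotheses the proof must consume ($D^2=\tfrac12[[M,M],\blank]=0$; $D(\ker P)\subseteq\ker P$ since $M\in\ker P$ and $\ker P$ is a subalgebra; the operators $R_a$ graded-commute because $\mathfrak a$ is Abelian), and the deductions of graded symmetry and of $d\circ d=0$ from them are fine. The genuine gap is in the ``governing principle'' you invoke for the higher Jacobi identities. As stated, a correction term $(1-P)y$ is harmless because ``wherever it is next hit by $D$ it stays in $\ker P$ and is killed by the following $P$'' --- but in the $n$-ary Jacobiator the term one must control is
\[
P\,R_{a_{s(n)}}\dotsb R_{a_{s(i+1)}}\bigl[M,\,(1-P)\,R_{a_{s(i)}}\dotsb R_{a_{s(1)}}M\bigr],
\]
in which $[M,(1-P)(\cdots)]$ does lie in $\ker P$, but is then bracketed with the remaining $a_{s(i+1)},\dotsc,a_{s(n)}$ \emph{before} the outer $P$ is applied; since $[\ker P,\mathfrak a]\not\subseteq\ker P$ (otherwise even the unary bracket $P[M,a]$ would vanish identically), these terms are not individually annihilated. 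They must instead cancel against one another across the different unshuffles and the different splittings $i+j=n+1$, and the mechanism for that cancellation --- in Voronov's argument, an induction resting on the telescoping identity $[[\xi,a],b]=[(1-P)[\xi,a],b]$, valid for $a,b\in\mathfrak a$ and arbitrary $\xi$, which lets one push the projector through the iterated bracket one slot at a time until only $P[\dotsc[[M,M],a_1],\dotsc,a_n]$ survives --- is precisely what is missing. You attribute the remaining difficulty to ``Koszul signs,'' but the issue is not only signs: without identifying which terms pair off and why, the reduction to $[M,M]=0$ is asserted rather than proved, and the same caveat applies to the mixed identities in part (\textit{ii}\hair). The remainder of your treatment of part (\textit{ii}\hair) --- the $\mathfrak g[1]$-sector carrying the shifted \textsc{dg} Lie structure, the one-$x$ brackets being the $M$-linearization of the part (\textit{i}\hair) brackets, and the observation that the cross-term $P(x)$ in the differential squares to zero via $P[M,(1-P)x]=0$ --- is correctly set up.
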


\begin{remark}
Theorem \ref{voronov} remains true if the inner derivation $[M{,} \blank]$ is replaced by an arbitrary derivation, which was shown in \cite{voronov2}.
\end{remark}

The construction of an L$_\infty$ algebra via derived brackets might appear like a specialized class of examples. However, they are in fact very general: any L$_\infty$ algebra can be given via derived brackets \cite[Prop.\ 2.12]{fregierzambon2}.

\begin{notation}
\label{pphi}
Let $P_\Phi = P \circ \exp [\blank, \Phi] \colon \mathfrak g \tikzto \mathfrak a$.
\end{notation}

\begin{remark}
Let $M \in \mathfrak g \toarg{P} \mathfrak a$ be Voronov data. Then $\Phi \in \mathfrak a^0$ is a Maurer--Cartan element of $\mathfrak a^P_M$ if and only if $M \in \ker P_\Phi$.
\end{remark}

\section{An explicit L$_\infty$ algebra structure on the Gerstenhaber--Schack complex}
\label{linfinitygerstenhaberschack}

Now let $\mathbb A$ be a diagram of algebras over $\mathfrak U = \big( U \tikzleftarrow W \tikzrightarrow V \big)$ and let
\begin{flalign*}
&& \mathbb A \eqmakebox[UVW1]{$(U)$} &= (\eqmakebox[UVW2]{$A_U$}, \mu) & \mu &\in \Hom (A_U^{\otimes 2}, A_U) & \\
&& \mathbb A \eqmakebox[UVW1]{$(V)$} &= (\eqmakebox[UVW2]{$A_V$}, \nu) & \nu &\in \Hom (A_V^{\otimes 2}, A_V) & \\
&& \mathbb A \eqmakebox[UVW1]{$(W)$} &= (\eqmakebox[UVW2]{$A_W$}, \xi) & \xi &\in \Hom (A_W^{\otimes 2}, A_W) &
\end{flalign*}
where $\mu, \nu, \xi$ are associative multiplications.

\begin{remark}
The rest of the section also works for diagrams of algebras over
\[
\begin{tikzpicture}[x=2.2em,y=2.2em]
\draw[line width=.5pt, fill=black] (0,0) circle(0.25ex);
\node[shape=circle,scale=.7](C) at (0,0) {};
\foreach \d in {90,121,152,183,214,245,59,28,357,326,295}{%
\draw[line width=.5pt, fill=black] (\d:1) circle(0.25ex);
\node[shape=circle,scale=.6](\d) at (\d:1) {};
\draw[->,line width=.5pt] (C) -- (\d);
\node[font=\scriptsize] at (-90:.95) {$\cdots$};
}
\end{tikzpicture}
\]
However, for our applications we only need to consider the case of two arrows, so we only give details for this case.
\end{remark}

Let $\mathfrak g = \prod_{n \geq 0} \mathfrak g^n$ and $\mathfrak a = \prod_{n \geq 0} \mathfrak a^n$ be defined by
\begin{align}
\mathfrak g^n &= \displaystyle\bigoplus_{U_i \in \mathfrak U} \Hom (\textstyle\bigotimes_{i=0}^n A_{U_i}, A_W) \oplus \Hom (A_U^{\otimes n+1}, A_U) \oplus \Hom (A_V^{\otimes n+1}, A_V) \label{ga}\\
\mathfrak a^n &= \Hom (A_U^{\otimes n+1}, A_W) \oplus \Hom (A_V^{\otimes n+1}, A_W) \notag
\end{align}
and equip $\mathfrak g$ with the Gerstenhaber bracket denoted by $[\blank {,} \blank]$ and defined in Definition \ref{gerstenhaberbracket}. Then $(\mathfrak g, [\blank {,} \blank])$ is a graded Lie algebra and $\mathfrak a \subset \mathfrak g$ an Abelian subalgebra.

Now let $P \colon \mathfrak g \tikzto \mathfrak a$ be the projection given by the decomposition (\ref{ga}) and set $M = \mu \oplus \nu \oplus \xi \in \ker P \cap \mathfrak g^1$. 
Each of $\mu, \nu, \xi$ is only composable with itself and so
\begin{align*}
[M, M] = [\mu, \mu] \oplus [\nu, \nu] \oplus [\xi, \xi] = 0
\end{align*}
since $\mu, \nu, \xi$ are associative ({\it cf.}\ Proposition \ref{associative}).

\begin{lemma}
$M \in \mathfrak g \toarg{P} \mathfrak a$ define Voronov data.
\end{lemma}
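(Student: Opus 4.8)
The plan is to verify directly the four conditions in the definition of Voronov data for the tuple $(\mathfrak g, \mathfrak a, P, M)$: that $(\mathfrak g, [\blank{,}\blank])$ is a graded Lie algebra, that $\mathfrak a$ is an Abelian subalgebra, that $P$ is a projection (immediate from the decomposition (\ref{ga})) whose kernel is a subalgebra, and that $M \in \ker P \cap \mathfrak g^1$ satisfies $[M, M] = 0$. Two of these are already in hand: $\ker P$ is a subalgebra by the preceding Lemma, and the vanishing $[M, M] = 0$ together with $M \in \ker P \cap \mathfrak g^1$ has just been checked. So the proof is essentially an assembly, and I would concentrate on the two conditions that still require an argument, namely that the bracket really equips $\mathfrak g$ with a graded Lie algebra structure and that $\mathfrak a$ is Abelian.

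For the graded Lie algebra structure, the point is that the Gerstenhaber bracket of Definition \ref{gerstenhaberbracket} extends to the ``colored'' setting of $\mathfrak g$, where the various summands carry different source and target types. I would declare an insertion $f \circ_i g$ to be the usual composition when the target space of $g$ agrees with the type of the $i$-th argument slot of $f$, and zero otherwise. With this convention the pre-Lie type identities that prove the classical Gerstenhaber bracket to be graded Lie hold summand by summand, so that graded anti-symmetry and the graded Jacobi identity are inherited componentwise; hence $(\mathfrak g, [\blank{,}\blank])$ is a graded Lie algebra.

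For the Abelian property of $\mathfrak a$, I would argue by type-matching. Every element of $\mathfrak a$ has target $A_W$, while each of its argument slots is typed by $A_U$ or by $A_V$. Since $A_W$ matches neither $A_U$ nor $A_V$, no insertion $f \circ_i g$ of two elements $f, g \in \mathfrak a$ is type-compatible, so every such composition is zero and therefore $[\mathfrak a, \mathfrak a] = 0$; in particular $\mathfrak a$ is closed under the bracket and is an Abelian subalgebra. The same type-matching also explains why $M = \mu \oplus \nu \oplus \xi$ lies outside $\mathfrak a$, hence in $\ker P \cap \mathfrak g^1$, and why the cross terms in $[M, M]$ vanish, leaving $[M, M] = [\mu, \mu] \oplus [\nu, \nu] \oplus [\xi, \xi] = 0$ by associativity and Proposition \ref{associative}. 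Collecting the four verified conditions then gives that $M \in \mathfrak g \toarg{P} \mathfrak a$ are Voronov data. The only real obstacle here is bookkeeping: one must be careful that the colored insertion convention yields a genuinely well-defined bracket on each summand and that the surviving compositions are exactly the type-compatible ones.
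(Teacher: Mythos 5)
Your proposal is correct and follows essentially the same route as the paper, which records this lemma as an immediate assembly of the preceding observations: $(\mathfrak g, [\blank{,}\blank])$ is a graded Lie algebra with $\mathfrak a$ an Abelian subalgebra (which the paper asserts and you justify by the same type-matching reasoning), $\ker P$ is a subalgebra by the preceding lemma, and $[M,M] = [\mu,\mu]\oplus[\nu,\nu]\oplus[\xi,\xi] = 0$ since each of $\mu,\nu,\xi$ is only composable with itself and is associative. Your explicit colored-insertion convention and the remark that the pre-Lie identities persist summand by summand merely spell out details the paper leaves implicit.
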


\begin{proof}
It remains to check that $\ker P$ is a graded Lie subalgebra of $\mathfrak g$. We can decompose $\ker P \cap \mathfrak g^n$ as
\[
(\ker P)^n = K_U^n \oplus K_V^n \oplus K_W^n
\]
where
\begin{alignat*}{5}
& K^n_U & {}={} & \Hom (A_U^{\otimes n+1}, A_U) \\
& K^n_V & {}={} & \Hom (A_V^{\otimes n+1}, A_V) \\
& K^n_W & {}={} & \bigoplus_{\substack{U_i \in \mathfrak U \\ W \in \{ U_i \}}} \Hom (A_{U_0} \otimes \dotsb \otimes A_{U_n}, A_W).
\end{alignat*}
One easily verifies that $\ker P$ is closed under the compositions $\circ_i$, thus also under the bracket.
\end{proof}

\begin{proposition}
\label{equivalent}
Let $\phi \in \Hom (A_U, A_W)$ and $\psi \in \Hom (A_V, A_W)$ and set $\Phi = -\phi \oplus -\psi \in \Hom (A_U, A_W) \oplus \Hom (A_V, A_W) = \mathfrak a^0$. The following are equivalent
\begin{enumerate}
\item \label{tfae1} $\Phi \in \MC (\mathfrak a_M^P)$
\item \label{tfae2} $P_{\Phi} (M) = 0$
\item \label{tfae3} $\phi$ and $\psi$ are morphisms compatible with the algebra structures on $A_U, A_V, A_W$.
\end{enumerate}
\end{proposition}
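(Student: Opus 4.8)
The plan is to prove the equivalences by first settling (\ref{tfae1}) $\Leftrightarrow$ (\ref{tfae2}), which is essentially formal, and then reducing (\ref{tfae2}) $\Leftrightarrow$ (\ref{tfae3}) to an explicit, sign-sensitive expansion of $P_\Phi (M)$. For (\ref{tfae1}) $\Leftrightarrow$ (\ref{tfae2}), I would invoke the Remark characterizing Maurer--Cartan elements of $\mathfrak a_M^P$: an element $\Phi \in \mathfrak a^0$ lies in $\MC (\mathfrak a_M^P)$ if and only if $M \in \ker P_\Phi$, and since $P_\Phi = P \circ \exp [\blank, \Phi]$ this is literally the assertion $P_\Phi (M) = 0$. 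One can make this transparent by writing out $\exp_{\lmb\,\rmb} \Phi$ using the Voronov brackets $\lmb \Phi, \dotsc, \Phi \rmb = P [...[[M, \Phi], \Phi], \dotsc, \Phi]$: term by term this reproduces the expansion of $P \circ \exp [\blank, \Phi]$ evaluated at $M$, so the Maurer--Cartan equation and $P_\Phi (M) = 0$ are the same equation.

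For (\ref{tfae2}) $\Leftrightarrow$ (\ref{tfae3}) I would expand
\[
P_\Phi (M) = \sum_{n \geq 0} \frac{1}{n!} \, P [...[[M, \underbrace{\Phi], \Phi], \dotsc, \Phi}_{n}]
\]
and show that only the orders $n = 1, 2$ survive. The term $n = 0$ is $PM = 0$ since $M \in \ker P$. For $n = 1$ I would compute $[M, \Phi]$ with the Gerstenhaber bracket and check which insertions $\circ_i$ typecheck against the source and target algebras: the only compositions landing in $\mathfrak a$ are the post-compositions $\phi \circ_0 \mu$ and $\psi \circ_0 \nu$, so that $P [M, \Phi] = \phi \circ \mu \oplus \psi \circ \nu$, whereas the insertions $\xi \circ_i \phi$, $\xi \circ_i \psi$ produce ``mixed'' cochains of the form $A_U \otimes A_W \to A_W$ (and the $V$-analogues) that lie in $K_W \subset \ker P$ and are discarded. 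For $n = 2$, the part of $[M, \Phi]$ already valued in $\mathfrak a$ is annihilated by the next bracket because $\mathfrak a$ is Abelian, while the mixed $K_W$-terms $\xi \circ_i \phi$ have their remaining $A_W$-slot filled by $\phi$, contributing (after projection) $\xi \circ (\phi \otimes \phi) \oplus \xi \circ (\psi \otimes \psi)$.

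The truncation at $n = 2$ is the step I expect to demand the most care. After the quadratic order every contribution is either valued in the Abelian subalgebra $\mathfrak a$ — hence killed by any further bracketing with $\Phi \in \mathfrak a$ — or is a genuinely mixed cochain $A_U \otimes A_V \to A_W$ obtained by pairing a $\phi$-slot with a $\psi$-slot; such a cochain has no input slot whose type matches the $A_W$-valued outputs of $\phi$ or $\psi$, so it admits no further nonzero $\circ_i$-composition with $\Phi$, and the series terminates. Assembling the two surviving orders, $P_\Phi (M)$ becomes the pair of expressions built from $\phi \circ \mu$ and $\xi \circ (\phi \otimes \phi)$ (respectively $\psi \circ \nu$ and $\xi \circ (\psi \otimes \psi)$), combined with the sign dictated by the choice $\Phi = -\phi \oplus -\psi$ and the Gerstenhaber signs of Definition \ref{gerstenhaberbracket}; its vanishing is exactly the pair of intertwining relations $\phi \circ \mu = \xi \circ (\phi \otimes \phi)$ and $\psi \circ \nu = \xi \circ (\psi \otimes \psi)$ asserting that $\phi$ and $\psi$ are algebra morphisms compatible with the multiplications, which is condition (\ref{tfae3}). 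The remaining work is pure bookkeeping: verifying the typechecking of each $\circ_i$ and tracking the Gerstenhaber and Koszul signs, the diagram $U \tikzleftarrow W \tikzrightarrow V$ being simple enough that no further compositions can occur.
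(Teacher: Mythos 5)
Your proposal is correct and follows essentially the same route as the paper: the equivalence \ref{tfae1} $\Leftrightarrow$ \ref{tfae2} is the formal characterization of Maurer--Cartan elements via $P_\Phi$, and \ref{tfae2} $\Leftrightarrow$ \ref{tfae3} is obtained by expanding $P_\Phi(M)$, observing that only the terms $[M,\Phi]$ and $\tfrac12[[M,\Phi],\Phi]$ survive (the paper records these explicitly in (\ref{MPhi}) and (\ref{MPhiPhi})), and reading off the intertwining relations $\phi \circ \mu = \xi \circ \phi^{\otimes 2}$ and $\psi \circ \nu = \xi \circ \psi^{\otimes 2}$. Your typechecking justification for the truncation of the series and for which compositions survive the projection $P$ matches the paper's computation.
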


\begin{proof}
Recall from Notation \ref{pphi} that $P_\Phi = P \circ \exp [\blank, \Phi]$. Then \ref{tfae1} $\Leftrightarrow$ \ref{tfae2} follows from the definition of Maurer--Cartan elements. For \ref{tfae2} $\Leftrightarrow$ \ref{tfae3}, one calculates
\begin{align*}
P_\Phi (M)
&= P \big( M + [M, \Phi] + \tfrac12 [[M, \Phi], \Phi] + \tfrac16 [[[M, \Phi], \Phi], \Phi] + \dotsb \big) \\
&= P \big( [M, \Phi] + \tfrac12 [[M, \Phi], \Phi] \big)
\end{align*}
where
\begin{align}
\label{MPhi}
[M, \Phi] &= -\xi \circ_0 \phi \Oplus \xi \circ_1 \phi \Oplus -\xi \circ_0 \psi \Oplus \xi \circ_1 \psi \Oplus \phi \circ_0 \mu \Oplus \psi \circ_0 \nu
\intertext{and}
[[M, \Phi], \Phi] &= -(\xi \circ_0 \phi \circ_1 \phi + \xi \circ_1 \phi \circ_0 \phi) \Oplus -(\xi \circ_0 \phi \circ_1 \psi + \xi \circ_1 \psi \circ_0 \phi) \notag\\
&\qquad {} \Oplus -(\xi \circ_0 \psi \circ_1 \phi + \xi \circ_1 \phi \circ_0 \psi) \Oplus -(\xi \circ_0 \psi \circ_1 \psi + \xi \circ_1 \psi \circ_0 \psi) \notag\\
&= -2 \big( \xi \circ \phi^{\otimes 2} \Oplus \xi \circ (\phi \otimes \psi) \Oplus \xi \circ (\psi \otimes \phi) \Oplus \xi \circ \psi^{\otimes 2} \big) \label{MPhiPhi}
\end{align}
and higher commutators with $\Phi$ vanish. (Graphical illustrations of (\ref{MPhi}) and (\ref{MPhiPhi}) are given in (\ref{xPhi}) and (\ref{xPhiPhi}).)

We have that
\begin{align*}
P_{\Phi} M &= P \big( [M, \Phi] + \tfrac12 [[M, \Phi], \Phi] \big) \\
&= (\phi \circ \mu - \xi \circ \phi^{\otimes 2}) \oplus (\psi \circ \nu - \xi \circ \psi^{\otimes 2})
\end{align*}
which is zero if and only if
\begin{flalign*}
&& \phi \circ \mu &= \xi \circ \phi^{\otimes 2} && \\
&& \psi \circ \nu &= \xi \circ \psi^{\otimes 2} &&
\intertext{{\it i.e.}\ if and only if}
&& \phi (\mu (a, b)) &= \xi (\phi (a), \phi (b)) && \mathllap{\eqmakebox[lap][l]{$a, b \in A_U$}} \\
&& \psi (\nu (a, b)) &= \xi (\psi (a), \psi (b)) && \mathllap{\eqmakebox[lap][l]{$a, b \in A_V$}}
\end{flalign*}
so that $\phi$, respectively $\psi$, are morphisms compatible with the algebra structures on $A_U$ and $A_W$, respectively $A_V$ and $A_W$.
\end{proof}

We can now prove the following result.

\begin{theorem}
\label{maintheorem}
Let $(A_U, \mu) \toarg{\phi} (A_W, \xi) \leftarrowarg{\psi} (A_V, \nu)$ be a diagram of associative algebras over $U \tikzleftarrow W \tikzrightarrow V$. Let $\widetilde{\mathfrak g} \subset \mathfrak g$ be the subalgebra defined by
\[
\widetilde{\mathfrak g}\mkern2mu^n = \Hom (A_U^{\otimes n+1}, A_U) \oplus \Hom (A_V^{\otimes n+1}, A_V) \oplus \Hom (A_W^{\otimes n+1}, A_W).
\]
and let $\widetilde M = \widetilde\mu \oplus \widetilde\nu \oplus \widetilde\xi \in \widetilde{\mathfrak g}\mkern2mu^1$ and $\widetilde\Phi = \widetilde\phi \oplus \widetilde\psi \in \mathfrak a^0$ be arbitrary. Then
\[
\big( A_U, \mu + \widetilde\mu \big) \mathrel{\tikz[baseline] \path[-stealth,line width=.5pt] (0ex,0.65ex) edge node[above=-.5ex, overlay, font=\scriptsize] {$\phi + \widetilde\phi$} (6ex,.65ex);} \big( A_W, \xi + \widetilde\xi \,\big) \mathrel{\tikz[baseline] \path[stealth-,line width=.5pt] (0ex,0.65ex) edge node[above=-.5ex, overlay, font=\scriptsize] {$\psi + \widetilde\psi$} (6ex,.65ex);} \big( A_V, \nu + \widetilde\nu \big)
\]
is a diagram of associative algebras if and only if
\[
\widetilde M [1] \oplus \widetilde\Phi \in \MC \big( (\widetilde{\mathfrak g}[1] \oplus \mathfrak a)^{P_\Phi}_M \big).
\]
\end{theorem}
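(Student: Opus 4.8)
The plan is to unwind the Maurer--Cartan equation $\exp_{\lmb\,\rmb}(\widetilde M[1]\oplus\widetilde\Phi)=0$ using the explicit multibrackets of Theorem \ref{voronov}(\textit{ii}\hair) and to identify its two components — the one valued in $\widetilde{\mathfrak g}[1]$ and the one valued in $\mathfrak a$ — with, respectively, the associativity of the deformed multiplications and the compatibility of the deformed morphisms. First I would check that the object $(\widetilde{\mathfrak g}[1]\oplus\mathfrak a)^{P_\Phi}_M$ is well defined. Since $\phi$ and $\psi$ are morphisms, Proposition \ref{equivalent} gives $P_\Phi(M)=0$, so $M\in\ker P_\Phi\cap\mathfrak g^1$; because $\exp[\blank,\Phi]$ is a Lie algebra automorphism of $\mathfrak g$ and $\ker P$ is a subalgebra, $\ker P_\Phi=(\exp[\blank,\Phi])^{-1}(\ker P)$ is again a subalgebra, so that $(\mathfrak g,\mathfrak a,P_\Phi,M)$ are Voronov data in the uncurved sense. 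Moreover $M,\widetilde M\in\widetilde{\mathfrak g}$, and since $\mu,\nu,\xi$ are each composable only with themselves one has $[M,\widetilde{\mathfrak g}]\subseteq\widetilde{\mathfrak g}$; together with the fact that $\widetilde{\mathfrak g}$ is a subalgebra, this shows that $\widetilde{\mathfrak g}[1]\oplus\mathfrak a$ is closed under all the multibrackets of Theorem \ref{voronov}(\textit{ii}\hair), so that $(\widetilde{\mathfrak g}[1]\oplus\mathfrak a)^{P_\Phi}_M$ is a genuine sub-$L_\infty[1]$ algebra.

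Next I would expand the Maurer--Cartan equation for $\Xi=\widetilde M[1]\oplus\widetilde\Phi$ and split it into its $\widetilde{\mathfrak g}[1]$- and $\mathfrak a$-components. Both $\widetilde M[1]$ and $\widetilde\Phi$ have degree $0$, so all Koszul signs are trivial. For the $\widetilde{\mathfrak g}[1]$-component, the only multibracket taking values in $\mathfrak g[1]$ is the binary bracket of two $\mathfrak g[1]$-elements, so only the differential and the single term $\tfrac12\lmb\widetilde M[1],\widetilde M[1]\rmb$ survive; these produce (up to the suspension sign) the expression $[M,\widetilde M][1]+\tfrac12[\widetilde M,\widetilde M][1]$, which is the associativity equation $[M+\widetilde M,M+\widetilde M]=0$. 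As $[M+\widetilde M,M+\widetilde M]$ is diagonal, it splits into the three independent conditions that $\mu+\widetilde\mu$, $\nu+\widetilde\nu$ and $\xi+\widetilde\xi$ be associative, by Proposition \ref{associative}.

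For the $\mathfrak a$-component I would collect the two surviving families of brackets: those with all entries equal to $\widetilde\Phi$, summing to $\sum_{n\geq1}\tfrac1{n!}P_\Phi[\dotsc[M,\widetilde\Phi],\dotsc,\widetilde\Phi]$, and those with exactly one entry $\widetilde M[1]$ and the rest $\widetilde\Phi$, summing to $\sum_{k\geq0}\tfrac1{k!}P_\Phi[\dotsc[\widetilde M,\widetilde\Phi],\dotsc,\widetilde\Phi]$ (all other brackets vanish, and the two $k=0$/$n=1$ terms account exactly for the $\mathfrak a$-part $P_\Phi(\widetilde M+[M,\widetilde\Phi])$ of the differential). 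The crucial point is that $\mathfrak a$ is Abelian, so $[\Phi,\widetilde\Phi]=0$ and the commuting operators $[\blank,\Phi]$ and $[\blank,\widetilde\Phi]$ satisfy $\exp[\blank,\Phi]\,\exp[\blank,\widetilde\Phi]=\exp[\blank,\Phi+\widetilde\Phi]$. Using $P_\Phi=P\circ\exp[\blank,\Phi]$ together with $P_\Phi(M)=0$, the two families resum to $P_{\Phi+\widetilde\Phi}(M)$ and $P_{\Phi+\widetilde\Phi}(\widetilde M)$, so that the $\mathfrak a$-component of the Maurer--Cartan equation becomes
\[
P_{\Phi+\widetilde\Phi}(M+\widetilde M)=0.
\]
By Proposition \ref{equivalent} applied to the deformed data $M+\widetilde M$, this is precisely the statement that $\phi+\widetilde\phi$ and $\psi+\widetilde\psi$ are compatible with the deformed multiplications; combined with the previous paragraph, this yields the claim.

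The main obstacle I expect is twofold. The first is the sign bookkeeping through the suspension $[1]$ and the Voronov formulas — in particular reconciling the $-[M,\blank]$ of the differential in Theorem \ref{voronov}(\textit{ii}\hair) with the $+[M,\blank]$ of the single-algebra Maurer--Cartan equation, and fixing once and for all the sign convention relating $\widetilde\Phi$ to the deformations $\widetilde\phi,\widetilde\psi$ so that $\Phi+\widetilde\Phi$ corresponds to $\phi+\widetilde\phi,\psi+\widetilde\psi$. The second is justifying the resummation of the infinitely many multibrackets into the single clean expression $P_{\Phi+\widetilde\Phi}(M+\widetilde M)$; this rests entirely on the commutativity $[\Phi,\widetilde\Phi]=0$ afforded by $\mathfrak a$ being Abelian, which is what allows $\exp[\blank,\Phi]$ and $\exp[\blank,\widetilde\Phi]$ to be merged, and is precisely the structural feature that makes the derived-bracket description of these simultaneous deformations work.
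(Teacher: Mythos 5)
Your proposal is correct and follows essentially the same route as the paper's own (very terse) proof: expand $\exp_{\lmb\,\rmb}(\widetilde M[1]\oplus\widetilde\Phi)=0$, identify the $\widetilde{\mathfrak g}[1]$-component with $[M+\widetilde M,M+\widetilde M]=0$ and resum the $\mathfrak a$-component to $P\exp\ad_{\Phi+\widetilde\Phi}(M+\widetilde M)=0$, which are exactly the two equations the paper displays before deferring the details to Frégier--Zambon. Your additional checks --- that $\ker P_\Phi$ is a subalgebra so the Voronov data are well defined, that $\widetilde{\mathfrak g}[1]\oplus\mathfrak a$ is closed under the multibrackets, and that the resummation rests on $[\Phi,\widetilde\Phi]=0$ --- are exactly the details the paper omits, and the sign issues you flag are likewise left implicit in the paper.
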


\begin{proof}
Calculate $(\widetilde M [1] \oplus \widetilde\Phi)^{\lmb n\rmb}$ (see \cite[\S 1.4]{fregierzambon1}) to see that $\exp_{\lmb\,\rmb} \Phi = 0$ corresponds precisely to
\begin{align*}
[M + \widetilde M, M + \widetilde M] &= 0 \\
P_{\Phi + \widetilde\Phi} (M + \widetilde M) &= 0.
\end{align*}
Now apply Propositions \ref{associative} and \ref{equivalent} \ref{tfae2}.
\end{proof}

\begin{definition}
\label{extendedgerstenhaber}
Let $x \circ_i^\Phi a = x \circ (\Phi^{\otimes i} \otimes a \otimes \Phi^{\otimes n-i})$ and then set $x \circ^\Phi a = \sum_{i=0}^n (-1)^{mi} x \circ_i^\Phi a$.
We then define
\[
[x, a]^\Phi = x \circ^\Phi a - (-1)^{\lvert x \rvert \lvert a \rvert} a \circ x
\]
which is essentially the Gerstenhaber bracket extended from the case of algebras to two algebras and morphisms between them.
\end{definition}

Recall from Remark \ref{vanishinghigherbrackets} that any L$_\infty [1]$ algebra $(\mathfrak g, \lmb \blank \rmb, \lmb \blank {,} \blank \rmb, \dotsc)$ defines a cochain complex $(\mathfrak g, d)$ for $d = \lmb \blank \rmb$.

\begin{proposition}
\label{unary}
The cochain complex underlying the L$_\infty [1]$ algebra $(\widetilde{\mathfrak g}[1] \oplus \mathfrak a)^{P_\Phi}_M$ coincides with the (truncated) Gerstenhaber--Schack complex. In particular,
\[
\lmb x[1] \oplus a \rmb = d_{\mathrm{GS}} (x[1] \oplus a) = (-1)^{\lvert x \rvert + 1} (d_{\mathrm H} x)[1] \oplus d_\Delta x + (-1)^{\lvert a \rvert} d_{\mathrm H} a \; \in \widetilde{\mathfrak g}[1] \oplus \mathfrak a
\]
for homogeneous elements $x \in \widetilde{\mathfrak g}$ and $a \in \mathfrak a$.
\end{proposition}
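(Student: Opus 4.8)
The plan is to read the unary bracket $\lmb\blank\rmb=d$ straight off the second part of Voronov's Theorem \ref{voronov}, applied to the data $M\in\mathfrak g\toarg{P_\Phi}\mathfrak a$, and then to match its two components against the Hochschild and simplicial differentials of the bicomplex in Definition \ref{definitiongerstenhaberschack}. The first task is purely one of bookkeeping. Under the identifications $\widetilde{\mathfrak g}\mkern2mu^n=\mathrm C^{0,n+1}$ (a vertex cochain $x=x_U\oplus x_V\oplus x_W$ is a family of Hochschild cochains, one per object of $\mathfrak U$) and $\mathfrak a^n=\mathrm C^{1,n+1}$ (an edge cochain $a$ is a pair of cochains, one per arrow $W\toshort U$, $W\toshort V$), the graded vector space $\widetilde{\mathfrak g}[1]\oplus\mathfrak a$ is exactly the truncated reduced total complex. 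Here I would record that $\mathrm C^{2,\hdot}=0$: the two arrows of $\mathfrak U$ both emanate from $W$ and are not composable, so the nerve has no non-degenerate $2$-simplices and no further simplicial differential can appear, which is why only the single term $d_\Delta x$ (and no $d_\Delta a$) occurs in the formula. It then remains to show that $\lmb\blank\rmb$ agrees with $d_{\mathrm{GS}}$ under these identifications.

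By Theorem \ref{voronov}(ii), with $P$ replaced by $P_\Phi$, the unary bracket is
\[
\lmb x[1]\oplus a\rmb=-[M,x][1]\oplus P_\Phi\big(x+[M,a]\big).
\]
For the first component I would use that $M=\mu\oplus\nu\oplus\xi$ is block diagonal: since each of $\mu,\nu,\xi$ is composable only with cochains of its own algebra, $[M,x]$ splits as $[\mu,x_U]\oplus[\nu,x_V]\oplus[\xi,x_W]$, and each summand is the Hochschild differential of the corresponding algebra. Hence $-[M,x]$ is the vertical (Hochschild) differential $d_{\mathrm H}x$; tracking the suspension together with the sign by which $[M,\blank]$ encodes $d_{\mathrm H}$ produces the stated factor $(-1)^{\lvert x\rvert+1}$.

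The substance lies in the second component $P_\Phi(x+[M,a])=P\circ\exp[\blank,\Phi](x+[M,a])$. The key mechanism is that $P$ annihilates every cochain with mixed inputs and keeps only those with pure $A_U$- or pure $A_V$-inputs; thus $P_\Phi$ retains from the exponential exactly the terms in which $\Phi$ has been inserted so as to purify all inputs, which is precisely what the $\Phi$-twisted composition $\circ^\Phi$ of Definition \ref{extendedgerstenhaber} records. For $P_\Phi(x)$ (where the zeroth-order term drops since $x\in\ker P$), the target vertices $x_U,x_V$ survive only through postcomposition $\phi\circ x_U$, $\psi\circ x_V$ at first order in $\Phi$ (the $d_0$-faces), while the source vertex $x_W$ survives only once $\Phi$ has been precomposed into all $n+1$ of its slots, via $x_W\circ\phi^{\otimes n+1}$ and $x_W\circ\psi^{\otimes n+1}$ at top order in $\Phi$ (the $d_1$-faces); their alternating sum is $d_\Delta x$. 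For $P_\Phi([M,a])$, the zeroth-order term contributes the inner terms $a\circ\mu$, $a\circ\nu$, and the first-order term contributes the outer bimodule terms $\xi\circ^\Phi a$, in which the boundary inputs are carried into $A_W$ by $\phi,\psi$ and multiplied by $\xi$; together they assemble into $[M,a]^\Phi$, which is the Hochschild differential $d_{\mathrm H}a$ of $a$ as a bimodule cochain, up to the sign $(-1)^{\lvert a\rvert}$.

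The main obstacle is the finiteness-and-combinatorics step for the source vertex $x_W$: I must check that the only pure-input contribution of $\exp[\blank,\Phi]$ to $x_W$ is the all-slots insertion $x_W\circ\phi^{\otimes n+1}$, that it emerges with coefficient $1$ (the $(n+1)!$ orderings of the insertions cancel the factorial in the exponential), and that no term of order exceeding $n+1$ survives, since once every slot carries an $A_U$-input no further $\Phi$ can be composed in. Assembling these contributions while keeping track of the Gerstenhaber signs and of the total differential $d_{\mathrm{GS}}=d_{\mathrm H}+(-1)^{q+1}d_\Delta$ then yields the displayed formula, identifying the underlying cochain complex with the truncated Gerstenhaber--Schack complex.
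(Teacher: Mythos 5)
Your proposal is correct and follows essentially the same route as the paper: identify the graded vector space with the truncated bicomplex, read the unary bracket off Theorem \ref{voronov}, get $d_{\mathrm H}x$ from $[M,x]$ by block-diagonality, and extract $d_\Delta x$ from $P_\Phi(x)$ and $d_{\mathrm H}a$ from $P_\Phi([M,a])$ by tracking which terms of $\exp[\blank,\Phi]$ survive the projection onto pure-input cochains (the paper carries out exactly this bookkeeping in graphical operadic notation, including your observation that the $(n+1)!$ insertion orderings cancel the exponential's factorial).
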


\begin{proof}
That $\widetilde{\mathfrak g}[1] \oplus \mathfrak a$ coincides with $\mathrm C^\hdot_{\mathrm{GS}} (\mathbb A)$ as graded $\mathbb k$-vector space is clear from the definitions of $\mathfrak g$, $\widetilde{\mathfrak g}$ and $\mathfrak a$, see (\ref{ga}) and Theorem \ref{maintheorem}.

To show that $\lmb \blank \rmb = d_{\mathrm{GS}}$ we compute the unary bracket in Theorem \ref{voronov} applied to $(\widetilde{\mathfrak g}[1] \oplus \mathfrak a)^{P_\Phi}_M$. For $x \in \widetilde{\mathfrak g}^n$ and $a \in \mathfrak a^m$, Theorem \ref{voronov} gives the following formula
\[
\lmb x[1] \oplus a \rmb = -[M, x][1] \oplus P_\Phi (x + [M, a])
\]
where
\begin{equation}
\label{MPhixa}
\begin{aligned}
   M &= \eqmakebox[munuxiphipsi]{$(\mu, \nu, \xi)$} \in \widetilde{\mathfrak g}^1 \qquad&
   x &= \eqmakebox[xa]{$(x_U, x_V, x_W)$} \in \widetilde{\mathfrak g}^n \\
\Phi &= \eqmakebox[munuxiphipsi]{$(\phi, \psi)$} \in \mathfrak a^0 &
   a &= \eqmakebox[xa]{$(a_{W \smallto U}, a_{W \smallto V})$} \in \mathfrak a^m
\end{aligned}
\end{equation}
and
\begin{equation*}
P_\Phi (\blank) = P \circ \exp [\blank, \Phi] = P (\blank) + P [\blank, \Phi] + \tfrac12 P [[\blank, \Phi], \Phi] + \dotsb
\end{equation*}
We give a visual proof, denoting inputs and outputs in $A_U, A_V, A_W$ by
\hair
$\begin{tikzpicture}[baseline=-3,x=.75em,y=.75em] \draw[line width=.65pt] node[draw,shape=rectangle,scale=.6] {}; \end{tikzpicture}$\hair, 
$\begin{tikzpicture}[baseline=-3,x=.75em,y=.75em] \draw[line width=.65pt] node[draw,shape=diamond,scale=.43] {}; \end{tikzpicture}$\hair, 
$\begin{tikzpicture}[baseline=-3,x=.75em,y=.75em] \draw[line width=.65pt] node[draw,shape=circle,scale=.5] {}; \end{tikzpicture}$\hair, 
respectively, to write (\ref{MPhixa}) in operadic notation as
\begin{alignat*}{5}
M &=
\Big(
\,
\begin{tikzpicture}[baseline=7,x=.75em,y=.75em]
\draw[line width=.6pt] (-.5,0) node[draw,shape=rectangle,scale=.6] (L) {};
\draw[line width=.6pt] (.5,0)  node[draw,shape=rectangle,scale=.6] (R) {};
\draw[line width=.6pt] (0,2) node[draw,shape=rectangle,scale=.6] (T) {};
\draw[line width=.6pt] (L) -- ++(0,.75) -- ++(.5,.5) -- (T);
\draw[line width=.6pt] (R) -- ++(0,.75) -- ++(-.5,.5);
\end{tikzpicture}
\,
\raisebox{-1.5ex}{,}
\,
\begin{tikzpicture}[baseline=7,x=.75em,y=.75em]
\draw[line width=.6pt] (-.5,0) node[draw,shape=diamond,scale=.43] (L) {};
\draw[line width=.6pt] (.5,0)  node[draw,shape=diamond,scale=.43] (R) {};
\draw[line width=.6pt] (0,2) node[draw,shape=diamond,scale=.43] (T) {};
\draw[line width=.6pt] (L) -- ++(0,.75) -- ++(.5,.5) -- (T);
\draw[line width=.6pt] (R) -- ++(0,.75) -- ++(-.5,.5);
\end{tikzpicture}
\,
\raisebox{-1.5ex}{,}
\,
\begin{tikzpicture}[baseline=7,x=.75em,y=.75em]
\draw[line width=.6pt] (-.5,0) node[draw,shape=circle,scale=.5] (L) {};
\draw[line width=.6pt] (.5,0)  node[draw,shape=circle,scale=.5] (R) {};
\draw[line width=.6pt] (0,2) node[draw,shape=circle,scale=.5] (T) {};
\draw[line width=.6pt] (L) -- ++(0,.75) -- ++(.5,.5) -- (T);
\draw[line width=.6pt] (R) -- ++(0,.75) -- ++(-.5,.5);
\end{tikzpicture}
\,
\Big) & {} \in {} & \widetilde{\mathfrak g}^1 \qquad\quad
& x &= 
\bigg(
\,
\begin{tikzpicture}[baseline=9,x=.75em,y=.75em]
\draw[line width=.6pt] (-1,0) node[draw,shape=rectangle,scale=.6] (L) {};
\draw[line width=.6pt] (1,0)  node[draw,shape=rectangle,scale=.6] (R) {};
\draw[line width=.6pt] (0,2.5) node[draw,shape=rectangle,scale=.6] (T) {};
\draw (0,0) node[font=\scriptsize] {...};
\draw[line width=.6pt] (L) -- ++(0,.75) -- ++(1,1) -- (T);
\draw[line width=.6pt] (R) -- ++(0,.75) -- ++(-1,1);
\end{tikzpicture}
\,
\raisebox{-2ex}{,}
\,
\begin{tikzpicture}[baseline=9,x=.75em,y=.75em]
\draw[line width=.6pt] (-1,0) node[draw,shape=diamond,scale=.43] (L) {};
\draw[line width=.6pt] (1,0)  node[draw,shape=diamond,scale=.43] (R) {};
\draw[line width=.6pt] (0,2.5) node[draw,shape=diamond,scale=.43] (T) {};
\draw (0,0) node[font=\scriptsize] {...};
\draw[line width=.6pt] (L) -- ++(0,.75) -- ++(1,1) -- (T);
\draw[line width=.6pt] (R) -- ++(0,.75) -- ++(-1,1);
\end{tikzpicture}
\,
\raisebox{-2ex}{,}
\,
\begin{tikzpicture}[baseline=9,x=.75em,y=.75em]
\draw[line width=.6pt] (-1,0) node[draw,shape=circle,scale=.5] (L) {};
\draw[line width=.6pt] (1,0)  node[draw,shape=circle,scale=.5] (R) {};
\draw[line width=.6pt] (0,2.5) node[draw,shape=circle,scale=.5] (T) {};
\draw (0,0) node[font=\scriptsize] {...};
\draw[line width=.6pt] (L) -- ++(0,.75) -- ++(1,1) -- (T);
\draw[line width=.6pt] (R) -- ++(0,.75) -- ++(-1,1);
\end{tikzpicture}
\,
\bigg) & {} \in {} & \widetilde{\mathfrak g}^n
\\
\Phi &=
\Big(
\,
\begin{tikzpicture}[baseline=4.5,x=.75em,y=.75em]
\draw[line width=.6pt] (0,0) node[draw,shape=rectangle,scale=.6] (B) {};
\draw[line width=.6pt] (0,1.5) node[draw,shape=circle,scale=.5] (T) {};
\draw[line width=.6pt] (B) -- (T);
\end{tikzpicture}
\,
\raisebox{-1ex}{,}
\,
\begin{tikzpicture}[baseline=4.5,x=.75em,y=.75em]
\draw[line width=.6pt] (0,0) node[draw,shape=diamond,scale=.43] (B) {};
\draw[line width=.6pt] (0,1.5) node[draw,shape=circle,scale=.5] (T) {};
\draw[line width=.6pt] (B) -- (T);
\end{tikzpicture}
\,
\Big) & {} \in {} & \mathfrak a^0 
& a &=
\bigg(
\,
\begin{tikzpicture}[baseline=9,x=.75em,y=.75em]
\draw[line width=.6pt] (-1,0) node[draw,shape=rectangle,scale=.6] (L) {};
\draw[line width=.6pt] (1,0)  node[draw,shape=rectangle,scale=.6] (R) {};
\draw[line width=.6pt] (0,2.5) node[draw,shape=circle,scale=.5] (T) {};
\draw (0,0) node[font=\scriptsize] {...};
\draw[line width=.6pt] (L) -- ++(0,.75) -- ++(1,1) -- (T);
\draw[line width=.6pt] (R) -- ++(0,.75) -- ++(-1,1);
\end{tikzpicture}
\,
\raisebox{-2ex}{,}
\,
\begin{tikzpicture}[baseline=9,x=.75em,y=.75em]
\draw[line width=.6pt] (-1,0) node[draw,shape=diamond,scale=.43] (L) {};
\draw[line width=.6pt] (1,0)  node[draw,shape=diamond,scale=.43] (R) {};
\draw[line width=.6pt] (0,2.5) node[draw,shape=circle,scale=.5] (T) {};
\draw (0,0) node[font=\scriptsize] {...};
\draw[line width=.6pt] (L) -- ++(0,.75) -- ++(1,1) -- (T);
\draw[line width=.6pt] (R) -- ++(0,.75) -- ++(-1,1);
\end{tikzpicture}
\,
\bigg) & {} \in {} & \mathfrak a^m
\end{alignat*}
(Inputs are at the bottom and outputs at the top of the diagram.) We can now calculate the Gerstenhaber brackets in $\mathfrak g$.

\paragraph{\bf Computation of $\lmb x[1] \rmb = -[M, x][1] \oplus P_\Phi (x)$.} \mbox{}

\noindent That $[M, x] = d_{\mathrm H} (x)$ is clear from the definition of $d_{\mathrm H}$. (The sign $(-1)^{\lvert x \rvert + 1}$ appears as part of the total differential.)

To calculate $P_\Phi (x) = P (x + [x, \Phi] + \tfrac12 [[x, \Phi], \Phi] + \dotsb)$ we calculate
\begin{align}
[x, \Phi] &=
\bigoplus_{i=0}^n
\begin{tikzpicture}[baseline=5,x=.75em,y=.75em]
\draw[line width=.6pt] (-2,0)   node[draw,shape=circle,scale=.5] (L) {};
\draw[line width=.6pt] (2,0)    node[draw,shape=circle,scale=.5] (R) {};
\draw[line width=.6pt] (0,0)    node[draw,shape=circle,scale=.5] (M) {};
\draw[line width=.6pt] (0,2.5)  node[draw,shape=circle,scale=.5] (T) {};
\draw[line width=.6pt] (0,-1.5) node[draw,shape=rectangle,scale=.6] (B) {};
\draw (-1,0) node[font=\scriptsize] {...};
\draw (1,0) node[font=\scriptsize] {...};
\draw[line width=.6pt] (L) -- ++(0,.75) -- ++(2,1) -- (T);
\draw[line width=.6pt] (R) -- ++(0,.75) -- ++(-2,1);
\draw[line width=.6pt] (M) -- (T);
\draw[line width=.6pt] (B) -- (M);
\draw (0,.75) node[left=-.2em,font=\scriptsize] {$i$};
\end{tikzpicture}
\;\oplus\;
\bigoplus_{i=0}^n
\begin{tikzpicture}[baseline=5,x=.75em,y=.75em]
\draw[line width=.6pt] (-2,0)   node[draw,shape=circle,scale=.5] (L) {};
\draw[line width=.6pt] (2,0)    node[draw,shape=circle,scale=.5] (R) {};
\draw[line width=.6pt] (0,0)    node[draw,shape=circle,scale=.5] (M) {};
\draw[line width=.6pt] (0,2.5)  node[draw,shape=circle,scale=.5] (T) {};
\draw[line width=.6pt] (0,-1.5) node[draw,shape=diamond,scale=.43] (B) {};
\draw (-1,0) node[font=\scriptsize] {...};
\draw (1,0) node[font=\scriptsize] {...};
\draw[line width=.6pt] (L) -- ++(0,.75) -- ++(2,1) -- (T);
\draw[line width=.6pt] (R) -- ++(0,.75) -- ++(-2,1);
\draw[line width=.6pt] (M) -- (T);
\draw[line width=.6pt] (B) -- (M);
\draw (0,.75) node[left=-.2em,font=\scriptsize] {$i$};
\end{tikzpicture}
\;\oplus\;
(-1) 
\begin{tikzpicture}[baseline=15,x=.75em,y=.75em]
\draw[line width=.6pt] (-1,0) node[draw,shape=rectangle,scale=.6] (L) {};
\draw[line width=.6pt] (1,0)  node[draw,shape=rectangle,scale=.6] (R) {};
\draw[line width=.6pt] (0,2.5) node[draw,shape=rectangle,scale=.6] (T) {};
\draw[line width=.6pt] (0,4) node[draw,shape=circle,scale=.5] (TT) {};
\draw (0,0) node[font=\scriptsize] {...};
\draw[line width=.6pt] (L) -- ++(0,.75) -- ++(1,1) -- (T);
\draw[line width=.6pt] (R) -- ++(0,.75) -- ++(-1,1);
\draw[line width=.6pt] (T) -- (TT);
\end{tikzpicture}
\;\oplus\;
(-1)
\begin{tikzpicture}[baseline=15,x=.75em,y=.75em]
\draw[line width=.6pt] (-1,0)  node[draw,shape=diamond,scale=.43] (L) {};
\draw[line width=.6pt] (1,0)   node[draw,shape=diamond,scale=.43] (R) {};
\draw[line width=.6pt] (0,2.5) node[draw,shape=diamond,scale=.43] (T) {};
\draw[line width=.6pt] (0,4)   node[draw,shape=circle,scale=.5] (TT) {};
\draw (0,0) node[font=\scriptsize] {...};
\draw[line width=.6pt] (L) -- ++(0,.75) -- ++(1,1) -- (T);
\draw[line width=.6pt] (R) -- ++(0,.75) -- ++(-1,1);
\draw[line width=.6pt] (T) -- (TT);
\end{tikzpicture}
\label{xPhi}
\\
\tfrac12 [[x, \Phi], \Phi] &=
\bigoplus_{i < j}
\begin{tikzpicture}[baseline=5,x=.75em,y=.75em]
\draw[line width=.6pt] (-3,0)    node[draw,shape=circle,scale=.5] (L) {};
\draw[line width=.6pt] (3,0)     node[draw,shape=circle,scale=.5] (R) {};
\draw[line width=.6pt] (-1,0)    node[draw,shape=circle,scale=.5] (ML) {};
\draw[line width=.6pt] (1,0)     node[draw,shape=circle,scale=.5] (MR) {};
\draw[line width=.6pt] (0,2.5)   node[draw,shape=circle,scale=.5] (T) {};
\draw[line width=.6pt] (-1,-1.5) node[draw,shape=rectangle,scale=.6] (BL) {};
\draw[line width=.6pt] (1,-1.5)  node[draw,shape=rectangle,scale=.6] (BR) {};
\draw (-2,0) node[font=\scriptsize] {...};
\draw (0,0) node[font=\scriptsize] {...};
\draw (2,0) node[font=\scriptsize] {...};
\draw[line width=.6pt] (L) -- ++(0,.75) -- ++(3,1) -- (T);
\draw[line width=.6pt] (R) -- ++(0,.75) -- ++(-3,1);
\draw[line width=.6pt] (ML) -- ++(0,.75) -- ++(1,1);
\draw[line width=.6pt] (MR) -- ++(0,.75) -- ++(-1,1);
\draw[line width=.6pt] (BL) -- (ML);
\draw[line width=.6pt] (BR) -- (MR);
\draw (0,.75) node[left=-.1em,font=\scriptsize] {$i$};
\draw (0,.65) node[right=-.24em,font=\scriptsize] {$j$};
\end{tikzpicture}
\;\oplus\;
\bigoplus_{i < j}
\begin{tikzpicture}[baseline=5,x=.75em,y=.75em]
\draw[line width=.6pt] (-3,0)    node[draw,shape=circle,scale=.5] (L) {};
\draw[line width=.6pt] (3,0)     node[draw,shape=circle,scale=.5] (R) {};
\draw[line width=.6pt] (-1,0)    node[draw,shape=circle,scale=.5] (ML) {};
\draw[line width=.6pt] (1,0)     node[draw,shape=circle,scale=.5] (MR) {};
\draw[line width=.6pt] (0,2.5)   node[draw,shape=circle,scale=.5] (T) {};
\draw[line width=.6pt] (-1,-1.5) node[draw,shape=diamond,scale=.43] (BL) {};
\draw[line width=.6pt] (1,-1.5)  node[draw,shape=diamond,scale=.43] (BR) {};
\draw (-2,0) node[font=\scriptsize] {...};
\draw (0,0) node[font=\scriptsize] {...};
\draw (2,0) node[font=\scriptsize] {...};
\draw[line width=.6pt] (L) -- ++(0,.75) -- ++(3,1) -- (T);
\draw[line width=.6pt] (R) -- ++(0,.75) -- ++(-3,1);
\draw[line width=.6pt] (ML) -- ++(0,.75) -- ++(1,1);
\draw[line width=.6pt] (MR) -- ++(0,.75) -- ++(-1,1);
\draw[line width=.6pt] (BL) -- (ML);
\draw[line width=.6pt] (BR) -- (MR);
\draw (0,.75) node[left=-.1em,font=\scriptsize] {$i$};
\draw (0,.65) node[right=-.24em,font=\scriptsize] {$j$};
\end{tikzpicture}
\notag
\\
&\phantom{={}}
\;\oplus\;
\bigoplus_{i < j}
\begin{tikzpicture}[baseline=5,x=.75em,y=.75em]
\draw[line width=.6pt] (-3,0)    node[draw,shape=circle,scale=.5] (L) {};
\draw[line width=.6pt] (3,0)     node[draw,shape=circle,scale=.5] (R) {};
\draw[line width=.6pt] (-1,0)    node[draw,shape=circle,scale=.5] (ML) {};
\draw[line width=.6pt] (1,0)     node[draw,shape=circle,scale=.5] (MR) {};
\draw[line width=.6pt] (0,2.5)   node[draw,shape=circle,scale=.5] (T) {};
\draw[line width=.6pt] (-1,-1.5) node[draw,shape=rectangle,scale=.6] (BL) {};
\draw[line width=.6pt] (1,-1.5)  node[draw,shape=diamond,scale=.43] (BR) {};
\draw (-2,0) node[font=\scriptsize] {...};
\draw (0,0) node[font=\scriptsize] {...};
\draw (2,0) node[font=\scriptsize] {...};
\draw[line width=.6pt] (L) -- ++(0,.75) -- ++(3,1) -- (T);
\draw[line width=.6pt] (R) -- ++(0,.75) -- ++(-3,1);
\draw[line width=.6pt] (ML) -- ++(0,.75) -- ++(1,1);
\draw[line width=.6pt] (MR) -- ++(0,.75) -- ++(-1,1);
\draw[line width=.6pt] (BL) -- (ML);
\draw[line width=.6pt] (BR) -- (MR);
\draw (0,.75) node[left=-.1em,font=\scriptsize] {$i$};
\draw (0,.65) node[right=-.24em,font=\scriptsize] {$j$};
\end{tikzpicture}
\;\oplus\;
\bigoplus_{i < j}
\begin{tikzpicture}[baseline=5,x=.75em,y=.75em]
\draw[line width=.6pt] (-3,0)    node[draw,shape=circle,scale=.5] (L) {};
\draw[line width=.6pt] (3,0)     node[draw,shape=circle,scale=.5] (R) {};
\draw[line width=.6pt] (-1,0)    node[draw,shape=circle,scale=.5] (ML) {};
\draw[line width=.6pt] (1,0)     node[draw,shape=circle,scale=.5] (MR) {};
\draw[line width=.6pt] (0,2.5)   node[draw,shape=circle,scale=.5] (T) {};
\draw[line width=.6pt] (-1,-1.5) node[draw,shape=diamond,scale=.43] (BL) {};
\draw[line width=.6pt] (1,-1.5)  node[draw,shape=rectangle,scale=.6] (BR) {};
\draw (-2,0) node[font=\scriptsize] {...};
\draw (0,0) node[font=\scriptsize] {...};
\draw (2,0) node[font=\scriptsize] {...};
\draw[line width=.6pt] (L) -- ++(0,.75) -- ++(3,1) -- (T);
\draw[line width=.6pt] (R) -- ++(0,.75) -- ++(-3,1);
\draw[line width=.6pt] (ML) -- ++(0,.75) -- ++(1,1);
\draw[line width=.6pt] (MR) -- ++(0,.75) -- ++(-1,1);
\draw[line width=.6pt] (BL) -- (ML);
\draw[line width=.6pt] (BR) -- (MR);
\draw (0,.75) node[left=-.1em,font=\scriptsize] {$i$};
\draw (0,.65) node[right=-.24em,font=\scriptsize] {$j$};
\end{tikzpicture}
\label{xPhiPhi}
\end{align}
Since $x \notin \mathfrak a$, $P (x) = 0$. In the expression of $[x, \Phi]$ only the last two terms are in $\mathfrak a$, so
\[
P [x, \Phi] = 
(-1) 
\begin{tikzpicture}[baseline=15,x=.75em,y=.75em]
\draw[line width=.6pt] (-1,0) node[draw,shape=rectangle,scale=.6] (L) {};
\draw[line width=.6pt] (1,0)  node[draw,shape=rectangle,scale=.6] (R) {};
\draw[line width=.6pt] (0,2.5) node[draw,shape=rectangle,scale=.6] (T) {};
\draw[line width=.6pt] (0,4) node[draw,shape=circle,scale=.5] (TT) {};
\draw (0,0) node[font=\scriptsize] {...};
\draw[line width=.6pt] (L) -- ++(0,.75) -- ++(1,1) -- (T);
\draw[line width=.6pt] (R) -- ++(0,.75) -- ++(-1,1);
\draw[line width=.6pt] (T) -- (TT);
\end{tikzpicture}
\;\oplus\;
(-1)
\begin{tikzpicture}[baseline=15,x=.75em,y=.75em]
\draw[line width=.6pt] (-1,0)  node[draw,shape=diamond,scale=.43] (L) {};
\draw[line width=.6pt] (1,0)   node[draw,shape=diamond,scale=.43] (R) {};
\draw[line width=.6pt] (0,2.5) node[draw,shape=diamond,scale=.43] (T) {};
\draw[line width=.6pt] (0,4)   node[draw,shape=circle,scale=.5] (TT) {};
\draw (0,0) node[font=\scriptsize] {...};
\draw[line width=.6pt] (L) -- ++(0,.75) -- ++(1,1) -- (T);
\draw[line width=.6pt] (R) -- ++(0,.75) -- ++(-1,1);
\draw[line width=.6pt] (T) -- (TT);
\end{tikzpicture}
= - \Phi \circ x
\]
In $[[x, \Phi], \Phi]$ none of the terms are in $\mathfrak a$, so $P [[x, \Phi], \Phi] = 0$.

Similarly for $[...[[x, \Phi], \Phi], \dotsc, \Phi]$, the only terms surviving the projection to $\mathfrak a$ are terms where $\Phi$ has been precomposed in all inputs of $x$, so that
\[
\frac1{n!}P [...[[x, \underbrace{\Phi], \Phi], \dotsc, \Phi}_{n+1}] = 
\begin{tikzpicture}[baseline=5,x=.75em,y=.75em]
\draw[line width=.6pt] (-1,0)   node[draw,shape=circle,scale=.5] (L) {};
\draw[line width=.6pt] (1,0)    node[draw,shape=circle,scale=.5] (R) {};
\draw[line width=.6pt] (0,2.5)  node[draw,shape=circle,scale=.5] (T) {};
\draw[line width=.6pt] (-1,-1.5) node[draw,shape=rectangle,scale=.6] (BL) {};
\draw[line width=.6pt] (1,-1.5) node[draw,shape=rectangle,scale=.6] (BR) {};
\draw (0,0) node[font=\scriptsize] {...};
\draw (0,-1.5) node[font=\scriptsize] {...};
\draw[line width=.6pt] (L) -- ++(0,.75) -- ++(1,1) -- (T);
\draw[line width=.6pt] (R) -- ++(0,.75) -- ++(-1,1);
\draw[line width=.6pt] (BL) -- (L);
\draw[line width=.6pt] (BR) -- (R);
\end{tikzpicture}
\;\oplus\;
\begin{tikzpicture}[baseline=5,x=.75em,y=.75em]
\draw[line width=.6pt] (-1,0)   node[draw,shape=circle,scale=.5] (L) {};
\draw[line width=.6pt] (1,0)    node[draw,shape=circle,scale=.5] (R) {};
\draw[line width=.6pt] (0,2.5)  node[draw,shape=circle,scale=.5] (T) {};
\draw[line width=.6pt] (-1,-1.5) node[draw,shape=diamond,scale=.43] (BL) {};
\draw[line width=.6pt] (1,-1.5) node[draw,shape=diamond,scale=.43] (BR) {};
\draw (0,0) node[font=\scriptsize] {...};
\draw (0,-1.5) node[font=\scriptsize] {...};
\draw[line width=.6pt] (L) -- ++(0,.75) -- ++(1,1) -- (T);
\draw[line width=.6pt] (R) -- ++(0,.75) -- ++(-1,1);
\draw[line width=.6pt] (BL) -- (L);
\draw[line width=.6pt] (BR) -- (R);
\end{tikzpicture}
= x \circ \Phi^{\otimes n+1}
\]
We thus have
\[
P_\Phi (x) = x \circ \Phi^{\otimes n+1} - \Phi \circ x = d_\Delta (x),
\]
where $d_\Delta$ is the simplicial differential of Definition \ref{definitiongerstenhaberschack}, whose formula is given in Definition \ref{definitionsimplicial}.

\paragraph{Computation of $P_\Phi ([M, a])$.} \mbox{}

\noindent To calculate $P_\Phi (a) = P ([M, a] + [[M, a], \Phi] + \tfrac12 [[[M, a], \Phi], \Phi] + \dotsb)$ we calculate
\begin{align*}
[M, a] &= 
\begin{tikzpicture}[baseline=15,x=.75em,y=.75em]
\draw[line width=.6pt] (-1,0) node[draw,shape=rectangle,scale=.6] (L) {};
\draw[line width=.6pt] (1,0)  node[draw,shape=rectangle,scale=.6] (R) {};
\draw[line width=.6pt] (0,2.5) node[draw,shape=circle,scale=.5] (T) {};
\draw[line width=.6pt] (1,2.5) node[draw,shape=circle,scale=.5] (TR) {};
\draw[line width=.6pt] (.5,4.5) node[draw,shape=circle,scale=.5] (TT) {};
\draw (0,0) node[font=\scriptsize] {...};
\draw[line width=.6pt] (L) -- ++(0,.75) -- ++(1,1) -- (T);
\draw[line width=.6pt] (R) -- ++(0,.75) -- ++(-1,1);
\draw[line width=.6pt] (TR) -- ++(0,.75) -- ++(-.5,.5) -- (TT);
\draw[line width=.6pt] (T) -- ++(0,.75) -- ++(.5,.5);
\end{tikzpicture}
\;\oplus
(-1)^m\;
\begin{tikzpicture}[baseline=15,x=.75em,y=.75em]
\draw[line width=.6pt] (-1,0) node[draw,shape=rectangle,scale=.6] (L) {};
\draw[line width=.6pt] (1,0)  node[draw,shape=rectangle,scale=.6] (R) {};
\draw[line width=.6pt] (0,2.5) node[draw,shape=circle,scale=.5] (T) {};
\draw[line width=.6pt] (-1,2.5) node[draw,shape=circle,scale=.5] (TL) {};
\draw[line width=.6pt] (-.5,4.5) node[draw,shape=circle,scale=.5] (TT) {};
\draw (0,0) node[font=\scriptsize] {...};
\draw[line width=.6pt] (L) -- ++(0,.75) -- ++(1,1) -- (T);
\draw[line width=.6pt] (R) -- ++(0,.75) -- ++(-1,1);
\draw[line width=.6pt] (TL) -- ++(0,.75) -- ++(.5,.5) -- (TT);
\draw[line width=.6pt] (T) -- ++(0,.75) -- ++(-.5,.5);
\end{tikzpicture}
\;\oplus\;
\begin{tikzpicture}[baseline=15,x=.75em,y=.75em]
\draw[line width=.6pt] (-1,0) node[draw,shape=diamond,scale=.43] (L) {};
\draw[line width=.6pt] (1,0)  node[draw,shape=diamond,scale=.43] (R) {};
\draw[line width=.6pt] (0,2.5) node[draw,shape=circle,scale=.5] (T) {};
\draw[line width=.6pt] (1,2.5) node[draw,shape=circle,scale=.5] (TR) {};
\draw[line width=.6pt] (.5,4.5) node[draw,shape=circle,scale=.5] (TT) {};
\draw (0,0) node[font=\scriptsize] {...};
\draw[line width=.6pt] (L) -- ++(0,.75) -- ++(1,1) -- (T);
\draw[line width=.6pt] (R) -- ++(0,.75) -- ++(-1,1);
\draw[line width=.6pt] (TR) -- ++(0,.75) -- ++(-.5,.5) -- (TT);
\draw[line width=.6pt] (T) -- ++(0,.75) -- ++(.5,.5);
\end{tikzpicture}
\;\oplus
(-1)^m\;
\begin{tikzpicture}[baseline=15,x=.75em,y=.75em]
\draw[line width=.6pt] (-1,0) node[draw,shape=diamond,scale=.43] (L) {};
\draw[line width=.6pt] (1,0)  node[draw,shape=diamond,scale=.43] (R) {};
\draw[line width=.6pt] (0,2.5) node[draw,shape=circle,scale=.5] (T) {};
\draw[line width=.6pt] (-1,2.5) node[draw,shape=circle,scale=.5] (TL) {};
\draw[line width=.6pt] (-.5,4.5) node[draw,shape=circle,scale=.5] (TT) {};
\draw (0,0) node[font=\scriptsize] {...};
\draw[line width=.6pt] (L) -- ++(0,.75) -- ++(1,1) -- (T);
\draw[line width=.6pt] (R) -- ++(0,.75) -- ++(-1,1);
\draw[line width=.6pt] (TL) -- ++(0,.75) -- ++(.5,.5) -- (TT);
\draw[line width=.6pt] (T) -- ++(0,.75) -- ++(-.5,.5);
\end{tikzpicture}
\\
& \phantom{={}} \;\oplus
(-1)^{m+1}
\sum_{i = 0}^m
(-1)^i\;
\begin{tikzpicture}[baseline=-3,x=.75em,y=.75em]
\draw[line width=.6pt] (-2,0) node[draw,shape=rectangle,scale=.6] (L) {};
\draw[line width=.6pt] (2,0)  node[draw,shape=rectangle,scale=.6] (R) {};
\draw[line width=.6pt] (0,0)  node[draw,shape=rectangle,scale=.6] (M) {};
\draw[line width=.6pt] (0,2.5) node[draw,shape=circle,scale=.5] (T) {};
\draw[line width=.6pt] (-.5,-2) node[draw,shape=rectangle,scale=.6] (BL) {};
\draw[line width=.6pt] (.5,-2) node[draw,shape=rectangle,scale=.6] (BR) {};
\draw (-1,0) node[font=\scriptsize] {...};
\draw (1,0) node[font=\scriptsize] {...};
\draw[line width=.6pt] (L) -- ++(0,.75) -- ++(2,1) -- (T);
\draw[line width=.6pt] (R) -- ++(0,.75) -- ++(-2,1);
\draw[line width=.6pt] (BL) -- ++(0,.75) -- ++(.5,.5) -- (M);
\draw[line width=.6pt] (BR) -- ++(0,.75) -- ++(-.5,.5);
\draw[line width=.6pt] (M) -- ++(0,.75) -- ++(0,1);
\end{tikzpicture}
\;\oplus
(-1)^{m+1}
\sum_{i = 0}^m
(-1)^i \;
\begin{tikzpicture}[baseline=-3,x=.75em,y=.75em]
\draw[line width=.6pt] (-2,0) node[draw,shape=diamond,scale=.43] (L) {};
\draw[line width=.6pt] (2,0)  node[draw,shape=diamond,scale=.43] (R) {};
\draw[line width=.6pt] (0,0)  node[draw,shape=diamond,scale=.43] (M) {};
\draw[line width=.6pt] (0,2.5) node[draw,shape=circle,scale=.5] (T) {};
\draw[line width=.6pt] (-.5,-2) node[draw,shape=diamond,scale=.43] (BL) {};
\draw[line width=.6pt] (.5,-2) node[draw,shape=diamond,scale=.43] (BR) {};
\draw (-1,0) node[font=\scriptsize] {...};
\draw (1,0) node[font=\scriptsize] {...};
\draw[line width=.6pt] (L) -- ++(0,.75) -- ++(2,1) -- (T);
\draw[line width=.6pt] (R) -- ++(0,.75) -- ++(-2,1);
\draw[line width=.6pt] (BL) -- ++(0,.75) -- ++(.5,.5) -- (M);
\draw[line width=.6pt] (BR) -- ++(0,.75) -- ++(-.5,.5);
\draw[line width=.6pt] (M) -- ++(0,.75) -- ++(0,1);
\end{tikzpicture}
\\
[[M, a], \Phi] &= 
\begin{tikzpicture}[baseline=15,x=.75em,y=.75em]
\draw[line width=.6pt] (-1.5,-.5) node[draw,shape=rectangle,scale=.6] (L) {};
\draw[line width=.6pt] (.5,-.5)  node[draw,shape=rectangle,scale=.6] (R) {};
\draw[line width=.6pt] (1.5,-.5)  node[draw,shape=rectangle,scale=.6] (RR) {};
\draw[line width=.6pt] (-.5,2) node[draw,shape=circle,scale=.5] (T) {};
\draw[line width=.6pt] (1.5,2) node[draw,shape=circle,scale=.5] (TR) {};
\draw[line width=.6pt] (.5,4.5) node[draw,shape=circle,scale=.5] (TT) {};
\draw (-.5,-.5) node[font=\scriptsize] {...};
\draw[line width=.6pt] (L) -- ++(0,.75) -- ++(1,1) -- (T);
\draw[line width=.6pt] (R) -- ++(0,.75) -- ++(-1,1);
\draw[line width=.6pt] (TR) -- ++(0,.75) -- ++(-1,1) -- (TT);
\draw[line width=.6pt] (T) -- ++(0,.75) -- ++(1,1);
\draw[line width=.6pt] (RR) -- (TR);
\end{tikzpicture}
\;+
(-1)^m \;
\begin{tikzpicture}[baseline=15,x=.75em,y=.75em]
\draw[line width=.6pt] (1.5,-.5) node[draw,shape=rectangle,scale=.6] (L) {};
\draw[line width=.6pt] (-.5,-.5)  node[draw,shape=rectangle,scale=.6] (R) {};
\draw[line width=.6pt] (-1.5,-.5)  node[draw,shape=rectangle,scale=.6] (RR) {};
\draw[line width=.6pt] (.5,2) node[draw,shape=circle,scale=.5] (T) {};
\draw[line width=.6pt] (-1.5,2) node[draw,shape=circle,scale=.5] (TR) {};
\draw[line width=.6pt] (-.5,4.5) node[draw,shape=circle,scale=.5] (TT) {};
\draw (.5,-.5) node[font=\scriptsize] {...};
\draw[line width=.6pt] (L) -- ++(0,.75) -- ++(-1,1) -- (T);
\draw[line width=.6pt] (R) -- ++(0,.75) -- ++(1,1);
\draw[line width=.6pt] (TR) -- ++(0,.75) -- ++(1,1) -- (TT);
\draw[line width=.6pt] (T) -- ++(0,.75) -- ++(-1,1);
\draw[line width=.6pt] (RR) -- (TR);
\end{tikzpicture}
\;\oplus\;
\begin{tikzpicture}[baseline=15,x=.75em,y=.75em]
\draw[line width=.6pt] (-1.5,-.5) node[draw,shape=diamond,scale=.43] (L) {};
\draw[line width=.6pt] (.5,-.5)  node[draw,shape=diamond,scale=.43] (R) {};
\draw[line width=.6pt] (1.5,-.5)  node[draw,shape=diamond,scale=.43] (RR) {};
\draw[line width=.6pt] (-.5,2) node[draw,shape=circle,scale=.5] (T) {};
\draw[line width=.6pt] (1.5,2) node[draw,shape=circle,scale=.5] (TR) {};
\draw[line width=.6pt] (.5,4.5) node[draw,shape=circle,scale=.5] (TT) {};
\draw (-.5,-.5) node[font=\scriptsize] {...};
\draw[line width=.6pt] (L) -- ++(0,.75) -- ++(1,1) -- (T);
\draw[line width=.6pt] (R) -- ++(0,.75) -- ++(-1,1);
\draw[line width=.6pt] (TR) -- ++(0,.75) -- ++(-1,1) -- (TT);
\draw[line width=.6pt] (T) -- ++(0,.75) -- ++(1,1);
\draw[line width=.6pt] (RR) -- (TR);
\end{tikzpicture}
\;+
(-1)^m \;
\begin{tikzpicture}[baseline=15,x=.75em,y=.75em]
\draw[line width=.6pt] (1.5,-.5) node[draw,shape=diamond,scale=.43] (L) {};
\draw[line width=.6pt] (-.5,-.5)  node[draw,shape=diamond,scale=.43] (R) {};
\draw[line width=.6pt] (-1.5,-.5)  node[draw,shape=diamond,scale=.43] (RR) {};
\draw[line width=.6pt] (.5,2) node[draw,shape=circle,scale=.5] (T) {};
\draw[line width=.6pt] (-1.5,2) node[draw,shape=circle,scale=.5] (TR) {};
\draw[line width=.6pt] (-.5,4.5) node[draw,shape=circle,scale=.5] (TT) {};
\draw (.5,-.5) node[font=\scriptsize] {...};
\draw[line width=.6pt] (L) -- ++(0,.75) -- ++(-1,1) -- (T);
\draw[line width=.6pt] (R) -- ++(0,.75) -- ++(1,1);
\draw[line width=.6pt] (TR) -- ++(0,.75) -- ++(1,1) -- (TT);
\draw[line width=.6pt] (T) -- ++(0,.75) -- ++(-1,1);
\draw[line width=.6pt] (RR) -- (TR);
\end{tikzpicture}
\\
& \phantom{={}}
\;\oplus\;
\begin{tikzpicture}[baseline=15,x=.75em,y=.75em]
\draw[line width=.6pt] (-1.5,-.5) node[draw,shape=rectangle,scale=.6] (L) {};
\draw[line width=.6pt] (.5,-.5)  node[draw,shape=rectangle,scale=.6] (R) {};
\draw[line width=.6pt] (1.5,-.5)  node[draw,shape=diamond,scale=.43] (RR) {};
\draw[line width=.6pt] (-.5,2) node[draw,shape=circle,scale=.5] (T) {};
\draw[line width=.6pt] (1.5,2) node[draw,shape=circle,scale=.5] (TR) {};
\draw[line width=.6pt] (.5,4.5) node[draw,shape=circle,scale=.5] (TT) {};
\draw (-.5,-.5) node[font=\scriptsize] {...};
\draw[line width=.6pt] (L) -- ++(0,.75) -- ++(1,1) -- (T);
\draw[line width=.6pt] (R) -- ++(0,.75) -- ++(-1,1);
\draw[line width=.6pt] (TR) -- ++(0,.75) -- ++(-1,1) -- (TT);
\draw[line width=.6pt] (T) -- ++(0,.75) -- ++(1,1);
\draw[line width=.6pt] (RR) -- (TR);
\end{tikzpicture}
\;\oplus (-1)^m\;
\begin{tikzpicture}[baseline=15,x=.75em,y=.75em]
\draw[line width=.6pt] (1.5,-.5) node[draw,shape=rectangle,scale=.6] (L) {};
\draw[line width=.6pt] (-.5,-.5)  node[draw,shape=rectangle,scale=.6] (R) {};
\draw[line width=.6pt] (-1.5,-.5)  node[draw,shape=diamond,scale=.43] (RR) {};
\draw[line width=.6pt] (.5,2) node[draw,shape=circle,scale=.5] (T) {};
\draw[line width=.6pt] (-1.5,2) node[draw,shape=circle,scale=.5] (TR) {};
\draw[line width=.6pt] (-.5,4.5) node[draw,shape=circle,scale=.5] (TT) {};
\draw (.5,-.5) node[font=\scriptsize] {...};
\draw[line width=.6pt] (L) -- ++(0,.75) -- ++(-1,1) -- (T);
\draw[line width=.6pt] (R) -- ++(0,.75) -- ++(1,1);
\draw[line width=.6pt] (TR) -- ++(0,.75) -- ++(1,1) -- (TT);
\draw[line width=.6pt] (T) -- ++(0,.75) -- ++(-1,1);
\draw[line width=.6pt] (RR) -- (TR);
\end{tikzpicture}
\;\oplus\;
\begin{tikzpicture}[baseline=15,x=.75em,y=.75em]
\draw[line width=.6pt] (-1.5,-.5) node[draw,shape=diamond,scale=.43] (L) {};
\draw[line width=.6pt] (.5,-.5)  node[draw,shape=diamond,scale=.43] (R) {};
\draw[line width=.6pt] (1.5,-.5)  node[draw,shape=rectangle,scale=.6] (RR) {};
\draw[line width=.6pt] (-.5,2) node[draw,shape=circle,scale=.5] (T) {};
\draw[line width=.6pt] (1.5,2) node[draw,shape=circle,scale=.5] (TR) {};
\draw[line width=.6pt] (.5,4.5) node[draw,shape=circle,scale=.5] (TT) {};
\draw (-.5,-.5) node[font=\scriptsize] {...};
\draw[line width=.6pt] (L) -- ++(0,.75) -- ++(1,1) -- (T);
\draw[line width=.6pt] (R) -- ++(0,.75) -- ++(-1,1);
\draw[line width=.6pt] (TR) -- ++(0,.75) -- ++(-1,1) -- (TT);
\draw[line width=.6pt] (T) -- ++(0,.75) -- ++(1,1);
\draw[line width=.6pt] (RR) -- (TR);
\end{tikzpicture}
\;\oplus (-1)^m \;
\begin{tikzpicture}[baseline=15,x=.75em,y=.75em]
\draw[line width=.6pt] (1.5,-.5) node[draw,shape=diamond,scale=.43] (L) {};
\draw[line width=.6pt] (-.5,-.5)  node[draw,shape=diamond,scale=.43] (R) {};
\draw[line width=.6pt] (-1.5,-.5)  node[draw,shape=rectangle,scale=.6] (RR) {};
\draw[line width=.6pt] (.5,2) node[draw,shape=circle,scale=.5] (T) {};
\draw[line width=.6pt] (-1.5,2) node[draw,shape=circle,scale=.5] (TR) {};
\draw[line width=.6pt] (-.5,4.5) node[draw,shape=circle,scale=.5] (TT) {};
\draw (.5,-.5) node[font=\scriptsize] {...};
\draw[line width=.6pt] (L) -- ++(0,.75) -- ++(-1,1) -- (T);
\draw[line width=.6pt] (R) -- ++(0,.75) -- ++(1,1);
\draw[line width=.6pt] (TR) -- ++(0,.75) -- ++(1,1) -- (TT);
\draw[line width=.6pt] (T) -- ++(0,.75) -- ++(-1,1);
\draw[line width=.6pt] (RR) -- (TR);
\end{tikzpicture}
\end{align*}
The first line of $[M, a]$ and the second line of $[[M, a], \Phi]$ have mixed inputs and thus map to $0$ under $P$. Moreover, higher commutators with $\Phi$ vanish as $[[M, a], \Phi]$ has neither $A_U$ or $A_V$ outputs nor $A_W$ inputs ({\it i.e.}\ no \hair$\begin{tikzpicture}[baseline=-3,x=.75em,y=.75em] \draw[line width=.6pt] node[draw,shape=rectangle,scale=.6] {}; \end{tikzpicture}$\hspace{.05em} or
\hair$\begin{tikzpicture}[baseline=-3,x=.75em,y=.75em] \draw[line width=.6pt] node[draw,shape=diamond,scale=.43] {}; \end{tikzpicture}$\hspace{.05em} on the top, nor a
\hair$\begin{tikzpicture}[baseline=-3,x=.75em,y=.75em] \draw[line width=.6pt] node[draw,shape=circle,scale=.5] {}; \end{tikzpicture}$\hspace{.05em} on the bottom).

Thus
\begin{flalign*}
&& P_\Phi ([M, a]) &= 
(-1)^m \Bigg( \; \begin{tikzpicture}[baseline=15,x=.75em,y=.75em]
\draw[line width=.6pt] (1.5,-.5) node[draw,shape=rectangle,scale=.6] (L) {};
\draw[line width=.6pt] (-.5,-.5)  node[draw,shape=rectangle,scale=.6] (R) {};
\draw[line width=.6pt] (-1.5,-.5)  node[draw,shape=rectangle,scale=.6] (RR) {};
\draw[line width=.6pt] (.5,2) node[draw,shape=circle,scale=.5] (T) {};
\draw[line width=.6pt] (-1.5,2) node[draw,shape=circle,scale=.5] (TR) {};
\draw[line width=.6pt] (-.5,4.5) node[draw,shape=circle,scale=.5] (TT) {};
\draw (.5,-.5) node[font=\scriptsize] {...};
\draw[line width=.6pt] (L) -- ++(0,.75) -- ++(-1,1) -- (T);
\draw[line width=.6pt] (R) -- ++(0,.75) -- ++(1,1);
\draw[line width=.6pt] (TR) -- ++(0,.75) -- ++(1,1) -- (TT);
\draw[line width=.6pt] (T) -- ++(0,.75) -- ++(-1,1);
\draw[line width=.6pt] (RR) -- (TR);
\end{tikzpicture}
\;+
\sum_{i = 0}^m
(-1)^{i+1} \;
\begin{tikzpicture}[baseline=-3,x=.75em,y=.75em]
\draw[line width=.6pt] (-2,0) node[draw,shape=rectangle,scale=.6] (L) {};
\draw[line width=.6pt] (2,0)  node[draw,shape=rectangle,scale=.6] (R) {};
\draw[line width=.6pt] (0,0)  node[draw,shape=rectangle,scale=.6] (M) {};
\draw[line width=.6pt] (0,2.5) node[draw,shape=circle,scale=.5] (T) {};
\draw[line width=.6pt] (-.5,-2) node[draw,shape=rectangle,scale=.6] (BL) {};
\draw[line width=.6pt] (.5,-2) node[draw,shape=rectangle,scale=.6] (BR) {};
\draw (-1,0) node[font=\scriptsize] {...};
\draw (1,0) node[font=\scriptsize] {...};
\draw[line width=.6pt] (L) -- ++(0,.75) -- ++(2,1) -- (T);
\draw[line width=.6pt] (R) -- ++(0,.75) -- ++(-2,1);
\draw[line width=.6pt] (BL) -- ++(0,.75) -- ++(.5,.5) -- (M);
\draw[line width=.6pt] (BR) -- ++(0,.75) -- ++(-.5,.5);
\draw[line width=.6pt] (M) -- ++(0,.75) -- ++(0,1);
\end{tikzpicture}
\;+ (-1)^{m+2} \;
\begin{tikzpicture}[baseline=15,x=.75em,y=.75em]
\draw[line width=.6pt] (-1.5,-.5) node[draw,shape=rectangle,scale=.6] (L) {};
\draw[line width=.6pt] (.5,-.5)  node[draw,shape=rectangle,scale=.6] (R) {};
\draw[line width=.6pt] (1.5,-.5)  node[draw,shape=rectangle,scale=.6] (RR) {};
\draw[line width=.6pt] (-.5,2) node[draw,shape=circle,scale=.5] (T) {};
\draw[line width=.6pt] (1.5,2) node[draw,shape=circle,scale=.5] (TR) {};
\draw[line width=.6pt] (.5,4.5) node[draw,shape=circle,scale=.5] (TT) {};
\draw (-.5,-.5) node[font=\scriptsize] {...};
\draw[line width=.6pt] (L) -- ++(0,.75) -- ++(1,1) -- (T);
\draw[line width=.6pt] (R) -- ++(0,.75) -- ++(-1,1);
\draw[line width=.6pt] (TR) -- ++(0,.75) -- ++(-1,1) -- (TT);
\draw[line width=.6pt] (T) -- ++(0,.75) -- ++(1,1);
\draw[line width=.6pt] (RR) -- (TR);
\end{tikzpicture}
\;
\Bigg)
\\
&& & \phantom{={}}
\;\oplus
(-1)^m
\Bigg(
\;
\begin{tikzpicture}[baseline=15,x=.75em,y=.75em]
\draw[line width=.6pt] (1.5,-.5) node[draw,shape=diamond,scale=.43] (L) {};
\draw[line width=.6pt] (-.5,-.5)  node[draw,shape=diamond,scale=.43] (R) {};
\draw[line width=.6pt] (-1.5,-.5)  node[draw,shape=diamond,scale=.43] (RR) {};
\draw[line width=.6pt] (.5,2) node[draw,shape=circle,scale=.5] (T) {};
\draw[line width=.6pt] (-1.5,2) node[draw,shape=circle,scale=.5] (TR) {};
\draw[line width=.6pt] (-.5,4.5) node[draw,shape=circle,scale=.5] (TT) {};
\draw (.5,-.5) node[font=\scriptsize] {...};
\draw[line width=.6pt] (L) -- ++(0,.75) -- ++(-1,1) -- (T);
\draw[line width=.6pt] (R) -- ++(0,.75) -- ++(1,1);
\draw[line width=.6pt] (TR) -- ++(0,.75) -- ++(1,1) -- (TT);
\draw[line width=.6pt] (T) -- ++(0,.75) -- ++(-1,1);
\draw[line width=.6pt] (RR) -- (TR);
\end{tikzpicture}
\;+
\sum_{i = 0}^m
(-1)^{i+1}\;
\begin{tikzpicture}[baseline=-3,x=.75em,y=.75em]
\draw[line width=.6pt] (-2,0) node[draw,shape=diamond,scale=.43] (L) {};
\draw[line width=.6pt] (2,0)  node[draw,shape=diamond,scale=.43] (R) {};
\draw[line width=.6pt] (0,0)  node[draw,shape=diamond,scale=.43] (M) {};
\draw[line width=.6pt] (0,2.5) node[draw,shape=circle,scale=.5] (T) {};
\draw[line width=.6pt] (-.5,-2) node[draw,shape=diamond,scale=.43] (BL) {};
\draw[line width=.6pt] (.5,-2) node[draw,shape=diamond,scale=.43] (BR) {};
\draw (-1,0) node[font=\scriptsize] {...};
\draw (1,0) node[font=\scriptsize] {...};
\draw[line width=.6pt] (L) -- ++(0,.75) -- ++(2,1) -- (T);
\draw[line width=.6pt] (R) -- ++(0,.75) -- ++(-2,1);
\draw[line width=.6pt] (BL) -- ++(0,.75) -- ++(.5,.5) -- (M);
\draw[line width=.6pt] (BR) -- ++(0,.75) -- ++(-.5,.5);
\draw[line width=.6pt] (M) -- ++(0,.75) -- ++(0,1);
\end{tikzpicture}
\;+ (-1)^{m+2}\;
\begin{tikzpicture}[baseline=15,x=.75em,y=.75em]
\draw[line width=.6pt] (-1.5,-.5) node[draw,shape=diamond,scale=.43] (L) {};
\draw[line width=.6pt] (.5,-.5)  node[draw,shape=diamond,scale=.43] (R) {};
\draw[line width=.6pt] (1.5,-.5)  node[draw,shape=diamond,scale=.43] (RR) {};
\draw[line width=.6pt] (-.5,2) node[draw,shape=circle,scale=.5] (T) {};
\draw[line width=.6pt] (1.5,2) node[draw,shape=circle,scale=.5] (TR) {};
\draw[line width=.6pt] (.5,4.5) node[draw,shape=circle,scale=.5] (TT) {};
\draw (-.5,-.5) node[font=\scriptsize] {...};
\draw[line width=.6pt] (L) -- ++(0,.75) -- ++(1,1) -- (T);
\draw[line width=.6pt] (R) -- ++(0,.75) -- ++(-1,1);
\draw[line width=.6pt] (TR) -- ++(0,.75) -- ++(-1,1) -- (TT);
\draw[line width=.6pt] (T) -- ++(0,.75) -- ++(1,1);
\draw[line width=.6pt] (RR) -- (TR);
\end{tikzpicture}
\;
\Bigg)
\\
&& &= (-1)^m d_{\mathrm H} (a). && \qedhere
\end{flalign*}
\end{proof}

\begin{definition}
In light of Proposition \ref{unary}, we call the L$_\infty$ algebra $(\widetilde{\mathfrak g}[1] \oplus \mathfrak a)^{P_\Phi}_M$ given in Theorem \ref{maintheorem} parametrizing deformations of a diagram $\mathbb A$ of associative algebras on the category $U \tikzleftarrow W \tikzrightarrow V$ the {\it Gerstenhaber--Schack algebra} and denote it by $\mathfrak{gs} (\mathbb A)$.
\end{definition}

\begin{proposition}
\label{explicitmultibrackets}
The higher multibrackets of the Gerstenhaber--Schack algebra $\mathfrak{gs} (\mathbb A) = (\widetilde{\mathfrak g}[1] \oplus \mathfrak a)^{P_\Phi}_M$ may be given explicitly by the following formulae
\begin{alignat*}{5}
\lmb x[1], y[1] \rmb &= (-1)^{\lvert x \rvert + 1} [x, y][1] && \in \widetilde{\mathfrak g}[1] \\
\lmb x[1], a \rmb &= [x, a]^\Phi && \in \mathfrak a \\
\lmb x[1], a_1, a_2 \rmb &= \eqmakebox[xM0]{$x$} \circ (a_1 \otimes a_2) + \eqmakebox[xM0]{$x$} \circ (a_2 \otimes a_1) && \in \mathfrak a \\
\lmb a_1, a_2 \rmb &= \eqmakebox[xM0]{$M$} \circ (a_1 \otimes a_2) + \eqmakebox[xM0]{$M$} \circ (a_2 \otimes a_1) && \in \mathfrak a
\end{alignat*}
for homogeneous elements $x \in \widetilde{\mathfrak g}$ and $a, a_1, a_2 \in \mathfrak a$. A formula for $\lmb x[1], a_1, \dotsc, a_n \rmb \in \mathfrak a$ is given in the proof. Moreover,
\begin{align*}
\lmb x[1], a_1, \dotsc, a_n \rmb &= 0 \quad \text{if\, $n > \eqmakebox[xM3]{$\lvert x \rvert$} + 1$} \\
\lmb a_1, \dotsc, a_n \rmb &= 0 \quad \text{if\, $n > \eqmakebox[xM3]{$\lvert M \rvert$} + 1 = 2$.}
\end{align*}
\end{proposition}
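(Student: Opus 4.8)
The plan is to apply the second part of Voronov's Theorem~\ref{voronov} directly, reading off the multibracket formulae with the projection $P$ replaced throughout by the twisted projection $P_\Phi$, since the $L_\infty[1]$ algebra in question is $(\widetilde{\mathfrak g}[1]\oplus\mathfrak a)^{P_\Phi}_M$. This yields at once
\[
\lmb x[1], y[1]\rmb = (-1)^{|x|+1}[x,y][1], \qquad \lmb x[1], a_1, \dotsc, a_n\rmb = P_\Phi[...[[x, a_1], a_2], \dotsc, a_n],
\]
together with $\lmb a_1, \dotsc, a_n\rmb = P_\Phi[...[[M, a_1], a_2], \dotsc, a_n]$, so the first formula needs no further argument. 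The remaining task is to evaluate each projected iterated Gerstenhaber bracket, and for this I would reuse the operadic/graphical bookkeeping already set up in the proof of Proposition~\ref{unary}: represent each component of $x$, $M$, the $a_i$ and $\Phi$ by its input/output colours ($A_U$, $A_V$, $A_W$) and track which composites have homogeneous inputs and hence survive $P$.

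For the binary mixed bracket $\lmb x[1], a\rmb = P_\Phi[x,a]$ I would expand $P_\Phi = P\circ\exp[\blank,\Phi]$ and split the Gerstenhaber bracket $[x,a] = x\circ a - (-1)^{|x||a|}a\circ x$ into its two composites. Every component of $a$ has output in $A_W$, while among the components of $x$ only $x_W$ has inputs in $A_W$; hence $x\circ a$ can occur only through $x_W$, and inserting a single $a$ leaves the other inputs of $x_W$ of colour $A_W$, so the result has mixed inputs and is killed by $P$ \emph{unless} the remaining slots are filled by copies of $\Phi = (\phi,\psi)$ — which is exactly what the higher commutators $[[x,a],\Phi]$, $\tfrac12[[[x,a],\Phi],\Phi]$, $\dotsc$ supply. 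Summing these contributions reproduces $x\circ^\Phi a$ of Definition~\ref{extendedgerstenhaber}. The other composite $a\circ x$ occurs through $x_U$ and $x_V$, whose outputs match the inputs of $a$, and already has homogeneous inputs; it therefore survives $P$ with no $\Phi$-filling, and the higher commutators contribute nothing. This gives $\lmb x[1], a\rmb = x\circ^\Phi a - (-1)^{|x||a|}a\circ x = [x,a]^\Phi$.

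The higher brackets are handled by the same colour- and slot-counting argument applied to the iterated bracket. Each $a_i$ carries an $A_W$ output and must be plugged into an $A_W$ input, so following only the configurations that survive $P$ at the end, all of the $a_i$ together with enough copies of $\Phi$ must be inserted into the single $A_W$-valued component: $x_W$ (of arity $|x|+1$) for $\lmb x[1], a_1,\dotsc,a_n\rmb$, and $\xi$ (of arity $|M|+1 = 2$) for $\lmb a_1,\dotsc,a_n\rmb$. Since inserting $n$ distinct $a_i$ requires $n$ input slots, there is nothing left to insert once $n$ exceeds the arity, which gives the two vanishing statements $\lmb x[1], a_1, \dotsc, a_n\rmb = 0$ for $n > |x|+1$ and $\lmb a_1, \dotsc, a_n\rmb = 0$ for $n > 2$. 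When $n$ does not exceed the arity the surviving term is the signed sum over all placements of $a_1, \dotsc, a_n$ into the slots with the unused slots filled by $\Phi$; this is the general formula to be recorded in the proof. For $n=2$ it specialises to $x\circ(a_1\otimes a_2) + x\circ(a_2\otimes a_1)$ and to $M\circ(a_1\otimes a_2) + M\circ(a_2\otimes a_1)$, the two summands arising because, inserting $a_1$ before $a_2$, the second insertion may land either before or after the first slot (for $\lmb a_1, a_2\rmb$ the arity is exactly $2$, so no $\Phi$-filling occurs and the formula is literal; the $n=1$ cases recover the unary differential already computed in Proposition~\ref{unary}).

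The only genuine work is the bookkeeping, and I expect the delicate point to be the signs rather than the colour matching, which makes the vanishing and the identification of the surviving composites essentially automatic. One must combine the Koszul signs produced by the degree shift $[1]$, the signs $(-1)^{ni}$ built into the Gerstenhaber composition $\circ$, and the factors $1/k!$ coming from $\exp[\blank,\Phi]$, and verify that after summation they assemble into the stated symmetrised composites. This is precisely the type of computation carried out term by term for the unary bracket in Proposition~\ref{unary}, and the same graphical calculus — now with several insertion points instead of one — reduces it to a finite, if somewhat intricate, sign count.
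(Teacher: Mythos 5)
Your proposal is correct and follows essentially the same route as the paper: the paper's proof likewise reads the multibrackets off Voronov's theorem with projection $P_\Phi$, expands $P_\Phi$ applied to the iterated Gerstenhaber brackets, and identifies the surviving composites by matching input/output types (presented graphically rather than in prose), with the general formula being exactly your signed sum over placements of the $a_i$ into the slots of $x_W$ (respectively $\xi$) with the remaining slots filled by $\Phi$, and the vanishing following from the arity count. The sign bookkeeping you defer is handled in the paper only to the same extent --- explicit $(-1)^{mi}$, $(-1)^{ni}$ factors in the binary case and a Koszul sign $\epsilon(s)$ in the general formula --- so nothing essential is missing.
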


\begin{proof}
Let $\Phi, x, a$ be as in (\ref{MPhixa}). The higher brackets are calculated similarly, giving
\begin{align*}
P_\Phi ([x, a]) &=
\sum_{i=0}^n (-1)^{mi}\;
\begin{tikzpicture}[baseline=-3,x=.75em,y=.75em]
\draw[line width=.6pt] (-2,0) node[draw,shape=circle,scale=.5] (L) {};
\draw[line width=.6pt] (2,0)  node[draw,shape=circle,scale=.5] (R) {};
\draw[line width=.6pt] (0,0)  node[draw,shape=circle,scale=.5] (M) {};
\draw[line width=.6pt] (0,2.5) node[draw,shape=circle,scale=.5] (T) {};
\draw[line width=.6pt] (-1,-2.5) node[draw,shape=rectangle,scale=.6] (BL) {};
\draw[line width=.6pt] (1,-2.5) node[draw,shape=rectangle,scale=.6] (BR) {};
\draw[line width=.6pt] (-2,-2.5) node[draw,shape=rectangle,scale=.6] (BLL) {};
\draw[line width=.6pt] (2,-2.5) node[draw,shape=rectangle,scale=.6] (BRR) {};
\draw (-1,0) node[font=\scriptsize] {...};
\draw (1,0) node[font=\scriptsize] {...};
\draw (0,-2.5) node[font=\scriptsize] {...};
\draw[line width=.6pt] (L) -- ++(0,.75) -- ++(2,1) -- (T);
\draw[line width=.6pt] (R) -- ++(0,.75) -- ++(-2,1);
\draw[line width=.6pt] (BL) -- ++(0,.75) -- ++(1,1) -- (M);
\draw[line width=.6pt] (BR) -- ++(0,.75) -- ++(-1,1);
\draw[line width=.6pt] (M) -- ++(0,.75) -- ++(0,1);
\draw[line width=.6pt] (BLL) -- (L);
\draw[line width=.6pt] (BRR) -- (R);
\end{tikzpicture}
\;
- (-1)^{mn} \sum_{i=0}^m (-1)^{ni}\;
\begin{tikzpicture}[baseline=-3,x=.75em,y=.75em]
\draw[line width=.6pt] (-2,0) node[draw,shape=rectangle,scale=.6] (L) {};
\draw[line width=.6pt] (2,0)  node[draw,shape=rectangle,scale=.6] (R) {};
\draw[line width=.6pt] (0,0)  node[draw,shape=rectangle,scale=.6] (M) {};
\draw[line width=.6pt] (0,2.5) node[draw,shape=circle,scale=.5] (T) {};
\draw[line width=.6pt] (-1,-2.5) node[draw,shape=rectangle,scale=.6] (BL) {};
\draw[line width=.6pt] (1,-2.5) node[draw,shape=rectangle,scale=.6] (BR) {};
\draw (-1,0) node[font=\scriptsize] {...};
\draw (1,0) node[font=\scriptsize] {...};
\draw (0,-2.5) node[font=\scriptsize] {...};
\draw[line width=.6pt] (L) -- ++(0,.75) -- ++(2,1) -- (T);
\draw[line width=.6pt] (R) -- ++(0,.75) -- ++(-2,1);
\draw[line width=.6pt] (BL) -- ++(0,.75) -- ++(1,1) -- (M);
\draw[line width=.6pt] (BR) -- ++(0,.75) -- ++(-1,1);
\draw[line width=.6pt] (M) -- ++(0,.75) -- ++(0,1);
\end{tikzpicture}
\\
&\phantom{={}}
\;\oplus
\sum_{i=0}^n (-1)^{mi}\;
\begin{tikzpicture}[baseline=-3,x=.75em,y=.75em]
\draw[line width=.6pt] (-2,0) node[draw,shape=circle,scale=.5] (L) {};
\draw[line width=.6pt] (2,0)  node[draw,shape=circle,scale=.5] (R) {};
\draw[line width=.6pt] (0,0)  node[draw,shape=circle,scale=.5] (M) {};
\draw[line width=.6pt] (0,2.5) node[draw,shape=circle,scale=.5] (T) {};
\draw[line width=.6pt] (-1,-2.5) node[draw,shape=diamond,scale=.43] (BL) {};
\draw[line width=.6pt] (1,-2.5) node[draw,shape=diamond,scale=.43] (BR) {};
\draw[line width=.6pt] (-2,-2.5) node[draw,shape=diamond,scale=.43] (BLL) {};
\draw[line width=.6pt] (2,-2.5) node[draw,shape=diamond,scale=.43] (BRR) {};
\draw (-1,0) node[font=\scriptsize] {...};
\draw (1,0) node[font=\scriptsize] {...};
\draw (0,-2.5) node[font=\scriptsize] {...};
\draw[line width=.6pt] (L) -- ++(0,.75) -- ++(2,1) -- (T);
\draw[line width=.6pt] (R) -- ++(0,.75) -- ++(-2,1);
\draw[line width=.6pt] (BL) -- ++(0,.75) -- ++(1,1) -- (M);
\draw[line width=.6pt] (BR) -- ++(0,.75) -- ++(-1,1);
\draw[line width=.6pt] (M) -- ++(0,.75) -- ++(0,1);
\draw[line width=.6pt] (BLL) -- (L);
\draw[line width=.6pt] (BRR) -- (R);
\end{tikzpicture}
\;
- (-1)^{mn} \sum_{i=0}^m (-1)^{ni}\;
\begin{tikzpicture}[baseline=-3,x=.75em,y=.75em]
\draw[line width=.6pt] (-2,0) node[draw,shape=diamond,scale=.43] (L) {};
\draw[line width=.6pt] (2,0)  node[draw,shape=diamond,scale=.43] (R) {};
\draw[line width=.6pt] (0,0)  node[draw,shape=diamond,scale=.43] (M) {};
\draw[line width=.6pt] (0,2.5) node[draw,shape=circle,scale=.5] (T) {};
\draw[line width=.6pt] (-1,-2.5) node[draw,shape=diamond,scale=.43] (BL) {};
\draw[line width=.6pt] (1,-2.5) node[draw,shape=diamond,scale=.43] (BR) {};
\draw (-1,0) node[font=\scriptsize] {...};
\draw (1,0) node[font=\scriptsize] {...};
\draw (0,-2.5) node[font=\scriptsize] {...};
\draw[line width=.6pt] (L) -- ++(0,.75) -- ++(2,1) -- (T);
\draw[line width=.6pt] (R) -- ++(0,.75) -- ++(-2,1);
\draw[line width=.6pt] (BL) -- ++(0,.75) -- ++(1,1) -- (M);
\draw[line width=.6pt] (BR) -- ++(0,.75) -- ++(-1,1);
\draw[line width=.6pt] (M) -- ++(0,.75) -- ++(0,1);
\end{tikzpicture}
\\
&= x \circ^\Phi a - (-1)^{mn} a \circ x.
\end{align*}
Note that this is essentially the Gerstenhaber bracket, replacing the identity by $\Phi$ wherever necessary, see Definition \ref{extendedgerstenhaber}.

Finally, $\lmb x[1], a_1, \dotsc, a_n \rmb = P_\Phi [...[[x, a_1], a_2], \dotsc, a_n]$ is obtained by plugging the outputs of $a_1, \dotsc, a_n$ into $n$ different inputs of $x$.
\begin{align*}
P_\Phi ([...[[x, a_1], a_2], \dotsc, a_n]) &=
\sum_I
\sum_{s \in \mathfrak S_I}
\epsilon (s) \,
\begin{tikzpicture}[baseline=20,x=.75em,y=.75em]
\draw[line width=.6pt] (-8,0) node[draw,shape=rectangle,scale=.6] (B1) {};
\draw[line width=.6pt] (-6,0) node[draw,shape=rectangle,scale=.6] (B2) {};
\draw[line width=.6pt] (-4,0) node[draw,shape=rectangle,scale=.6] (B3) {};
\draw[line width=.6pt] (-2,0) node[draw,shape=rectangle,scale=.6] (B4) {};
\draw[line width=.6pt] (0,0)  node[draw,shape=rectangle,scale=.6] (B5) {};
\draw[line width=.6pt] (2,0)  node[draw,shape=rectangle,scale=.6] (B6) {};
\draw[line width=.6pt] (4,0)  node[draw,shape=rectangle,scale=.6] (B7) {};
\draw[line width=.6pt] (6,0)  node[draw,shape=rectangle,scale=.6] (B8) {};
\draw[line width=.6pt] (8,0)  node[draw,shape=rectangle,scale=.6] (B9) {};
\draw[line width=.6pt] (-8,2.5) node[draw,shape=circle,scale=.5] (M1) {};
\draw[line width=.6pt] (-5,2.5) node[draw,shape=circle,scale=.5] (M2) {};
\draw[line width=.6pt] (-2,2.5) node[draw,shape=circle,scale=.5] (M3) {};
\draw[line width=.6pt] (1,2.5)  node[draw,shape=circle,scale=.5] (M4) {};
\draw[line width=.6pt] (5,2.5)  node[draw,shape=circle,scale=.5] (M5) {};
\draw[line width=.6pt] (8,2.5)  node[draw,shape=circle,scale=.5] (M6) {};
\draw[line width=.6pt] (0,6)  node[draw,shape=circle,scale=.5] (T) {};
\draw (-7,0) node[font=\scriptsize] {...};
\draw (-5,0) node[font=\scriptsize] {...};
\draw (-3,0) node[font=\scriptsize] {...};
\draw (-1,0) node[font=\scriptsize] {...};
\draw (1,0)  node[font=\scriptsize] {...};
\draw (3,0)  node[font=\scriptsize] {...};
\draw (5,0)  node[font=\scriptsize] {...};
\draw (7,0)  node[font=\scriptsize] {...};
\draw (-6.5,2.5) node[font=\scriptsize] {........};
\draw (-3.5,2.5) node[font=\scriptsize] {........};
\draw (-0.5,2.5) node[font=\scriptsize] {........};
\draw (3,2.5) node[font=\scriptsize] {............};
\draw (6.5,2.5) node[font=\scriptsize] {........};
\draw (-4.9,-0.75) node[font=\scriptsize] {$a_{s(1)}$};
\draw (1.1,-0.75)  node[font=\scriptsize] {$a_{s(2)}$};
\draw (5.1,-0.75)  node[font=\scriptsize] {$a_{s(n)}$};
\draw[line width=.6pt] (B1) -- (M1);
\draw[line width=.6pt] (B2) -- ++(0,.75) -- ++(1,1) -- (M2);
\draw[line width=.6pt] (B3) -- ++(0,.75) -- ++(-1,1);
\draw[line width=.6pt] (B4) -- (M3);
\draw[line width=.6pt] (B5) -- ++(0,.75) -- ++(1,1) -- (M4);
\draw[line width=.6pt] (B6) -- ++(0,.75) -- ++(-1,1);
\draw[line width=.6pt] (B7) -- ++(0,.75) -- ++(1,1) -- (M5);
\draw[line width=.6pt] (B8) -- ++(0,.75) -- ++(-1,1);
\draw[line width=.6pt] (B9) -- (M6);
\draw[line width=.6pt] (M1) -- ++(0,.75) -- ++(8,2) -- (T);
\draw[line width=.6pt] (M2) -- ++(0,.75) -- ++(5,2);
\draw[line width=.6pt] (M3) -- ++(0,.75) -- ++(2,2);
\draw[line width=.6pt] (M4) -- ++(0,.75) -- ++(-1,2);
\draw[line width=.6pt] (M5) -- ++(0,.75) -- ++(-5,2);
\draw[line width=.6pt] (M6) -- ++(0,.75) -- ++(-8,2);
\end{tikzpicture}
\\
& \phantom{={}}
\;\oplus
\sum_I
\sum_{s \in \mathfrak S_I}
\epsilon (s) \,
\begin{tikzpicture}[baseline=20,x=.75em,y=.75em]
\draw[line width=.6pt] (-8,0) node[draw,shape=diamond,scale=.43] (B1) {};
\draw[line width=.6pt] (-6,0) node[draw,shape=diamond,scale=.43] (B2) {};
\draw[line width=.6pt] (-4,0) node[draw,shape=diamond,scale=.43] (B3) {};
\draw[line width=.6pt] (-2,0) node[draw,shape=diamond,scale=.43] (B4) {};
\draw[line width=.6pt] (0,0)  node[draw,shape=diamond,scale=.43] (B5) {};
\draw[line width=.6pt] (2,0)  node[draw,shape=diamond,scale=.43] (B6) {};
\draw[line width=.6pt] (4,0)  node[draw,shape=diamond,scale=.43] (B7) {};
\draw[line width=.6pt] (6,0)  node[draw,shape=diamond,scale=.43] (B8) {};
\draw[line width=.6pt] (8,0)  node[draw,shape=diamond,scale=.43] (B9) {};
\draw[line width=.6pt] (-8,2.5) node[draw,shape=circle,scale=.5] (M1) {};
\draw[line width=.6pt] (-5,2.5) node[draw,shape=circle,scale=.5] (M2) {};
\draw[line width=.6pt] (-2,2.5) node[draw,shape=circle,scale=.5] (M3) {};
\draw[line width=.6pt] (1,2.5)  node[draw,shape=circle,scale=.5] (M4) {};
\draw[line width=.6pt] (5,2.5)  node[draw,shape=circle,scale=.5] (M5) {};
\draw[line width=.6pt] (8,2.5)  node[draw,shape=circle,scale=.5] (M6) {};
\draw[line width=.6pt] (0,6)  node[draw,shape=circle,scale=.5] (T) {};
\draw (-7,0) node[font=\scriptsize] {...};
\draw (-5,0) node[font=\scriptsize] {...};
\draw (-3,0) node[font=\scriptsize] {...};
\draw (-1,0) node[font=\scriptsize] {...};
\draw (1,0)  node[font=\scriptsize] {...};
\draw (3,0)  node[font=\scriptsize] {...};
\draw (5,0)  node[font=\scriptsize] {...};
\draw (7,0)  node[font=\scriptsize] {...};
\draw (-6.5,2.5) node[font=\scriptsize] {........};
\draw (-3.5,2.5) node[font=\scriptsize] {........};
\draw (-0.5,2.5) node[font=\scriptsize] {........};
\draw (3,2.5) node[font=\scriptsize] {............};
\draw (6.5,2.5) node[font=\scriptsize] {........};
\draw (-4.9,-0.75) node[font=\scriptsize] {$a_{s(1)}$};
\draw (1.1,-0.75)  node[font=\scriptsize] {$a_{s(2)}$};
\draw (5.1,-0.75)  node[font=\scriptsize] {$a_{s(n)}$};
\draw[line width=.6pt] (B1) -- (M1);
\draw[line width=.6pt] (B2) -- ++(0,.75) -- ++(1,1) -- (M2);
\draw[line width=.6pt] (B3) -- ++(0,.75) -- ++(-1,1);
\draw[line width=.6pt] (B4) -- (M3);
\draw[line width=.6pt] (B5) -- ++(0,.75) -- ++(1,1) -- (M4);
\draw[line width=.6pt] (B6) -- ++(0,.75) -- ++(-1,1);
\draw[line width=.6pt] (B7) -- ++(0,.75) -- ++(1,1) -- (M5);
\draw[line width=.6pt] (B8) -- ++(0,.75) -- ++(-1,1);
\draw[line width=.6pt] (B9) -- (M6);
\draw[line width=.6pt] (M1) -- ++(0,.75) -- ++(8,2) -- (T);
\draw[line width=.6pt] (M2) -- ++(0,.75) -- ++(5,2);
\draw[line width=.6pt] (M3) -- ++(0,.75) -- ++(2,2);
\draw[line width=.6pt] (M4) -- ++(0,.75) -- ++(-1,2);
\draw[line width=.6pt] (M5) -- ++(0,.75) -- ++(-5,2);
\draw[line width=.6pt] (M6) -- ++(0,.75) -- ++(-8,2);
\end{tikzpicture}
\end{align*}
where we sum over subsets $I \subset \{ 1, \dotsc, m \}$ of length $n$ and $\mathfrak S_I$ denotes the permutation group of $I$. (Here, $\epsilon (s)$ is the Koszul sign as in Definition \ref{linfinityalgebra}.)

For $x \in \widetilde{\mathfrak g}^1$ and $a_1, a_2 \in \mathfrak a^0$
\begin{flalign*}
&& \lmb x, a_1, a_2 \rmb &= P_\Phi ([[\eqmakebox[xM1]{$x,{}$} {} a_1], a_2]) = \eqmakebox[xM2]{$x$} \circ (a_1 \otimes a_2) + \eqmakebox[xM2]{$x$} \circ (a_2 \otimes a_1) \notag\\
&&    \lmb a_1, a_2 \rmb &= P_\Phi ([[\eqmakebox[xM1]{$M,{}$} {} a_1], a_2]) = \eqmakebox[xM2]{$M$} \circ (a_1 \otimes a_2) + \eqmakebox[xM2]{$M$} \circ (a_2 \otimes a_1). && \qedhere
\end{flalign*}
\end{proof}

In \S \ref{deformationsofcoh} we consider applications to algebraic geometry by considering a diagram of the form $\mathbb A = \mathcal O_X \vert_{\mathfrak U}$, describing deformations of the Abelian category $\Coh (X)$.

\subsection{Obstruction theory via L$_\infty$ algebras}
\label{obstructiontheorylinfinity}

Let $\mathbb A$ be a diagram of associative algebras on $\mathfrak U = \big( U \leftarrow W \tikzrightarrow V \big)$ and $\mathbb A \llrr{\epsilon}$ be defined by
\[
\mathbb A \llrr{\epsilon} (U) = \mathbb A (U) \hatotimes \mathbb k \llrr{\epsilon}.
\]
If $\mu, \nu, \xi$ denote the associative multiplications on $\mathbb A (U), \mathbb A (V), \mathbb A (W)$, respectively, let $\mu_0, \nu_0, \xi_0$ denote their obvious extensions to $\mathbb A \llrr{\epsilon} (U), \mathbb A \llrr{\epsilon} (V), \mathbb A \llrr{\epsilon} (W)$, respectively.

Similarly, let $\phi \colon \mathbb A (U) \tikzto \mathbb A (W)$ and $\psi \colon \mathbb A (V) \tikzto \mathbb A (W)$ be the images of the morphisms in $\mathfrak U$ under $\mathbb A$, which extend to morphisms $\phi_0 \colon \mathbb A \llrr{\epsilon} (U) \tikzto \mathbb A \llrr{\epsilon} (W)$ and $\psi_0 \colon \mathbb A \llrr{\epsilon} (V) \tikzto \mathbb A \llrr{\epsilon} (W)$.

Now set
\begin{align*}
M = \eqmakebox[MPhi0]{$M_0$} &= \mu_0 \oplus \nu_0 \oplus \xi_0 \\
\Phi = \eqmakebox[MPhi0]{$\Phi_0$} &= \phi_0 \oplus \psi_0
\end{align*}
so that $M[1] \oplus \Phi \in \mathfrak{gs} (\mathbb A \llrr{\epsilon})$ of degree $0$.

A formal deformation of $\mathbb A$ is given by $M + \widetilde M$ and $\Phi + \widetilde \Phi$, where $\widetilde M = \sum_{n \geq 1} M_n \epsilon^n$ and $\widetilde \Phi = \sum_{n \geq 1} \Phi_n \epsilon^n$ for
\begin{align*}
M_n &= \mu_n \oplus \nu_n \oplus \xi_n \\
\Phi_n &= \phi_n \oplus \psi_n
\end{align*}
with $\widetilde M[1] \oplus \widetilde \Phi$ of degree $0$.

By Proposition \ref{equivalent}, $M + \widetilde M$ and $\Phi + \widetilde \Phi$ are compatible precisely when $\widetilde M[1] \oplus \widetilde \Phi \in \MC (\mathfrak{gs} (\mathbb A) \hatotimes \mathfrak m)$ where $\mathfrak m = (\epsilon)$ is the maximal ideal of $\mathbb k \llrr{\epsilon}$, {\it i.e.}\ when
\[
\sum_{n \geq 0} \frac{\big( \widetilde M[1] \oplus \widetilde \Phi \big)^{\lmb n \rmb}}{n!} = 0
\]
which is equivalent to
\begin{align}
d_{\mathrm H} \widetilde M + \tfrac12 [\widetilde M, \widetilde M] &= 0 \, \in \widetilde{\mathfrak g}^1 \label{maurercartanzk1} \\
d_{\mathrm H} \widetilde \Phi + d_\Delta \widetilde M [1] + \lmb \widetilde M[1], \widetilde \Phi \rmb + \tfrac12 \lmb \widetilde \Phi, \widetilde \Phi \rmb + \tfrac12 \lmb \widetilde M[1], \widetilde \Phi, \widetilde \Phi \rmb &= 0 \, \in \mathfrak a^0. \label{maurercartanzk2}
\end{align}
Using the explicit formulae given in Proposition \ref{explicitmultibrackets}, we may rewrite the brackets in (\ref{maurercartanzk2}) as follows
\begin{align*}
d_\Delta (\widetilde M [1]) &= \widetilde M \circ (\Phi \otimes \Phi) - \Phi \circ \widetilde M \\
d_{\mathrm H} \widetilde \Phi &= M \circ (\Phi \otimes \widetilde \Phi) - \widetilde \Phi \circ M + M \circ (\widetilde \Phi \otimes \Phi) \\
\lmb \widetilde M [1], \widetilde \Phi \rmb &= \widetilde M \circ (\widetilde \Phi \otimes \Phi) + \widetilde M \circ (\Phi \otimes \widetilde \Phi) - \widetilde \Phi \circ \widetilde M \\
\tfrac12 \lmb \widetilde \Phi, \widetilde \Phi \rmb &= M \circ (\widetilde \Phi \otimes \widetilde \Phi) \\
\tfrac12 \lmb \widetilde M[1], \widetilde \Phi, \widetilde \Phi \rmb &= \widetilde M \circ (\widetilde \Phi \otimes \widetilde \Phi).
\end{align*}
Thus, we have that $\widetilde M[1] \oplus \widetilde \Phi \in \MC (\mathfrak{gs} (\mathbb A))$ precisely when ({\it i}\hair) $M + \widetilde M$ is a triple of associative multiplications which is equivalent to (\ref{maurercartanzk1}), and ({\it ii}\hair)
\begin{equation}
\label{obstructiontilde}
\begin{aligned}
\widetilde M \circ (\Phi \otimes \Phi) &{} + M \circ (\Phi \otimes \widetilde \Phi) + M \circ (\widetilde \Phi \otimes \Phi) \\
&{} + \widetilde M \circ (\widetilde \Phi \otimes \Phi) + \widetilde M \circ (\Phi \otimes \widetilde \Phi) \\
&{} + M \circ (\widetilde \Phi \otimes \widetilde \Phi) + \widetilde M \circ (\widetilde \Phi \otimes \widetilde \Phi) = \Phi \circ \widetilde M + \widetilde \Phi \circ M + \widetilde \Phi \circ \widetilde M
\end{aligned}
\end{equation}
which is equivalent to (\ref{maurercartanzk2}).

Moreover, collecting powers of $\epsilon$, we can rewrite (\ref{obstructiontilde}) as
\[
\sum_{i+j = n} \Phi_i \circ M_j = \sum_{i+j+k = n} M_i \circ (\Phi_j \otimes \Phi_k).
\]

\begin{remark}
Note that, unlike the Maurer--Cartan equation for a \textsc{dg} Lie algebra, the Maurer--Cartan equation for $\mathfrak{gs} (\mathbb A)$ contains a ternary bracket (\ref{maurercartanzk2}).
\end{remark}

\section{Applications to geometry}
\label{deformationsofcoh}

In this final section we wish to use the L$_\infty$ structure on the Gerstenhaber--Schack complex constructed in \S \ref{linfinitygerstenhaberschack} to study the deformation theory of the diagram $\mathcal O_X \vert_{\mathfrak U}$ for a (smooth) variety $X$, which by \cite{lowenvandenbergh1} is equivalent to the deformation theory of the Abelian category of (quasi)coherent sheaves $\Coh (X)$ or $\QCoh (X)$ (see also Lowen \cite[Thm.\ 1.4]{lowen}). As we made assumptions on the shape of the diagram, we first determine what varieties admit a cover by two acyclic opens. Since we have in mind applications to smooth complex surfaces, we state the applications for this case.

\paragraph{Curves.} 

Since a smooth projective curve $C$ minus finitely many points is affine --- hence acyclic by Serre's criterion \cite[Thm.\ III.3.7]{hartshorne} --- $C$ can be covered by two affine open sets (the complements of two distinct points on $C$).

Since also $\extprod^i \mathcal T_C = 0$ for $i > 1$ we have that deformations of $\mathcal O_C \vert_{\mathfrak U}$ are parametrized by
\begin{equation}
\label{hhcurves}
\H_{\mathrm{GS}}^2 (C) \simeq \HH^2 (C) \simeq \H^1 (C, \mathcal T_C)
\end{equation}
and all obstructions vanish. It is well known that
\begin{equation}
\label{hcurves}
\dim \H^1 (C, \mathcal T_C) =
\begin{cases}
0 & \text{genus $0$} \\
1 & \text{genus $1$} \\
3g-3 & \text{genus $g \geq 2$.}
\end{cases}
\end{equation}
and deformations of $\mathcal O_C \vert_{\mathfrak U}$ capture precisely deformations of the complex structure of $X$.

\paragraph{Surfaces and higher dimensions.}

First note that if a smooth scheme admits an open cover $\mathfrak U = \{ U, V \}$ of {\it two} affine opens with affine intersection, Leray's theorem (see for example \cite{griffithsharris}, Ch.~0, \S 3) states that the sheaf cohomology of $X$ can be calculated as the Čech cohomology of the open cover $\mathfrak U$, whence $\H^i (X, \mathcal F) \simeq \check \H^i (\mathfrak U, \mathcal F \vert_{\mathfrak U}) = 0$ for every $i \geq 2$ and every coherent sheaf $\mathcal F$.

Yet, if $X$ is smooth projective of $\dim_{\mathbb C} = n \geq 2$, Serre duality gives $\H^n (X, \omega) \simeq \H^0 (X, \mathcal O_X)^* \simeq \mathbb C \neq 0$. Thus $X$ cannot be covered by two acyclic open sets and applications in complex dimension $\geq 2$ are thus limited to noncompact spaces.

\begin{remark}
At least with respect to the original motivation of developing a tool for studying noncommutative instantons on complex surfaces this should not necessarily be seen as a drawback, as theories of instantons are often defined over {\it noncompact} spaces. Indeed, in the context of the instanton partition function defined by Nekrasov \cite{nekrasov} and explored by various authors (see for example \cite{nakajimayoshioka,bravermanetingof,nekrasovokounkov,gasparimliu}), the noncompactness of the underlying surface is essential for the nontriviality of the theory.
\end{remark}

A broad class of smooth complex varieties of dimension $\geq 2$ which can be covered by two affine open sets is given by the varieties $Z = \Tot \mathcal E$ for $\mathcal E$ an algebraic vector bundle over a smooth projective curve $C$ (of any genus).

For such $Z$, we thus get that deformations of $\Coh (Z)$ are parametrized by
\begin{align*}
\H^2_{\mathrm{GS}} (\mathcal O_Z \vert_{\mathfrak U}) &\simeq \HH^2 (Z) \simeq \H^0 (Z, \extprod^2 \mathcal T_Z) \oplus \H^1 (Z, \mathcal T_Z) \\
\intertext{with obstructions in}
\H^3_{\mathrm{GS}} (\mathcal O_Z \vert_{\mathfrak U}) &\simeq \HH^3 (Z) \simeq \H^0 (Z, \extprod^3 \mathcal T_Z) \oplus \H^1 (Z, \extprod^2 \mathcal T_Z).
\end{align*}
In particular, if $\mathcal E$ is a line bundle, then $Z = \Tot \mathcal E$ is a surface covered by two acyclic open sets. But then $\extprod^3 \mathcal T_X = 0$ and $\H^1 (X, \extprod^2 \mathcal T_X)$ is its only obstruction space.

\begin{remark}
For $Z = \Tot \mathcal E$, the cohomology groups may be infinite-dimensional over $\mathbb C$. This can be avoided by considering the $n$th formal neighbourhood of $C$ inside $Z$, {\it i.e.}\ by considering the reduced scheme with structure sheaf $\mathcal O_Z / \mathcal I_C^{n+1}$, where $\mathcal I_C$ is the ideal sheaf of $C$ in $Z$.
\end{remark}

In the introduction we mentioned that a deformation of $\Coh (X)$ or $\mathcal O_X \vert_{\mathfrak U}$ can be thought of as a simultaneous deformation quantization of $\mathcal O_X$ and deformation of the complex structure of $X$. We first explain how these can be reformulated in terms of deformations of diagrams. Throughout we will illustrate the theory by means of the noncompact surfaces $Z_k := \Tot \mathcal O_{\mathbb P^1} (-k)$ for $k \geq 1$, which admit both classical and noncommutative deformations for $k \geq 2$.

\subsection{Deformation quantization}
\label{deformationquantization}

Let $X$ be a smooth complex algebraic variety and let $\eta \in \H^0 (X, \extprod^2 \mathcal T_X)$ be a global bivector field with vanishing Schouten--Nijenhuis bracket $[\eta, \eta] \in \H^0 (X, \extprod^3 \mathcal T_X)$. Then $\eta$ defines a holomorphic Poisson structure $\{ \blank {,} \blank \}_\eta$ by
\[
\{ f, g \}_\eta = \langle \eta, \mathrm d f \wedge \mathrm d g \rangle,
\]
where $\mathrm d$ denotes the exterior derivative and $\langle \blank {,} \blank \rangle$ the pairing between vector fields and forms.

A {\it star product} on a complex variety $X$ is a $\mathbb C \llrr{\hbar}$-bilinear associative multiplication
\[
\star \colon \mathcal O_X \llrr{\hbar} \times \mathcal O_X \llrr{\hbar} \tikzto \mathcal O_X \llrr{\hbar}
\]
mapping
\[
(f, g) \tikzmapsto f \star g = fg + \hbar B_1 (f, g) + \hbar^2 B_2 (f, g) + \dotsb
\]
where $B_n$ are bidifferential operators. (Here $\hbar$ is a formal parameter, which we call $\epsilon$ elsewhere.)

A {\it deformation quantization} of a complex Poisson variety $(X, \eta)$ is a star product on $X$ with first term $B_1 (f, g) = \{ f, g \}_\eta$.

A deformation quantization of $X$ is thus given as a collection of bilinear maps $(B_n)_{n \geq 1} = (B_n^U)_{n \geq 1}$ for each open set $U$, deforming the commutative product of functions on $U$.

The condition that the star product on the individual open sets is compatible with the algebra maps $\phi_0 \colon \mathcal O \llrr{\hbar} (U) \tikzto \mathcal O_X \llrr{\hbar} (W)$ for every inclusion $W \subset U$ of open sets, can be written in terms of the bilinear operators $B_n$ for $n \geq 1$ as
\begin{equation}
\label{obstructionmultiplication}
B_n^{W} (\phi_0 (\blank), \phi_0 (\blank)) = \phi_0 (B_n^U (\blank {,} \blank)).
\end{equation}

The restriction of $(\mathcal O_X \llrr{\hbar}, \star)$ to an open cover $\mathfrak U$ of acyclic open sets gives a formal deformation of the diagram $\mathcal O_X \vert_{\mathfrak U}$ with parameter $\hbar$.

As part of the proof of his Formality Conjecture in \cite{kontsevich1}, Kontsevich gave an explicit construction of a star product on $\mathbb R^d$. This star product works equally on $\mathbb C^d$ and quantizations of more general algebraic varieties were studied in \cite{kontsevich2,vandenbergh,yekutieli}.

For a Poisson structure $\eta$ on $\mathbb C^d$, the {\it Kontsevich star product} $\star^{\mathrm K}$ is defined by
\begin{equation}
\label{kontsevichstar}
f \star^{\mathrm K} g = f g + \sum_{n \geq 1} \hbar^n \sum_{\Gamma \in \mathfrak G_{n,2}} w_\Gamma \, B_\Gamma (f, g)
\end{equation}
where $\Gamma$ is an ``admissible'' graph, $\mathfrak G_{n,2}$ is a finite set of such graphs, $B_\Gamma$ are bidifferential operators constructed from $\Gamma$ and $w_\Gamma$ is the {\it weight} of $\Gamma$ defined in terms of integral over a certain configuration space.

\begin{lemma}[\cite{dito}]
\label{secondorder}
Up to second order in $\hbar$ the Kontsevich star product for $\eta$ on $\mathbb C^d$ is given by
\begin{flalign*}
&& f \star^{\mathrm K} g ={}& f g \\
&& &+ \hbar \displaystyle\sum_{i,j} \eta^{ij} \; \partial_i(f) \; \partial_j(g) &
\mathllap{\begin{tikzpicture}[x=2em,y=2em,baseline=.7em]
\draw[line width=.5pt, fill=black] (1,0) circle(0.25ex);
\draw[line width=.5pt, fill=black] (0,0) circle(0.25ex);
\draw[line width=.5pt, fill=white] (.5,.886) circle(0.25ex);
\node[shape=circle, scale=0.8](f) at (0,0) {};
\node[shape=circle, scale=0.8](g) at (1,0) {};
\node[shape=circle, scale=0.8](one) at (.5,.886) {};
\path[-stealth, line width=.6pt, line cap=round] (one) edge (f);
\path[-stealth, line width=.6pt, line cap=round] (one) edge (g);
\end{tikzpicture}}
\\
&& &+ \frac{\hbar^2}2 \sum_{i,j,k,l} \eta^{ij} \eta^{kl} \; \partial_i \partial_k (f) \; \partial_j \partial_l (g) &
\mathllap{\begin{tikzpicture}[x=2em,y=2em,baseline=.7em]
\draw[line width=.5pt, fill=black] (1,0) circle(0.25ex);
\draw[line width=.5pt, fill=black] (0,0) circle(0.25ex);
\draw[line width=.5pt, fill=white] (0,1) circle(0.25ex);
\draw[line width=.5pt, fill=white] (1,1) circle(0.25ex);
\node[shape=circle, scale=0.7](f) at (0,0) {};
\node[shape=circle, scale=0.7](g) at (1,0) {};
\node[shape=circle, scale=0.7](L) at (0,1) {};
\node[shape=circle, scale=0.7](R) at (1,1) {};
\path[-stealth, line width=.5pt, line cap=round] (L) edge (f);
\path[-stealth, line width=.5pt, line cap=round] (L) edge (g);
\path[-stealth, line width=.5pt, line cap=round] (R) edge (f);
\path[-stealth, line width=.5pt, line cap=round] (R) edge (g);
\end{tikzpicture}}
\\
&& &+ \frac{\hbar^2}3 \sum_{i,j,k,l} \eta^{ij} \; \partial_i (\eta^{kl}) \; \partial_j \partial_l (f) \; \partial_k (g) &
\mathllap{\begin{tikzpicture}[x=2em,y=2em,baseline=.7em]
\draw[line width=.6pt, fill=black] (1,0) circle(0.25ex);
\draw[line width=.6pt, fill=black] (0,0) circle(0.25ex);
\draw[line width=.6pt, fill=white] (0,1) circle(0.25ex);
\draw[line width=.6pt, fill=white] (1,1) circle(0.25ex);
\node[shape=circle, scale=0.7](f) at (0,0) {};
\node[shape=circle, scale=0.7](g) at (1,0) {};
\node[shape=circle, scale=0.7](L) at (0,1) {};
\node[shape=circle, scale=0.7](R) at (1,1) {};
\path[-stealth, line width=.6pt, line cap=round] (L) edge (f);
\path[-stealth, line width=.6pt, line cap=round] (L) edge (R);
\path[-stealth, line width=.6pt, line cap=round] (R) edge (f);
\path[-stealth, line width=.6pt, line cap=round] (R) edge (g);
\end{tikzpicture}}
\\
&& &+ \frac{\hbar^2}3 \sum_{i,j,k,l} \eta^{kl} \; \partial_k (\eta^{ij}) \; \partial_i (f) \; \partial_j \partial_l (g) &
\mathllap{\begin{tikzpicture}[x=2em,y=2em,baseline=.7em]
\draw[line width=.6pt, fill=black] (1,0) circle(0.25ex);
\draw[line width=.6pt, fill=black] (0,0) circle(0.25ex);
\draw[line width=.6pt, fill=white] (0,1) circle(0.25ex);
\draw[line width=.6pt, fill=white] (1,1) circle(0.25ex);
\node[shape=circle, scale=0.7](f) at (0,0) {};
\node[shape=circle, scale=0.7](g) at (1,0) {};
\node[shape=circle, scale=0.7](L) at (0,1) {};
\node[shape=circle, scale=0.7](R) at (1,1) {};
\path[-stealth, line width=.6pt, line cap=round] (R) edge (g);
\path[-stealth, line width=.6pt, line cap=round] (R) edge (L);
\path[-stealth, line width=.6pt, line cap=round] (L) edge (g);
\path[-stealth, line width=.6pt, line cap=round] (L) edge (f);
\end{tikzpicture}}
\\
&& &- \frac{\hbar^2}6 \sum_{i,j,k,l} \partial_l (\eta^{ij}) \; \partial_j (\eta^{kl}) \; \partial_i (f) \; \partial_k (g) &
\mathllap{\begin{tikzpicture}[x=2em,y=2em,baseline=.7em]
\draw[line width=.6pt, fill=black] (1,0) circle(0.25ex);
\draw[line width=.6pt, fill=black] (0,0) circle(0.25ex);
\draw[line width=.6pt, fill=white] (0,1) circle(0.25ex);
\draw[line width=.6pt, fill=white] (1,1) circle(0.25ex);
\node[shape=circle, scale=0.7](f) at (0,0) {};
\node[shape=circle, scale=0.7](g) at (1,0) {};
\node[shape=circle, scale=0.7](L) at (0,1) {};
\node[shape=circle, scale=0.7](R) at (1,1) {};
\path[-stealth, line width=.6pt, line cap=round] (L) edge[out=15,   in=165] (R);
\path[-stealth, line width=.6pt, line cap=round] (L) edge (f);
\path[-stealth, line width=.6pt, line cap=round] (R) edge[out=-165, in=-15] (L);
\path[-stealth, line width=.6pt, line cap=round] (R) edge (g);
\end{tikzpicture}}
\\
&& &+ \dotsb
\end{flalign*}
\end{lemma}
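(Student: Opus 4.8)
The plan is to expand the defining sum (\ref{kontsevichstar}) order by order in $\hbar$ for $n=1$ and $n=2$, by enumerating the admissible graphs in $\mathfrak G_{1,2}$ and $\mathfrak G_{2,2}$, writing down the associated bidifferential operators $B_\Gamma$, and evaluating the Kontsevich weights $w_\Gamma$. Recall that an admissible graph $\Gamma \in \mathfrak G_{n,2}$ has $n$ aerial vertices $p_1, \dotsc, p_n$ in the upper half plane, each emitting exactly two ordered edges, together with two ground vertices carrying $f$ and $g$ and emitting no edges; each aerial vertex contributes a copy of the bivector $\eta$, an edge landing on a vertex $v$ acts by $\partial$ in the index it carries on the tensor sitting at $v$, self-loops are forbidden, and an edge into an aerial vertex differentiates the $\eta$ placed there. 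Thus $B_\Gamma$ is read off combinatorially from $\Gamma$, and the only analytic content is the weights.

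For $n=1$ the single aerial vertex must send both edges to the ground. The graph directing both edges to the same ground vertex produces $\eta^{ij}\partial_i\partial_j(\blank)$, which vanishes by antisymmetry of $\eta^{ij}$, so only the graph with one edge to $f$ and one to $g$ survives. Its total contribution carries weight $1$, fixed by the normalization $B_1(f,g) = \{f,g\}_\eta$ stated earlier (equivalently, the two orientations combine via antisymmetry), giving the first-order term $\sum_{i,j}\eta^{ij}\,\partial_i(f)\,\partial_j(g) = \{f,g\}_\eta$.

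For $n=2$ I would enumerate the graphs on the two aerial vertices $p,q$, whose edges may run between $p$ and $q$ or to the ground. Discarding every graph whose operator vanishes by antisymmetry of $\eta$ --- namely any graph with a double edge landing on a single aerial vertex (which forces a contraction $\eta^{ij}\partial_i\partial_j\eta = 0$) or both edges of one vertex on the same ground vertex --- leaves exactly four families: (a) both vertices connect to $f$ and $g$, giving $\eta^{ij}\eta^{kl}\,\partial_i\partial_k(f)\,\partial_j\partial_l(g)$; (b) one edge from $p$ differentiates $\eta$ at $q$ while the remaining three edges reach $f,g$, together with its mirror image exchanging the roles of $f$ and $g$ (equivalently of $p$ and $q$), giving the two tripod operators $\eta^{ij}\,\partial_i(\eta^{kl})\,\partial_j\partial_l(f)\,\partial_k(g)$ and $\eta^{kl}\,\partial_k(\eta^{ij})\,\partial_i(f)\,\partial_j\partial_l(g)$; and (c) the double-edge graph with $p$ and $q$ joined in both directions and the two remaining edges reaching $f$ and $g$, giving $\partial_l(\eta^{ij})\,\partial_j(\eta^{kl})\,\partial_i(f)\,\partial_k(g)$.

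The remaining and principal task is to evaluate the weights $w_\Gamma$, which are the normalized integrals $\tfrac{1}{(2\pi)^{2n}}\int_{C_{n,2}^+}\bigwedge_e \mathrm d\phi_e$ of the Kontsevich angle forms over the compactified configuration spaces (up to the combinatorial symmetry factor). For family (a) the integrand factorizes over the two vertices, so the weight reduces to the square of the $n=1$ weight divided by $2!$, yielding $\tfrac12$ and reproducing the Moyal term; this is the expected contribution since for constant $\eta$ only (a) survives and the product is the exponential of the bivector. Families (b) and (c) require the genuine two-vertex integrals, which evaluate to $\tfrac13$ and $-\tfrac16$ respectively. This weight computation is the main obstacle, being the only nonformal input: I would either carry out the configuration-space integrals directly, choosing explicit coordinates, integrating the products of angle forms and using Stokes' theorem to control the boundary strata, or simply invoke the values established in Kontsevich \cite{kontsevich1} and Dito \cite{dito}. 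Assembling the surviving graphs weighted by $w_\Gamma$ and collapsing the mirror pairs using the antisymmetry of $\eta$ then yields the five displayed terms with coefficients $1, \tfrac12, \tfrac13, \tfrac13, -\tfrac16$, completing the proof.
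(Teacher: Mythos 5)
The paper offers no proof of this lemma: it is quoted directly from Dito \cite{dito}, so there is no internal argument to compare yours against. Your outline is the standard (and essentially the only) derivation --- enumerate the admissible graphs in $\mathfrak G_{1,2}$ and $\mathfrak G_{2,2}$, discard those whose bidifferential operators vanish by the antisymmetry of $\eta$ (double edges into an aerial vertex or into a single ground vertex), and weight the five surviving operator families --- and your enumeration and identification of those families are correct. The one substantive caveat is that the values $w_\Gamma = \tfrac12, \tfrac13, -\tfrac16$ are the entire nonformal content of the statement, and you defer them to the configuration-space integrals of \cite{kontsevich1} and \cite{dito} rather than computing them; since the paper does exactly the same by citing Dito for the whole lemma, your proposal is on an equal footing with the paper's treatment. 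Were a self-contained proof required, the two-vertex angle-form integrals giving $\tfrac13$ and $-\tfrac16$ would still have to be evaluated: your factorization argument for the Moyal weight $\tfrac12$ is sound (and is corroborated by the constant-$\eta$ case, where the product must be the exponential of the bivector), but the tripod and double-edge weights do not reduce to one-vertex data and genuinely require the Stokes-theorem analysis on the compactified configuration space that you only gesture at.
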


To illustrate this, we consider the noncompact complex surfaces $Z_k := \Tot \mathcal O_{\mathbb P^1} (-k)$ for $k \geq 1$.

\begin{notation}
\label{canonicalcoordinates}
Cover $Z_k$ by two open sets $U = \{ (z, u) \in \mathbb C^2 \}$ and $V = \{ (\zeta, v) \in \mathbb C^2 \}$ such that on $U \cap V \simeq \mathbb C^* \times \mathbb C$ we identify
\[
(\zeta, v) \mapsto (z^{-1}, z^k u).
\]
We refer to these as {\it canonical coordinates}.
\end{notation}

Here our aim is to give one particular Poisson structure $\eta \in \H^0 (Z_k, \extprod^2 \mathcal T_{Z_k})$ for each $k \geq 1$ and explain how it can be quantized. For a detailed study of deformation quantizations of the surfaces $Z_k$ we refer to \cite{barmeiergasparim2}.

Since $Z_k$ is a surface, $\extprod^3 \mathcal T_{Z_k} = 0$ and thus any $\eta \in \H^0 (Z_k, \mathcal T_{Z_k})$ defines a global Poisson structure on $Z_k$.

We give Poisson structures on $Z_k$ in canonical coordinates (Notation \ref{canonicalcoordinates}). In particular, on $U = \{ (z, u) \in \mathbb C^2 \}$ a (holomorphic) Poisson structure can be given as a bivector field $f_U \, \partial_z \wedge \partial_u$, where $f_U$ is some holomorphic function on $U$ and similarly on $V = \{ (\zeta, v) \in \mathbb C^2 \}$. A global Poisson structure may thus be given by a pair $(f_U \, \partial_z \wedge \partial_u, f_V \, \partial_\zeta \wedge \partial_v)$ such that rewriting $f_V$ in terms of $z$ and $u$ via the change of coordinates $(\zeta, v) = (z^{-1}, z^k u)$ is equal to $-z^{k-2} f_U$. (Here $-z^{k-2}$ is the transition function of the anticanonical line bundle $\extprod^2 \mathcal T_{Z_k}$ written in canonical coordinates.) When referring to a Poisson structure in canonical coordinates, we shall often write only its coefficient functions as a pair $(f_U, f_V)$.

\begin{lemma}[\cite{barmeiergasparim2}]
\label{poissonzk}
$\H^0 (Z_k, \extprod^2 \mathcal T_{Z_k})$ is a finitely generated module over the algebra of global functions on $Z_k$. In canonical coordinates, generators can be given by
\begin{alignat*}{5}
(1) \quad & (1, -\zeta), (z, -1)                                && \text{for $k = 1$} \\
(2) \quad & (1, -1)                                             && \text{for $k = 2$} \\
(3) \quad & (u, -\zeta^2 v), (z u, -\zeta v), (z^2 u, -v) \quad && \text{for $k \geq 3$.}
\end{alignat*}
\end{lemma}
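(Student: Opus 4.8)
The plan is to reduce everything to an explicit description of the two canonical charts and then match monomials. First I would determine the ring $R = \H^0(Z_k, \mathcal O_{Z_k})$ of global regular functions. Since $Z_k$ is glued from the chart $U$ with coordinate ring $\mathbb C[z,u]$ and the chart $V$ with coordinate ring $\mathbb C[\zeta,v]$ along $\zeta = z^{-1}$, $v = z^k u$, a global function is a polynomial $\sum_{n\ge 0} a_n(z)u^n \in \mathbb C[z,u]$ whose rewriting in $(\zeta,v)$-coordinates, namely $\sum_n a_n(\zeta^{-1})\zeta^{kn}v^n$, again lies in $\mathbb C[\zeta,v]$. This forces $\deg a_n \le kn$, so
\[
R = \bigl\{\, \textstyle\sum_{n\ge 0} a_n(z)u^n : a_n \in \mathbb C[z],\ \deg a_n \le kn \,\bigr\}.
\]
(Equivalently $R = \bigoplus_n \H^0(\mathbb P^1, \mathcal O(kn))$ under $\pi_* \mathcal O_{Z_k} = \bigoplus_n \mathcal O_{\mathbb P^1}(kn)$, where $\pi \colon Z_k \tikzto \mathbb P^1$ is the projection.)

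Next I would compute the transition function of $\extprod^2\mathcal T_{Z_k}$. Differentiating $\zeta = z^{-1}$, $v = z^k u$ gives $\partial_z = -z^{-2}\partial_\zeta + kz^{k-1}u\,\partial_v$ and $\partial_u = z^k\partial_v$, hence
\[
\partial_z \wedge \partial_u = -z^{k-2}\,\partial_\zeta \wedge \partial_v.
\]
Thus a global section is a pair $(f_U\,\partial_z\wedge\partial_u,\ f_V\,\partial_\zeta\wedge\partial_v)$ with $f_V = -z^{k-2}f_U$ on the overlap. Writing $f_U = \sum_n b_n(z)u^n$ and demanding that $f_V = -\sum_n b_n(\zeta^{-1})\zeta^{kn+2-k}v^n$ be regular on $V$ yields the degree bound $\deg b_n \le k(n-1)+2$, so
\[
M := \H^0(Z_k, \extprod^2\mathcal T_{Z_k}) = \bigl\{\, \textstyle\sum_{n\ge 0} b_n(z)u^n : \deg b_n \le k(n-1)+2 \,\bigr\},
\]
with terms where $k(n-1)+2 < 0$ forced to vanish. (This matches $M = \bigoplus_n \H^0(\mathbb P^1, \mathcal O(k(n-1)+2))$ via $\extprod^2\mathcal T_{Z_k} = \pi^*\mathcal O_{\mathbb P^1}(2-k)$.) A direct check shows the listed pairs satisfy $f_V = -z^{k-2}f_U$; for instance when $k\ge 3$ one has $-z^{k-2}u = -\zeta^{2-k}\zeta^k v = -\zeta^2 v$, recovering $(u,-\zeta^2 v)$.

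Finally I would prove that the listed monomials generate $M$ over $R$ by a degree-splitting argument on monomials $z^j u^n \in M$. For $k\ge 3$ the lowest nonzero $u$-degree is $n=1$, spanned by $u, zu, z^2u$; given $z^j u^n$ with $0\le j\le k(n-1)+2$ and $n\ge 1$, I would write $z^j u^n = (z^{j-i}u^{n-1})\cdot(z^i u)$ with $i\in\{0,1,2\}$ chosen so that $0\le j-i\le k(n-1)$, whence $z^{j-i}u^{n-1}\in R$ and $z^i u$ is a generator. For $k=2$ the bound $\deg b_n \le 2n$ shows $M = R$, generated by $1$; for $k=1$ the bound $\deg b_n \le n+1$ together with the same splitting, now using the generators $1,z$ of the $n=0$ piece, gives $M = R\{1,z\}$. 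In each case finitely many generators suffice, which proves finite generation.

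The computations are elementary; the only real content is (i) getting the transition function $-z^{k-2}$ with the correct sign and exponent, which pins down the degree bound $k(n-1)+2$ and hence the three regimes $k=1$, $k=2$, $k\ge 3$, and (ii) the splitting step. The hard part is therefore the bookkeeping of degree bounds: one must check that the window of $z$-degrees spanned by the lowest generators (width $2$ for $k\ge 3$, width $1$ for $k=1$) fits together with the $z$-degrees available from $R$ (top degree $k(n-1)$ in $u$-degree $n-1$) so as to cover the full range $0\le j\le k(n-1)+2$ without gaps. Once this is verified the generation is automatic, and I would present the verification case by case as above.
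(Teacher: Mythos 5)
Your proposal is correct. Note that the paper itself offers no proof of this lemma: it is imported from the unpublished reference \cite{barmeiergasparim2}, so there is nothing internal to compare against beyond the surrounding discussion. Your computation is consistent with that discussion --- in particular the transition function $-z^{k-2}$ of $\extprod^2 \mathcal T_{Z_k}$ and the compatibility condition $f_V = -z^{k-2} f_U$ are stated verbatim in the paragraph preceding the lemma --- and your degree bounds $\deg a_n \le kn$ for global functions and $\deg b_n \le k(n-1)+2$ for global bivector fields are right, as is the splitting $z^j u^n = (z^{j-i}u^{n-1})\cdot(z^i u)$ with $i = \max(0,\, j - k(n-1)) \in \{0,1,2\}$ that establishes generation in the three regimes. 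This supplies a complete, elementary verification of exactly the statement the paper cites.
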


\begin{theorem}[\cite{barmeiergasparim2}]
\label{quantizable}
Let $\eta \in \H^0 (Z_k, \extprod^2 \mathcal T_{Z_k})$ be the cohomology class represented by the $0$-cocycle $(\eta_U, \eta_V) = (z u, -\zeta v)$. Then $\eta$ can be quantized, giving rise to a commutative diagram
\begin{align}
\label{diamond}
\begin{tikzpicture}[baseline=-.35em,description/.style={fill=white,inner sep=2pt}]
\matrix (m) [matrix of math nodes, row sep={3.5em,between origins}, inner sep=2pt,
column sep={3.5em,between origins}, ampersand replacement=\&]
{\& \mathcal O_{Z_k} \llrr{\hbar} (Z_k) \& \\ \mathcal O_{Z_k} \llrr{\hbar} (U) \&\& \mathcal O_{Z_k} \llrr{\hbar} (V) \\ \& \mathcal O_{Z_k} \llrr{\hbar} (U \cap V) \& \\};
\path[stealth-, line width=.65pt, font=\scriptsize]
(m-3-2) edge (m-2-1)
(m-3-2) edge (m-2-3)
(m-2-1) edge (m-1-2)
(m-2-3) edge (m-1-2)
;
\end{tikzpicture}
\end{align}
where $U \simeq \mathbb C^2$ is endowed with the Kontsevich star product associated to $\eta_U$.
\end{theorem}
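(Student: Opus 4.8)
The plan is to produce the three star products $\star_U$, $\star_V$, $\star_W$ on $\mathcal O_{Z_k}\llrr{\hbar}(U)$, $\mathcal O_{Z_k}\llrr{\hbar}(V)$ and $\mathcal O_{Z_k}\llrr{\hbar}(U\cap V)$ that make the two restriction maps into algebra morphisms, with $\star_U$ the Kontsevich product for $\eta_U = zu\,\partial_z\wedge\partial_u$. First I would record that in canonical coordinates (Notation \ref{canonicalcoordinates}) the class $\eta$ is \emph{log-canonical} in both charts: one has $\{z,u\}_\eta = zu$ on $U$, and a direct computation using $(\zeta,v)=(z^{-1},z^k u)$ gives $\{\zeta,v\}_\eta = -z^{k-1}u = -\zeta v$ on $V$, in agreement with Lemma \ref{poissonzk}. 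The overlap $W = U\cap V \simeq \mathbb C^*\times\mathbb C$ has coordinate ring $\mathbb C[z^{\pm1},u]$; the map $\phi$ is the localization $\mathbb C[z,u]\hookrightarrow\mathbb C[z^{\pm1},u]$, while $\psi$ identifies $\mathbb C[\zeta,v]$ with the subalgebra $\mathbb C[z^{-1},z^k u]$.

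The key structural observation is that the algebraic torus $T=\mathbb G_m^2$ acting on $U=\mathbb C^2$ by scaling $(z,u)$ preserves $\eta_U$, since $zu\,\partial_z\wedge\partial_u$ is $T$-invariant. Because Kontsevich's construction is equivariant under linear automorphisms preserving the Poisson bivector, $\star_U$ is $T$-equivariant and therefore \emph{preserves the $\mathbb Z_{\geq0}^2$-bigrading}, i.e.\ $z^au^b\star_U z^cu^d \in \mathbb C\llrr{\hbar}\cdot z^{a+c}u^{b+d}$. To second order this is already visible from the explicit formula in Lemma \ref{secondorder}, which I would use as a sanity check. I would then extend $\star_U$ to a product $\star_W$ on $\mathbb C[z^{\pm1},u]\llrr{\hbar}$ via the same bidifferential operators: the coefficients built from $\eta_U$ and its derivatives are monomials, so the operators act equally well on Laurent polynomials in $z$. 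By construction $\phi$ is then an algebra morphism, and $\star_W$ still preserves the (now $\mathbb Z\times\mathbb Z_{\geq0}$) bigrading and quantizes $\eta_W$.

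It remains to transport the product to $V$ and verify compatibility, which is the crux. The change of coordinates $(\zeta,v)=(z^{-1},z^k u)$ is a \emph{monomial} transformation, i.e.\ a torus automorphism acting on the exponent lattice by the integral matrix $\bigl(\begin{smallmatrix}-1&k\\0&1\end{smallmatrix}\bigr)$ of determinant $-1$, sending $(a,b)\mapsto(A,B)=(-a+kb,b)$. Since $\star_W$ preserves the bigrading, its only invariant datum is the antisymmetric ``commutator'' pairing on exponents, and this pairing transforms by the determinant: one computes $AD-BC=-(ad-bc)$, which is exactly the sign relating $\eta_V=-\eta_U$. Consequently $\psi(\mathcal O(V))=\mathbb C[z^{-1},z^ku]$ is closed under $\star_W$, and pulling the product back along $\psi$ yields a deformation quantization $\star_V$ of $\eta_V$, so that $\psi$ too is an algebra morphism. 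Both restriction maps being morphisms is precisely the compatibility condition (\ref{obstructionmultiplication}), equivalently the Maurer--Cartan equation (\ref{obstructiontilde}) for the diagram, and the diamond (\ref{diamond}) then commutes (the global-sections node maps in compatibly since a global function is by definition a pair agreeing on $W$).

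I expect the main obstacle to be the third step, namely justifying rigorously that the non-affine monomial coordinate change is compatible with the grading-preserving product: Kontsevich's formula is itself coordinate-dependent, so one cannot simply invoke naturality under $\psi$. The clean way around this is to exploit the explicit closed form of the star product for log-canonical structures, a bicharacter (quantum-torus) twist up to the gauge equivalence fixed by the Kontsevich normalization on $U$, for which localization and monomial substitutions act transparently; the detailed verification of this closed form is carried out in \cite{barmeiergasparim2}.
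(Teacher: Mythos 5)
The first thing to say is that the paper contains no proof of Theorem \ref{quantizable} to compare you against: the statement is imported wholesale from the unpublished preprint \cite{barmeiergasparim2}, so your attempt can only be judged on its own merits (and against the surrounding formulas the paper does supply). On those terms your outline is essentially correct, and its three load-bearing points all check out: (i) the diagonal torus acts linearly on $U \simeq \mathbb C^2$ preserving $\eta_U = zu\,\partial_z \wedge \partial_u$, and since Kontsevich's bidifferential operators transform tensorially under linear maps with coordinate-independent weights, $\star_U$ is torus-equivariant and hence sends a pair of monomials $(z^a u^b, z^c u^d)$ to a $\mathbb C \llrr{\hbar}$-multiple of $z^{a+c} u^{b+d}$, consistently with Lemma \ref{secondorder}; (ii) the operators have monomial coefficients and so extend to $\mathbb C [z^{\pm 1}, u]$, associativity persisting because it is an identity of differential operators with polynomial coefficients; (iii) the exponent sets of $\psi (\mathbb C [\zeta, v]) = \mathbb C [z^{-1}, z^k u]$ and of the global sections are additive submonoids of $\mathbb Z \times \mathbb Z_{\geq 0}$, so both subspaces are closed under any grading-preserving product, and your determinant computation $AD - BC = -(ad - bc)$ reproduces exactly the sign in (\ref{nu1}), so the pulled-back product has first-order term $\{ f, g \}_{\eta_V}$. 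Two caveats. Your instinct that naturality of Kontsevich's formula cannot be invoked for the non-affine map $\psi$ is right, but the remedy does not require the ``closed form up to gauge'' that you defer back to \cite{barmeiergasparim2} (a circular citation, since that is the very source of the theorem): grading preservation together with the direct first-order check already gives everything the statement asserts, because the theorem only requires the diagram (\ref{diamond}) to consist of algebra morphisms quantizing $\eta$ with the Kontsevich product on $U$, not that the product on $V$ be the Kontsevich product for $\eta_V$ in the $(\zeta, v)$ chart. Relatedly, the sentence claiming that the ``only invariant datum'' of a grading-preserving product is the antisymmetric pairing on exponents is an unproved gauge-equivalence assertion that your argument does not need; I would delete it and instead add the one-line observation that the induced $B_n^V$ are again bidifferential, since their monomial coefficients are polynomial in the exponents.
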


Denoting by $\mu_0, \nu_0, \xi_0$ the (commutative) multiplications on
\begin{align*}
\mathcal O_{Z_k} (U) &\simeq \mathbb C [\eqmakebox[zzeta]{$z,{}$} u] \\
\mathcal O_{Z_k} (V) &\simeq \mathbb C [\eqmakebox[zzeta]{$\zeta,{}$} v] \\
\mathcal O_{Z_k} (U \mkern1mu{\cap}\mkern1mu V) &\simeq \mathbb C [\eqmakebox[zzeta]{$z^\pm,{}$} u]
\end{align*}
respectively, the deformation of the diagram $\mathcal O_{Z_k} \vert_{\mathfrak U}$ corresponding to the deformation quantization given in Theorem \ref{quantizable} can be expressed as $\mu_n = B_n^U$ with $\mu_1 = B_1^U = \{ \blank {,} \blank \}_{\eta_U}$ and similarly for $\nu_n$ and $\xi_n$.

For example, on monomials $z^a u^b, z^c u^d \in \mathbb C [z, u] \simeq \mathcal O_{Z_k} (U)$ we have
\begin{align}
\mu_1 \colon \eqmakebox[obs]{$(z^a u^b, z^c u^d)$} &\tikzmapsto \{ z^a u^b, z^c u^d \}_{\eta_U} = (ad-bc) z^{a+c} u^{b+d} \label{mu1}
\intertext{and similarly}
\nu_1 \colon \eqmakebox[obs]{$(\zeta^a v^b, \zeta^c v^d)$} &\tikzmapsto \{ \zeta^a v^b, \zeta^c v^d \}_{\eta_V} = -(ad-bc) \zeta^{a+c} v^{b+d} \label{nu1}
\end{align}
where the sign in (\ref{nu1}) appears because of the sign in $(\eta_U, \eta_V) = (zu, -\zeta v)$.

\subsection{Classical deformations}
\label{classicaldeformations}

While in \S \ref{deformationquantization} we considered deformations of the multiplication on the algebra $\mathcal O_X (U)$ of sections over an open set $U$, but fixing the morphisms, now we show how classical deformations can be considered as deformations of the morphisms, but fixing the (commutative) algebra structure over each open set.

A formal deformation of a smooth semi-separated scheme $X$ can be considered as a family over $\mathbb C \llrr{\epsilon}$. Given a cover of $X$ by smooth affine schemes $U_i$, the individual affine charts do not admit any ``classical'' scheme-theoretic deformations (see \cite{sernesi}). However, such a family gives rise to a deformation of the morphisms which we now illustrate for the noncompact surfaces $Z_k$, whose classical deformations have been studied in \cite{barmeiergasparim1}.

\begin{lemma}[{\cite[Lem.\ 5.3]{barmeiergasparim1}}]
Let $k \geq 2$. Then $\H^1 (Z_k, \mathcal T_{Z_k}) \simeq \mathbb C^{k-1}$ and in canonical coordinates a cohomology class $\theta \in \H^1 (Z_k, \mathcal T_{Z_k})$ can be represented by a $1$-cocycle
\[
\sum_{i=1}^{k-1} t_i z^{-k+i} \frac{\partial}{\partial u}
\]
for some coefficients $t_i \in \mathbb C$.
\end{lemma}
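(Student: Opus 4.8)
The plan is to compute $\H^1 (Z_k, \mathcal T_{Z_k})$ directly as Čech cohomology for the two-chart cover $\mathfrak U = \{ U, V \}$. Since $U$, $V$ and $U \cap V$ are affine (hence acyclic), Leray's theorem (already invoked above) gives $\H^1 (Z_k, \mathcal T_{Z_k}) \simeq \check \H^1 (\mathfrak U, \mathcal T_{Z_k})$, and for a cover by two opens this is simply the cokernel
\[
\check \H^1 (\mathfrak U, \mathcal T_{Z_k}) = \mathcal T_{Z_k} (U \cap V) \big/ \big( \mathcal T_{Z_k} (U) + \mathcal T_{Z_k} (V) \big),
\]
where both summands are viewed inside $\mathcal T_{Z_k} (U \cap V)$ via restriction. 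Here $\mathcal T_{Z_k} (U) = \mathbb C [z, u] \langle \partial_z, \partial_u \rangle$, $\mathcal T_{Z_k} (V) = \mathbb C [\zeta, v] \langle \partial_\zeta, \partial_v \rangle$ and $\mathcal T_{Z_k} (U \cap V) = \mathbb C [z^\pm, u] \langle \partial_z, \partial_u \rangle$. First I would record the coordinate change: differentiating $(\zeta, v) = (z^{-1}, z^k u)$ gives
\[
\partial_\zeta = -z^2 \partial_z + k z u \, \partial_u, \qquad \partial_v = z^{-k} \partial_u,
\]
so that the restriction of $\mathcal T_{Z_k} (V)$ is completely explicit on the monomials $\zeta^m v^n \partial_\zeta$ and $\zeta^m v^n \partial_v$.

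The key organizing idea is to grade everything by the weight of the fibrewise $\mathbb C^*$-action $u \mapsto t u$ (equivalently $v \mapsto t v$), assigning weight $0$ to $z, \zeta$, weight $1$ to $u, v$, and weight $-1$ to $\partial_u, \partial_v$. One checks that the two transition formulae above are weight-homogeneous — the right-hand side of the first has weight $0$ and that of the second weight $-1$ — so the entire Čech complex, and hence $\check \H^1$, is graded. I would then compute each weight separately. In weight $w$ the space $\mathcal T_{Z_k} (U \cap V)$ is spanned by $z^a u^w \partial_z$ and $z^a u^{w+1} \partial_u$ with $a \in \mathbb Z$; in particular it vanishes for $w \leq -2$ and, for $w = -1$, consists only of $\{ z^a \partial_u : a \in \mathbb Z \}$.

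The heart of the computation is weight $-1$. There $\mathcal T_{Z_k} (U)$ contributes $\{ z^a \partial_u : a \geq 0 \}$, while the only weight $-1$ contributions from $\mathcal T_{Z_k} (V)$ come from $\zeta^m \partial_v = z^{-m-k} \partial_u$, spanning $\{ z^a \partial_u : a \leq -k \}$. Their sum misses exactly the powers $a \in \{ -1, -2, \dotsc, -k+1 \}$, giving a $(k-1)$-dimensional quotient with basis $z^{-k+i} \partial_u$ for $i = 1, \dotsc, k-1$ — precisely the asserted cocycles. (This weight $-1$ piece is canonically $\H^1 (\mathbb P^1, \mathcal O (-k))$, the cohomology of the relative tangent line bundle $\mathcal T_{Z_k / \mathbb P^1}$.) It then remains to kill every weight $w \geq 0$, which is where the only real subtlety lies: the transition vector field $\partial_\zeta$ couples the $\partial_z$- and $\partial_u$-directions, so one cannot treat the two components independently. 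The point is that in weight $w \geq 0$ the elements $\zeta^m v^{w+1} \partial_v = z^{kw-m} u^{w+1} \partial_u$ together with $\mathcal T_{Z_k} (U)$ already span the entire $\partial_u$-line $\mathbb C [z^\pm] u^{w+1} \partial_u$ (because $kw \geq 0$ closes the gap with the nonnegative powers coming from $U$); this frees up the mixed generators $\zeta^m v^w \partial_\zeta$ to supply all negative powers of $z$ in the $\partial_z$-direction, their unwanted $\partial_u$-tails being absorbed by the already-surjective $\partial_u$-part. Hence $\mathcal T_{Z_k} (U) + \mathcal T_{Z_k} (V)$ is everything in each weight $w \geq 0$, so $\check \H^1$ is concentrated in weight $-1$ and equals $\mathbb C^{k-1}$. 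The main obstacle throughout is purely bookkeeping the coupling introduced by $\partial_\zeta = -z^2 \partial_z + kzu \, \partial_u$; the weight grading is exactly what tames it, and linear independence of the $z^{-k+i} \partial_u$ in the quotient is immediate once one observes that distinct powers of $z$ cannot be related by the two spanning sets in weight $-1$.
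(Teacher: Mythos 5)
Your computation is correct, and it fills in a proof that the paper itself does not give: the lemma is simply imported from \cite[Lem.~5.3]{barmeiergasparim1}, so there is no in-text argument to compare against. Your route --- identifying $\check\H^1$ of the two-chart cover with the cokernel $\mathcal T_{Z_k}(U\cap V)/(\mathcal T_{Z_k}(U)+\mathcal T_{Z_k}(V))$, grading by the fibrewise $\mathbb C^*$-weight, and checking that everything in weight $w\geq 0$ is a coboundary while weight $-1$ leaves exactly the gap $z^{-1}\partial_u,\dotsc,z^{-k+1}\partial_u$ --- is the standard direct verification, and the two delicate points are handled properly: the transition formula $\partial_\zeta=-z^2\partial_z+kzu\,\partial_u$ is weight-homogeneous so the grading is legitimate, and in weight $w\geq 0$ the $\partial_u$-line is saturated first (using $\zeta^m v^{w+1}\partial_v=z^{kw-m}u^{w+1}\partial_u$ with $kw\geq 0$), which lets the mixed $\partial_\zeta$-generators supply all negative powers of $z$ in the $\partial_z$-direction with their $\partial_u$-tails absorbed. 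The identification of the surviving weight $-1$ piece with $\H^1(\mathbb P^1,\mathcal O(-k))\simeq\mathbb C^{k-1}$ is a nice sanity check consistent with $k\geq 2$ (for $k=1$ the gap is empty, matching the rigidity of $Z_1$).
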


\begin{theorem}[{\cite[Thm.\ 5.4]{barmeiergasparim1}}]
Let $k \geq 2$. Then $Z_k = \Tot \mathcal O_{\mathbb P^1} (-k)$ admits a $(k{-}1)$-dimensional semi-universal family $Z_k \tikzto M \tikzto \mathbb C^{k-1} \simeq \H^1 (Z_k, \mathcal T_{Z_k})$ of ``classical deformations'', which may be constructed as the family of deformations of the vector bundle structure of $Z_k$ to an affine bundle over $\mathbb P^1$.
\end{theorem}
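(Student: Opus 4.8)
The plan is to construct the family explicitly by deforming the gluing data and then to verify semi-universality through the Kodaira--Spencer map, using the cocycle representatives furnished by the preceding lemma. The key simplification is that $Z_k$ is glued from only the two charts $U$ and $V$, so there is no triple-overlap cocycle condition to check: any biholomorphism of an open subset of $U \cap V$ produces a well-defined complex surface. First I would integrate the Kodaira--Spencer class $\sum_{i=1}^{k-1} t_i z^{-k+i} \partial_u$ to an honest change of gluing. Since this vector field points in the fibre direction $\partial_u$ with coefficient $b(z) = \sum_i t_i z^{-k+i}$ independent of $u$, its time-one flow is the fibrewise translation $(z,u) \mapsto (z, u + b(z))$; composing with the original transition $v = z^k u$ yields the deformed transition
\[
(\zeta, v) = \Big( z^{-1},\; z^k u + \sum_{i=1}^{k-1} t_i z^i \Big).
\]
I would then take $M$ to be the total space obtained by gluing $U \times \mathbb C^{k-1}$ to $V \times \mathbb C^{k-1}$ (with base coordinates $t_1, \dotsc, t_{k-1}$) along this $t$-dependent transition, with $M \tikzto \mathbb C^{k-1}$ the projection and $Z_k$ the central fibre.

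Next I would read off the geometry of the fibres. For fixed $t$ the transition is affine in the fibre coordinate: the linear part $z^k$ is precisely the transition function of $\mathcal O_{\mathbb P^1}(-k)$, while the fibre-independent term $\sum_i t_i z^i$ exhibits the fibre $M_t$ as an affine bundle over $\mathbb P^1$ modelled on $\mathcal O_{\mathbb P^1}(-k)$, that is, a torsor under that line bundle. Such torsors are classified by $\H^1(\mathbb P^1, \mathcal O_{\mathbb P^1}(-k))$, which has dimension $k-1$ for $k \geq 2$, with the classes of $z^1, \dotsc, z^{k-1}$ forming a basis matching the parameters $t_i$; setting $t = 0$ recovers the zero torsor $Z_k = \Tot \mathcal O_{\mathbb P^1}(-k)$ itself. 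This is the promised realization as the family of deformations of the vector-bundle structure to affine bundles over $\mathbb P^1$.

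The heart of the argument is the Kodaira--Spencer computation. Differentiating the transition with respect to $t_i$ at $t = 0$ gives the vertical vector field $z^i \partial_v$ on $U \cap V$; using $v = z^k u$, hence $\partial_v = z^{-k} \partial_u$, this becomes $z^{-k+i} \partial_u$. Thus the Kodaira--Spencer map sends $\partial/\partial t_i$ to the class $[\,z^{-k+i} \partial_u\,] \in \H^1(Z_k, \mathcal T_{Z_k})$, which is exactly the $i$-th generator of the preceding lemma. Since these classes are linearly independent and span, the Kodaira--Spencer map is an isomorphism onto $\H^1(Z_k, \mathcal T_{Z_k}) \simeq \mathbb C^{k-1}$.

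Finally I would upgrade this to semi-universality. Because $Z_k$ is covered by the two affine opens $U, V$ with affine intersection $U \cap V \simeq \mathbb C^* \times \mathbb C$, the vanishing $\H^i(Z_k, \mathcal F) = 0$ for all $i \geq 2$ and all coherent $\mathcal F$ established in \S\ref{deformationsofcoh} applies; in particular the obstruction space $\H^2(Z_k, \mathcal T_{Z_k})$ vanishes and the deformation problem is unobstructed. The base of a semi-universal family is therefore smooth of dimension $\dim \H^1(Z_k, \mathcal T_{Z_k}) = k-1$, and a family over such a base whose Kodaira--Spencer map is an isomorphism is semi-universal by the standard criterion, which our explicit family satisfies. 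The main obstacle I anticipate is the bookkeeping in the Kodaira--Spencer step --- in particular tracking the change of fibre trivialization $\partial_v = z^{-k} \partial_u$ so that the integrated gluing reproduces exactly the cocycle exponents $-k+i$ of the lemma --- together with the appeal to unobstructedness needed to pass from a Kodaira--Spencer isomorphism to genuine semi-universality.
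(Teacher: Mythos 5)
Your construction is essentially the one the paper has in mind: the theorem is imported from \cite{barmeiergasparim1} without proof here, and the text immediately following it writes down exactly the deformed gluing $(\zeta,v) = \big(z^{-1}, z^k u + \sum_i t_i z^i\big)$ of (\ref{coordinatechange}) that you obtain by integrating the cocycle, together with the identification of the fibres as affine bundles over $\mathbb P^1$. Your Kodaira--Spencer computation is correct ($\partial_v = z^{-k}\partial_u$ does convert $z^i\partial_v$ into the generator $z^{-k+i}\partial_u$ of the preceding lemma), and the torsor count $\dim \H^1(\mathbb P^1,\mathcal O_{\mathbb P^1}(-k)) = k-1$ matches the base.

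The one step you should not wave through is the final appeal to ``the standard criterion'': the statement that a family over a smooth base with bijective Kodaira--Spencer map is semi-universal is a theorem for \emph{compact} complex manifolds (Kodaira--Spencer completeness, Kuranishi), and $Z_k$ is noncompact. For noncompact total spaces one must first fix the deformation functor (here, deformations of the diagram $\mathcal O_{Z_k}\vert_{\mathfrak U}$, equivalently of the transition data on the fixed two-chart cover) and then prove completeness and effectiveness directly in that setting; the vanishing $\H^2(Z_k,\mathcal T_{Z_k})=0$ from the two-affine-chart Leray argument gives unobstructedness, but it does not by itself deliver semi-universality. This is precisely the content carried by the citation to \cite[Thm.\ 5.4]{barmeiergasparim1}, so your proof is complete modulo supplying (or citing) that completeness argument rather than the compact-case criterion.
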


A classical deformation $Z_k (0, \theta)$ of $Z_k$ can now be given by the same coordinate charts, but with the identification
\begin{equation}
\label{coordinatechange}
(\zeta, v) = \Big( z^{-1}, z^k u + \textstyle\sum\limits_{i=1}^{k-1} t_i z^i \Big).
\end{equation}

Regarded as a formal deformation parametrized by $\theta$, the change of coordinates (\ref{coordinatechange}) is given by
\begin{equation}
\label{coordinatechangeepsilon}
(\zeta, v) = \Big( z^{-1}, z^k u + \epsilon \textstyle\sum\limits_{i=1}^{k-1} t_i z^i \Big).
\end{equation}

Writing
\begin{align*}
\begin{tikzpicture}[baseline=-2.6pt,description/.style={fill=white,inner sep=2pt}]
\matrix (m) [matrix of math nodes, row sep=0em, text height=1.5ex, column sep=2em, text depth=0.25ex, ampersand replacement=\&]
{\mathcal O_{Z_k} (U) \& \mathcal O_{Z_k} (U \cap V) \& \mathcal O_{Z_k} (V) \\[1em]
\mathbb C [z, u] \& \eqmakebox[Czu]{$\mathbb C [z^\pm, u]$} \& \mathbb C [\zeta, v] \\
z \& \eqmakebox[Czu]{$z$\hair, \hfill $z^{-1}$} \& \zeta \\
u \& \eqmakebox[Czu]{$u$\hair, \hfill $z^k u$}  \& v \\};
\path[->,line width=.6pt,font=\scriptsize]
(m-1-1) edge node[above=-.4ex] {$\phi_0$} (m-1-2)
(m-1-3) edge node[above=-.4ex] {$\psi_0$} (m-1-2)
(m-2-1) edge (m-2-2)
(m-2-3) edge (m-2-2)
;
\path[|->,line width=.6pt]
(m-3-3) edge (m-3-2)
(m-3-1) edge (m-3-2)
(m-4-3) edge (m-4-2)
(m-4-1) edge (m-4-2)
;
\end{tikzpicture}
\end{align*}
the coordinate change (\ref{coordinatechangeepsilon}) is defined on linear monomials $\zeta, v$ and can be generalized to an algebra homomorphism defined on arbitrary monomials $\zeta^m v^n$ by
\[
\psi = \psi_0 + \epsilon \, \psi_1 + \epsilon^2 \psi_2 + \dotsb
\]
where $\psi \colon \zeta^m v^n \tikzmapsto (z^{-1})^m \big( z^k u + \epsilon \sum_{i=1}^{k-1} t_i z^i \big)^n$. Expanding this in powers of $\epsilon$ gives the expressions for $\psi_i$; the first terms are given by the linear maps
\begin{align*}
\psi_0 \colon \zeta^m v^n &\tikzmapsto z^{nk-m} u^n = (z^{-1})^m (z^k u)^n \\
\psi_1 \colon \zeta^m v^n &\tikzmapsto \sum_{i=1}^{k-1} n \, t_i \, z^{(n-1)k-m+i} u^{n-1} \\
\psi_2 \colon \zeta^m v^n &\tikzmapsto \sum_{i,j=1}^{k-1} \tfrac{n(n-1)}2 \, t_i t_j \, z^{(n-2)k-m+i+j} u^{n-2}.
\end{align*}
Note that $\psi_1$ is precisely the $1$-cocycle $\sum_{i=1}^{k-1} t_i z^{-k+i} \frac{\partial}{\partial u}$ applied to $\psi_0 (\zeta^m v^n) = z^{nk-m} u^n$. In other words, the $1$-cocycle representing $\theta$ is precisely the first-order term of a deformation of the restriction morphism of the sheaf $\mathcal O_{Z_k}$.

A ``classical'' (commutative) deformation of the diagram $\mathcal O_{Z_k} \vert_{\mathfrak U}$ is thus given by the diagram $\mathcal O_{Z_k} \llrr{\epsilon} \vert_{\mathfrak U}$ with undeformed multiplications, but deformed morphisms
\begin{align*}
\phi &= \phi_0 \\
\psi &= \psi_0 + \epsilon \psi_1 + \epsilon^2 \psi_2 + \dotsb
\end{align*}

Collecting powers of $\epsilon$, the fact that $\psi = \psi_0 + \epsilon \psi_1 + \dotsb$ is a morphisms is equivalent to
\begin{equation}
\label{obstructionmorphism}
\psi_n (\blank \cdot \blank) = \sum_{i+j = n} \psi_i (\blank) \cdot \psi_j (\blank).
\end{equation}

\subsection{Simultaneous deformations}
\label{simultaneousdeformations}

In \S\S \ref{deformationquantization}--\ref{classicaldeformations} we saw that cohomology classes $\eta \in \H^0 (Z_k, \extprod^2 \mathcal T_{Z_k})$ and $\theta \in \H^1 (Z_k, \mathcal T_{Z_k})$ give rise to deformation quantizations and classical (commutative) deformations of $\mathcal O_{Z_k} \vert_{\mathfrak U}$.

A simple calculation gives
\[
\HH^3 (Z_k) \simeq \H^1 (Z_k, \extprod^2 \mathcal T_{Z_k}) \simeq
\begin{cases}
0 & 1 \leq k \leq 3 \\
\mathbb C^{k-3} & k \geq 4
\end{cases}
\]
so that there is an obstruction space to simultaneous deformations of $\mathcal O_{Z_k} \vert_{\mathfrak U}$ for $k \geq 4$. We now show that the continuation of a general $2$-cocycle representing $\eta \oplus \theta \in \H^0 (Z_k, \extprod^2 \mathcal T_{Z_k}) \oplus \H^1 (Z_k, \mathcal T_{Z_k}) \simeq \HH^2 (Z_k)$ to higher orders may define a non-zero obstruction class in $\HH^3 (Z_k) \simeq \H^1 (Z_k, \extprod^2 \mathcal T_{Z_k})$, even if $\eta$ and $\theta$ can individually be continued to all orders. This obstruction class can be computed from the Maurer--Cartan equation (\ref{maurercartanzk2}) for the Gerstenhaber--Schack L$_\infty$ algebra $\mathfrak{gs} (\mathcal O_{Z_k} \vert_{\mathfrak U})$ to which we will turn to next.

For simplicity we choose the $1$-cocycle $\theta = t_i z^{-k+i} \partial_u$ (rather than a sum over $1 \leq i \leq k-1$) and the Poisson structure given by the global bivector field $(z u \, \partial_z \mkern1mu {\wedge} \mkern1.5mu \partial_u, -\zeta v \, \partial_\zeta \mkern1mu {\wedge} \mkern1.5mu \partial_v)$. (Note that this Poisson structure is quantizable for the open immersions $U, V \subset Z_k$ for any $k \geq 1$, see Proposition \ref{quantizable}.)

Denoting by $\nu_0$ and $\xi_0$ the (commutative) multiplications of $\mathcal O_{Z_k} (V) \simeq \mathbb C [\zeta, v]$ and $\mathcal O_{Z_k} (U \cap V) \simeq \mathbb C [z^\pm, u]$, respectively, we check the obstruction on two arbitrary monomials $\zeta^a v^b, \zeta^c v^d \in \mathbb C [\zeta, v] = \mathcal O_{Z_k} (V)$.

\paragraph{First order.} 

The obstruction in $\epsilon$ of the form
\[
\{ \psi_0 (\blank), \psi_0 (\blank) \} + \psi_1 (\blank) \psi_0 (\blank) + \psi_0 (\blank) \psi_1 (\blank) - \psi_0 (\{ \blank {,} \blank \}) - \psi_1 (\blank \cdot \blank)
\]
is easily checked to vanish, see (\ref{obstructionmultiplication}) and (\ref{obstructionmorphism}). Here we have written ${}\cdot{}$ for $\nu_0$ on $V$ and $\{ \blank {,} \blank \}$ for both $\nu_1$ and $\xi_1$.

\paragraph{Second order.} 

Similarly, the vanishing of the obstruction in $\epsilon^2$ amounts to
\begin{align*}
\psi_2 (\blank \cdot \blank) + \psi_0 (\nu_2 (\blank {,} \blank)) + \psi_1 (\{ \blank {,} \blank \}) &= \psi_0 (\blank) \psi_2 (\blank) + \psi_2 (\blank) \psi_0 (\blank) \\
&\quad + \psi_1 (\blank) \psi_1 (\blank) + \xi_2 (\psi_0(\blank), \psi_0 (\blank)) \\
&\quad + \{ \psi_0 (\blank), \psi_1 (\blank) \} + \{ \psi_1 (\blank), \psi_0 (\blank) \}.
\intertext{When the multiplication, respectively the morphisms, are {\it not} deformed this obstruction simplifies to}
\psi_2 (\blank \cdot \blank) &= \psi_0 (\blank) \psi_2 (\blank) + \psi_2 (\blank) \psi_0 (\blank) + \psi_1 (\blank) \psi_1 (\blank) \\
\intertext{respectively}
\psi_0 (\nu_2 (\blank {,} \blank)) &= \xi_2 (\psi_0 (\blank), \psi_0 (\blank))
\end{align*}
which again hold by (\ref{obstructionmultiplication}) and (\ref{obstructionmorphism}).

It remains to check the obstruction involving higher terms of both $\psi_i$ on the one hand, and $\nu_i, \xi_i$ on the other. This obstruction reads
\begin{align}
\label{obstructionthreeterms}
\psi_1 (\{ \blank {,} \blank \}) &= \{ \psi_0 (\blank), \psi_1 (\blank) \} + \{ \psi_1 (\blank), \psi_0 (\blank) \}.
\end{align}

The individual terms applied to arbitrary monomials $\zeta^a v^b, \zeta^c v^d$ read as follows.
\begin{align*}
\psi_1 (\{ \zeta^a v^b, \zeta^c v^d \}) &= -(ad-bc) \, \psi_1 (\zeta^{a+c} v^{b+d}) \\
&= -(ad-bc) \, (b+d) \, t_i \, z^{(b+d-1)k - (a+c) + i} u^{b+d-1} \\[1.25ex]
\{ \psi_0 (\zeta^a v^b), \psi_1 (\zeta^c v^d) \} &= \xi_1 (z^{bk-a} u^b, d \, t_i \, z^{(d-1)k - c + i} u^{d-1}) \\
&= d ((bk{-}a)(d{-}1) {-} b ((d{-}1)k {-} c {+} i)) \, t_i \, z^{(b+d-1)k - (a+c) + i} u^{b+d-1} \\[1.25ex]
\{ \psi_1 (\zeta^a v^b), \psi_0 (\zeta^c v^d) \} &= \xi_1 (b \, t_i \, z^{(b-1)k - a + i} u^{b-1}, z^{dk-c} u^d) \\
&= b (((b{-}1)k {-} a {+} i) d {-} (b{-}1)(dk{-}c)) \, t_i \, z^{(b+d-1)k - (a+c) + i} u^{b+d-1}
\end{align*}
Dropping the monomials and the factor of $t_i$, the obstruction (\ref{obstructionthreeterms}) amounts to
\[
- (ad-bc)(b+d) = -(ad-bc)(b+d) + ad - bc
\]
{\it i.e.}\ $ad - bc = 0$. Of course, this is not satisfied for all $a,b,c,d \in \mathbb N$ and for $k \geq 4$ and $1 < i < k-1$ the $3$-cocycle
\[
\psi_1 (\nu_1) - \xi_1 (\psi_0 \otimes \psi_1) - \xi_1 (\psi_1 \otimes \psi_0)
\]
involving $\eta$ and $\theta$ thus defines a non-zero class in $\HH^3 (Z_k)\simeq \H^1 (Z_k, \extprod^2 \mathcal T_{Z_k})$. In other words, the simultaneous deformation in a commutative and noncommutative direction of $Z_k$ may be obstructed already at second order.

We summarize our findings in the following proposition.

\begin{proposition}
\label{nosimultaneous}
There exists no simultaneous deformation of the pair
\[
\eta \oplus \theta \in \H^0 (Z_k, \extprod^2 \mathcal T_{Z_k}) \oplus \H^1 (Z_k, \mathcal T_{Z_k}) \simeq \HH^2 (Z_k)
\]
restricting to the purely noncommutative and purely commutative deformations described in \S \ref{deformationquantization} and in \S \ref{classicaldeformations} respectively.
\end{proposition}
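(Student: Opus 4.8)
The plan is to argue by contradiction, showing that the second-order term of the Maurer--Cartan equation for $\mathfrak{gs}(\mathcal O_{Z_k}\vert_{\mathfrak U})$ cannot be solved. By Theorem~\ref{maintheorem} and the obstruction calculus of \S\ref{obstructiontheorylinfinity}, a formal deformation of $\mathcal O_{Z_k}\vert_{\mathfrak U}$ restricting to the prescribed data is exactly a Maurer--Cartan element $\widetilde M[1]\oplus\widetilde\Phi\in\MC(\mathfrak{gs}(\mathcal O_{Z_k}\vert_{\mathfrak U}))$ whose first-order part is fixed: $M_1=\mu_1\oplus\nu_1\oplus\xi_1$ is the triple of Poisson brackets attached to $\eta$ as in (\ref{mu1})--(\ref{nu1}), while $\Phi_1=0\oplus\psi_1$ records the first-order part of the deformed restriction morphism attached to $\theta$ (with $\phi=\phi_0$ undeformed on the $U$-side), as in \S\ref{classicaldeformations}. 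So I would assume such a Maurer--Cartan element exists and examine the order-$\epsilon^2$ component of (\ref{maurercartanzk2}), equivalently the relation $\sum_{i+j=n}\Phi_i\circ M_j=\sum_{i+j+k=n}M_i\circ(\Phi_j\otimes\Phi_k)$ at $n=2$ on the $V$-chart, where both the noncommutative and the classical data interact.

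First I would isolate the genuine obstruction. Collecting the order-$\epsilon^2$ terms of the $V$-component, the unknown second-order data $\nu_2,\xi_2,\psi_2$ enters precisely through $d_{\mathrm{GS}}$, while the remaining terms are quadratic in the fixed first-order data. Among the latter, the contributions built only from the deformed multiplications and $\psi_0$ cancel by the quantization compatibility (\ref{obstructionmultiplication}), valid because $\eta$ is quantizable (Theorem~\ref{quantizable}); the contributions built only from the deformed morphism and $\xi_0$ cancel by the homomorphism relation (\ref{obstructionmorphism}), valid because $\psi=\psi_0+\epsilon\psi_1+\dotsb$ is a genuine algebra map. What survives is the mixed cross-term (\ref{obstructionthreeterms}), so the obstruction to second order is represented by the $3$-cocycle $\psi_1(\nu_1)-\xi_1(\psi_0\otimes\psi_1)-\xi_1(\psi_1\otimes\psi_0)$, which is (up to sign) the order-$\epsilon^2$ part of the binary bracket $\lmb\widetilde M[1],\widetilde\Phi\rmb$.

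Next I would evaluate this $3$-cocycle on a pair of arbitrary monomials $\zeta^a v^b,\zeta^c v^d\in\mathbb C[\zeta,v]=\mathcal O_{Z_k}(V)$, using the explicit expressions for $\nu_1$, $\xi_1$, $\psi_0$ and $\psi_1$ recorded in \S\ref{classicaldeformations}. Each of the three terms yields the same monomial $z^{(b+d-1)k-(a+c)+i}u^{b+d-1}$ scaled by $t_i$, and after cancelling this common factor the three scalars combine to $-(ad-bc)$. Since this does not vanish for generic exponents, the cross-term is a non-zero Gerstenhaber--Schack cocycle; this is a direct calculation requiring only the Leibniz/chain-rule bookkeeping already set up above.

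The hard part will be the last step: passing from ``non-zero cochain'' to ``non-zero cohomology class'', since the obstruction to extending the deformation is really the class of this cocycle in $\HH^3(Z_k)$, and only a class that fails to vanish genuinely prevents a second-order extension. I would establish this through the Hochschild--Kostant--Rosenberg isomorphism $\HH^3(Z_k)\simeq\H^1(Z_k,\extprod^2\mathcal T_{Z_k})\simeq\mathbb C^{k-3}$: the cross-term lands in the $q=1$, $\extprod^2\mathcal T_{Z_k}$ summand and represents, up to scale, the Schouten--Nijenhuis bracket of the bivector $\eta$ with the $1$-cocycle $\theta=t_i z^{-k+i}\partial_u$. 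Reading off this class from the explicit Čech description of $\H^1(Z_k,\extprod^2\mathcal T_{Z_k})$ (the same description underlying Lemma~\ref{poissonzk}), one checks that for $k\geq 4$ and $1<i<k-1$ it is a non-zero generator, the range of $i$ being exactly what keeps the relevant powers of $z$ inside the cohomology. Since the obstruction class is non-zero, the order-$\epsilon^2$ Maurer--Cartan equation has no solution, no Maurer--Cartan element extending $\eta\oplus\theta$ exists, and the proposition follows.
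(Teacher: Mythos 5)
Your proposal follows essentially the same route as the paper: fix the first-order data $M_1$ and $\Phi_1$, observe that the pure-multiplication and pure-morphism parts of the order-$\epsilon^2$ Maurer--Cartan equation cancel by (\ref{obstructionmultiplication}) and (\ref{obstructionmorphism}), reduce to the mixed cross-term (\ref{obstructionthreeterms}), and evaluate it on monomials $\zeta^a v^b, \zeta^c v^d$ to get the discrepancy $\pm(ad-bc)$. Your extra care in promoting the non-zero cocycle to a non-zero class in $\HH^3(Z_k)\simeq\H^1(Z_k,\extprod^2\mathcal T_{Z_k})$ (via the identification with the Schouten--Nijenhuis bracket $[\eta,\theta]$ and the range $k\geq 4$, $1<i<k-1$) is a point the paper asserts rather than proves, but it is the same claim, not a different argument.
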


We end with a conceptual explanation of this observation. The (cubic) Maurer--Cartan equation (\ref{maurercartanzk2}) for $\mathfrak{gs} (\mathbb A)$ gives rise to an obstruction map $\operatorname{obs} \colon \mathrm C_{\mathrm{GS}}^2 (\mathbb A) \tikzto \mathrm C_{\mathrm{GS}}^3 (\mathbb A)$ such that the preimage of $0 \in \mathrm C_{\mathrm{GS}}^3 (\mathbb A)$ under $\operatorname{obs}$ are precisely solutions to the Maurer--Cartan equation. The solution space $M = \operatorname{obs}^{-1} (0) = \MC (\mathfrak{gs} (\mathbb A))$ can thus be viewed as a variety in $\mathrm C_{\mathrm{GS}}^2 (\mathbb A)$ cut out by polynomial equations. Under the decomposition $\mathrm C_{\mathrm{GS}}^2 (\mathbb A) = \mathrm C^{0,2} \oplus \mathrm C^{1,1}$, purely noncommutative and purely commutative deformations can be thought of as curves $C_1$ and $C_2$ in $M$ whose tangent vectors at $\mathcal O_{Z_k} \vert_{\mathfrak U}$ are $\eta$ and $\theta$, respectively. These curves lie in the intersection of $M$ with the coordinate hyperplanes $\mathrm C^{0,2}$, respectively $\mathrm C^{1,1}$, whereas Proposition \ref{nosimultaneous} shows there cannot exist a curve in $M$ with tangent $\eta \oplus \theta$ projecting to $C_1 \subset \mathrm C^{0,2}$ and $C_2 \subset \mathrm C^{1,1}$.

The question of the existence of formal simultaneous deformations of $\mathcal O_Z \vert_{\mathfrak U}$, cut out by the (cubic) Maurer--Cartan equation (\ref{maurercartanzk2}), shall be addressed in future work.

\subsection*{Acknowledgements}

The first named author learned much of the deformation theory from Wendy Lowen as well as Pieter Belmans and Hoang Dinh Van during a visit to Antwerp and it is a great pleasure to thank them all. The first named author also thanks the Studienstiftung des deutschen Volkes for support. A large part of this work was carried out at the Max Planck Institute for Mathematics in Bonn and we are grateful for the excellent resources and working conditions there.

\appendix

\section{Hochschild cohomology and simplicial presheaf cohomology}
\label{hochschildsimplicial}

References for this appendix are Dinh Van--Lowen \cite{dinhvanlowen} and Gerstenhaber--Schack \cite{gerstenhaberschack}.

\subsection{Hochschild cohomology of algebras}

\begin{definition}
\label{definitionhochschild}
Let $S$ be a $\mathbb k$-algebra and let $M$ be an $S$-bimodule. The {\it Hochschild complex} $\mathrm{CH}^\hdot (S, M)$ is defined as
\[
\mathrm{CH}^q (S, M) = \Hom_{\mathbb k} (S^{\otimes q}, M)
\]
with the {\it Hochschild differential} $d_{\mathrm H} \colon \mathrm{CH}^q (S, M) \tikzto \mathrm{CH}^{q+1} (S, M)$ given by
\begin{align*}
d_{\mathrm H} (x) (s_0, \dotsc, s_q) &= s_0 \cdot x (s_1, \dotsc, s_q) \\
& \quad {} + \sum_{i=1}^q (-1)^i x (s_0, \dotsc, s_{i-1} s_i, \dotsc, s_q) \\
& \quad {} + (-1)^{q+1} x (s_0, \dotsc, s_{q-1}) \cdot s_q.
\end{align*}
\end{definition}

\subsection{Simplicial cohomology of presheaves}

Let $\mathfrak U$ be a small category and denote by $\mathrm N = \mathrm N (\mathfrak U)$ the simplicial nerve of $\mathfrak U$. A $p$-simplex $\sigma \in \mathrm N_p$ is a string of $p$ composable morphisms in $\mathfrak U$, which we write
\[
\sigma = ( U_0 \toarg{\phi_1} U_1 \toarg{\phi_2} \dotsb \toarg{\phi_{p-1}} U_{p-1} \toarg{\phi_p} U_p ).
\]
The simplicial structure of $\mathrm N$ gives face maps
\begin{align}
\begin{tikzpicture}[baseline=-2.6pt,description/.style={fill=white,inner sep=2pt}]
\matrix (m) [matrix of math nodes, row sep=0em, text height=1.5ex, column sep=0em, text depth=0.25ex, ampersand replacement=\&, column 3/.style={anchor=base west}, column 1/.style={anchor=base east}]
{\partial_i \colon {} \&[-.6em] \mathrm N_{p+1} \&[2em] \mathrm N_p \\
\& \sigma \& \partial_i \sigma \\};
\path[->,line width=.6pt,font=\scriptsize]
(m-1-2) edge (m-1-3)
;
\path[|->,line width=.6pt]
(m-2-2) edge (m-2-3)
;
\end{tikzpicture}
\end{align}
where
\begin{flalign*}
&&
\begin{tikzpicture}[baseline=-2.6pt,description/.style={fill=white,inner sep=2pt}]
\matrix (m) [matrix of math nodes, row sep=.75em, text height=1.5ex, column sep=1em, text depth=0.25ex, ampersand replacement=\&, column 1/.style={anchor=base east}, column 2/.style={anchor=base east}, column 3/.style={anchor=base east}, column 9/.style={anchor=base west}, column 10/.style={anchor=base west}, outer sep=0, inner sep=2.2pt]
{
\partial_0     \sigma = {} \&[-1.4em]           \& \mathllap{\big(} U_1 \& \dotsb \& U_{i-1} \&
U_i \& U_{i+1} \& \dotsb \& U_p \& U_{p+1} \big) \\
\partial_i     \sigma = {} \&[-1.4em] \big( U_0 \&                  U_1 \& \dotsb \& U_{i-1} \&
    \& U_{i+1} \& \dotsb \& U_p \& U_{p+1} \big) \\
\partial_{p+1} \sigma = {} \&[-1.4em] \big( U_0 \&                  U_1 \& \dotsb \& U_{i-1} \&
U_i \& U_{i+1} \& \dotsb \& U_p \mathrlap{\big).} \\
};
\path[-stealth,line width=.6pt,font=\scriptsize]
(m-1-3) edge (m-1-4)
(m-1-4) edge (m-1-5)
(m-1-5) edge (m-1-6)
(m-1-6) edge (m-1-7)
(m-1-7) edge (m-1-8)
(m-1-8) edge (m-1-9)
(m-1-9) edge (m-1-10)
(m-2-2) edge (m-2-3)
(m-2-3) edge (m-2-4)
(m-2-4) edge (m-2-5)
(m-2-5) edge (m-2-7)
(m-2-7) edge (m-2-8)
(m-2-8) edge (m-2-9)
(m-2-9) edge (m-2-10)
(m-3-2) edge (m-3-3)
(m-3-3) edge (m-3-4)
(m-3-4) edge (m-3-5)
(m-3-5) edge (m-3-6)
(m-3-6) edge (m-3-7)
(m-3-7) edge (m-3-8)
(m-3-8) edge (m-3-9)
;
\end{tikzpicture}
&& 1 \leq i \leq p
\end{flalign*}

\begin{definition}
\label{definitionsimplicial}
Let $\mathcal F$ and $\mathcal E$ be presheaves of $\mathbb k$-modules over $\mathfrak U$ and define the {\it simplicial presheaf complex} by
\[
\mathrm C^p (\mathcal E, \mathcal F) = \prod_{U_0 \smallto \dotsb \smallto U_p} \Hom (\mathcal E (U_p), \mathcal F (U_0)).
\]
The {\it simplicial differential} is defined as
\[
d_\Delta = \sum_{i=0}^{p+1} (-1)^i d_i
\]
where $d_i$ is obtained from $\partial_i$ as follows.

An element $x \in \mathrm C^p (\mathcal E, \mathcal F)$ is a tuple $x = (x^\tau)_{\tau \in \mathrm N_p}$ of homomorphisms $x^\tau \colon \mathcal E (U_p) \tikzto \mathcal F (U_0)$ for each $\tau = (U_0 \tikzto \dotsb \tikzto U_p)$. We define $d_i x = (d_i x^\sigma)_{\sigma \in \mathrm N_{p+1}}$ where for each $\sigma = (U_0 \toarg{\phi_1} \dotsb \toarg{\phi_{p+1}} U_{p+1})$
\begin{flalign*}
&& d_0     x^\sigma &= \mathcal F (\phi_1) \circ x^{\partial_0 \sigma} && \\
&& d_i     x^\sigma &= x^{\partial_i \sigma} && \mathllap{1 \leq i \leq p} \\
&& d_{p+1} x^\sigma &= x^{\partial_{p+1} \sigma} \circ \mathcal E (\phi_{p+1}) &&
\end{flalign*}
so that each $d_i x^\sigma \colon \mathcal E (U_{p+1}) \tikzto \mathcal F (U_0)$, whence
\[
d_\Delta \colon \prod_{U_0 \smallto \dotsb \smallto U_p} \Hom (\mathcal E (U_p), \mathcal F (U_0)) \tikzto \prod_{U_0 \smallto \dotsb \smallto U_{p+1}} \Hom (\mathcal E (U_{p+1}), \mathcal F (U_0)).
\]
\end{definition}

\subsection{Differentials in the Gerstenhaber--Schack double complex}

In Definition \ref{definitiongerstenhaberschack} the Gerstenhaber--Schack complex was defined as the total complex of the first quadrant double complex
\begin{align*}
\mathrm C^{p,q} (\mathcal F) :={} &\mathrm C^p (\mathcal F^{\otimes q}, \mathcal F) \\
={} &\prod_{U_0 \smallto \dotsb \smallto U_p} \Hom (\mathcal F (U_p)^{\otimes q}, \mathcal F (U_0)).
\end{align*}
The (vertical) {\it Hochschild differential}
\[
d_{\mathrm H} \colon \prod_{U_0 \smallto \dotsb \smallto U_p} \Hom (\mathcal F (U_p)^{\otimes q}, \mathcal F (U_0)) \tikzto \prod_{U_0 \smallto \dotsb \smallto U_p} \Hom (\mathcal F (U_p)^{\otimes q+1}, \mathcal F (U_0))
\]
is now given on each component, {\it i.e.}\ for each $p$-simplex
\[
U_0 \toarg{\phi_1} U_1 \toarg{\phi_2} \dotsb \toarg{\phi_{p-1}} U_{p-1} \toarg{\phi_p} U_p,
\]
as in Definition \ref{definitionhochschild} by regarding $\mathcal F (U_0)$ as an $\mathcal F (U_p)$-bimodule with left and right actions given by left- respectively right-multiplication in $\mathcal F (U_0)$
\begin{flalign*}
&& \begin{aligned}
s' \cdot s\phantom{'}  &= \mathcal F (\phi_p \circ \dotsb \circ \phi_1) (s') s \\
s  \cdot s' &= s (\mathcal F (\phi_p) \circ \dotsb \circ \mathcal F (\phi_1)) (s')
\end{aligned}
&& \quad\mathllap{s' \in \mathcal F (U_p), s \in \mathcal F (U_0).}
\end{flalign*}
The (horizontal) {\it simplicial differential}
\[
d_\Delta \colon \mathrm C^{p,q} (\mathcal F) \tikzto \mathrm C^{p+1,q} (\mathcal F)
\]
is given by taking $\mathcal E = \mathcal F^{\otimes q}$ in Definition \ref{definitionsimplicial}.

\end{document}